\title{PFH spectral invariants and $C^\infty$ closing lemmas}
\author{Oliver Edtmair and Michael Hutchings\footnote{Partially supported by NSF grant DMS-2005437.}}
\date{}
\newcommand{\mc}[1]{{\mathcal #1}}
\numberwithin{equation}{section}
\newtheorem{theorem}{Theorem}[section]
\newtheorem{proposition}[theorem]{Proposition}
\newtheorem{corollary}[theorem]{Corollary}
\newtheorem{lemma}[theorem]{Lemma}
\newtheorem{lemma-definition}[theorem]{Lemma-Definition}
\theoremstyle{definition}
\newtheorem{definition}[theorem]{Definition}
\newtheorem{choice}[theorem]{Choice}
\newtheorem{remark}[theorem]{Remark}
\newtheorem{example}[theorem]{Example}
\newtheorem{notation}[theorem]{Notation}
\newcommand{\floor}[1]{\left\lfloor #1 \right\rfloor}
\renewcommand{\frak}{\mathfrak}
\newcommand{\C}{{\mathbb C}}
\newcommand{\Q}{{\mathbb Q}}
\newcommand{\R}{{\mathbb R}}
\newcommand{\Z}{{\mathbb Z}}
\newcommand{\F}{{\mathbb F}}
\newcommand{\op}{\operatorname}
\newcommand{\M}{\mc{M}}
\newcommand{\Ker}{\op{Ker}}
\newcommand{\tensor}{\otimes}
\newcommand{\union}{\bigcup}
\newcommand{\bpm}{\begin{pmatrix}}
\newcommand{\epm}{\end{pmatrix}}
\renewcommand{\epsilon}{\varepsilon}
\begin{document}

\maketitle

\begin{abstract}
We develop the theory of spectral invariants in periodic Floer homology (PFH) of area-preserving surface diffeomorphisms. We use this theory to prove $C^\infty$ closing lemmas for certain Hamiltonian isotopy classes of area-preserving surface diffeomorphisms. In particular, we show that for a $C^\infty$-generic area-preserving diffeomorphism of the torus, the set of periodic points is dense. Our closing lemmas are quantitative, asserting roughly speaking that for a given Hamiltonian isotopy, within time $\delta$ a periodic orbit must appear of period $O(\delta^{-1})$. We also prove a ``Weyl law'' describing the asymptotic behavior of PFH spectral invariants.
\end{abstract}

\setcounter{tocdepth}{2}

\section{Introduction}
\label{sec:intro}

Throughout this paper, let $\Sigma$ be a closed connected surface of genus $g$, and let $\omega$ be a symplectic (area) form on $\Sigma$. We are interested in (orientation-preserving) area-preserving diffeomorphisms $\phi:(\Sigma,\omega)\to(\Sigma,\omega)$. We are also interested in Hamiltonian isotopy classes in the set of all area-preserving diffeomorphisms of $(\Sigma,\omega)$; we denote the Hamiltonian isotopy class of $\phi$ by $[\phi]$.

\subsection{Closing lemmas}
\label{sec:cl}

Our convention is that a {\em periodic orbit\/} of $\phi$ with period $k$ is a set of $k$ distinct points in $\Sigma$ that are cyclically permuted by $\phi$. A {\em periodic point\/} is a point in a periodic orbit.

\begin{definition}
Let $\Phi$ be a Hamiltonian isotopy class of area-preserving diffeomorphisms of $(\Sigma,\omega)$.
We say that $\Phi$ satisfies the {\em $C^\infty$ generic density property\/} if for a $C^\infty$-generic area-preserving diffeomorphism $\phi\in\Phi$, the set of periodic points of $\phi$ is dense in $\Sigma$.
\end{definition}

It was proved by Asaoka-Irie \cite{ai} that the Hamiltonian isotopy class of the identity satisfies the $C^\infty$ generic density property. It is natural to ask which other Hamiltonian isotopy classes satisfy this property.

\begin{remark}
The $C^\infty$ generic density property fails for some Hamiltonian isotopy classes of area-preserving diffeomorphisms of $T^2$. In fact, it follows from a result of Herman \cite[Annexe, Thm.\ 2.2]{herman} that if $\phi$ is a Diophantine rotation of $T^2$, then there is a neighborhood of $\phi$ in the $C^\infty$ topology in the space of maps Hamiltonian isotopic to $\phi$ such that any map $\phi'$ in this neighborhood is smoothly conjugate to $\phi$, and hence has no periodic orbits. (Thanks to V. Humili\`ere for this reference.)
\end{remark}

One approach to proving the $C^\infty$ generic density property is to create periodic orbits through a given region by local perturbations:

\begin{definition}
Let $\Phi$ be a Hamiltonian isotopy class of area-preserving diffeomorphisms of $(\Sigma,\omega)$. We say that $\Phi$ satisfies the {\em $C^\infty$ closing property\/} if for every map $\phi\in\Phi$ and for every nonempty open set $\mc{U}\subset \Sigma$, there exists a $C^\infty$-small Hamiltonian isotopy supported in $\mc{U}$ from $\phi$ to $\phi'$ such that $\phi'$ has a periodic orbit intersecting $\mc{U}$.
\end{definition}

Standard arguments, see e.g.\ \cite[\S3]{irie1}, show:

\begin{lemma}
If the Hamiltonian isotopy class $\Phi$ satisfies the $C^\infty$ closing property, then it satisfies the $C^\infty$ generic density property.
\end{lemma}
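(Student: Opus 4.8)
The plan is the standard Baire category argument, as in \cite[\S3]{irie1}. Fix a countable basis $\{B_n\}_{n\in\N}$ for the topology of $\Sigma$ consisting of nonempty open sets, and for each $n$ let $\mathcal{G}_n\subset\Phi$ be the set of $\phi$ possessing a periodic point in $B_n$. Then $\bigcap_n\mathcal{G}_n$ is precisely the set of $\phi\in\Phi$ whose periodic points are dense. Since $\Phi$, with the $C^\infty$ topology, is a Baire space (it is a closed subset of the completely metrizable group of area-preserving diffeomorphisms of $(\Sigma,\omega)$), it suffices to show that each $\mathcal{G}_n$ contains a dense open subset of $\Phi$.

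To get such a subset I would pass to nondegenerate orbits. For $k\in\N$, let $\mathcal{G}_{n,k}\subset\Phi$ be the set of $\phi$ having a nondegenerate periodic point of period at most $k$ lying in $B_n$, where ``nondegenerate'' means that $1$ is not an eigenvalue of the linearized return map. Each $\mathcal{G}_{n,k}$ is open: by the implicit function theorem a nondegenerate periodic point persists under $C^1$-small perturbations, remains nondegenerate, and remains in the open set $B_n$. Also $\bigcup_k\mathcal{G}_{n,k}\subseteq\mathcal{G}_n$. So the whole content is to show that $\bigcup_k\mathcal{G}_{n,k}$ is dense in $\Phi$.

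For density, take $\phi\in\Phi$ and a $C^\infty$-neighborhood $\mathcal{N}$ of $\phi$. The $C^\infty$ closing property gives $\phi''\in\mathcal{N}\cap\Phi$ with a periodic orbit $\mathcal{O}$, of some period $k$, that meets $B_n$; fix a point $x\in\mathcal{O}\cap B_n$. Choose a ball $B$ around $x$ small enough that $B\subset B_n$ and $B$ is disjoint from the finitely many points $\phi''(x),\dots,(\phi'')^{k-1}(x)$, and let $\psi$ be a $C^\infty$-small Hamiltonian diffeomorphism supported in $B$ with $\psi(x)=x$ and $d\psi_x$ an arbitrary symplectic matrix close to the identity (for instance the cut-off time-one flow of a small quadratic Hamiltonian in Darboux coordinates centered at $x$). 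Put $\phi'=\psi\circ\phi''$. Since $\psi$ is the identity outside $B$, the $\phi'$-orbit of $x$ coincides with $\mathcal{O}$, so $x$ is still periodic of period $k$, and by the chain rule the linearized return map of $\phi'$ at $x$ equals $d\psi_x$ times that of $\phi''$; choosing $d\psi_x$ appropriately makes it nondegenerate. Thus $\phi'\in\mathcal{G}_{n,k}$; for $\psi$ sufficiently small $\phi'\in\mathcal{N}$; and $\phi'$ is Hamiltonian isotopic to $\phi$, so $\phi'\in\Phi$. Hence $\bigcup_k\mathcal{G}_{n,k}$ is dense, and $\bigcap_n\bigcup_k\mathcal{G}_{n,k}$ is a residual subset of $\Phi$ on which periodic points are dense.

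The one point I would write out carefully, and the only mild obstacle, is the construction of $\psi$ in the last step: one must check that it can simultaneously be made $C^\infty$-small, be supported in an arbitrarily small ball about $x$, and realize a prescribed linearization $d\psi_x$. This is fine because the ball is fixed first (its radius depends only on $\phi''$ and $\mathcal{O}$), and only afterwards is the generating Hamiltonian taken small; the $C^\infty$ norm of a cut-off quadratic Hamiltonian on a fixed ball is bounded linearly by the size of its quadratic part. Everything else is routine Baire-category bookkeeping.
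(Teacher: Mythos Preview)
Your argument is correct and is exactly the standard Baire category argument the paper has in mind; the paper does not give its own proof but simply refers to \cite[\S3]{irie1}, which is the same scheme you wrote out (countable basis, pass to nondegenerate orbits to get openness, use the closing property plus a local linearization perturbation for density).
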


One can now ask which Hamitonian isotopy classes satisfy the $C^\infty$ closing property. One of our main results is the following:

\begin{theorem}
\label{thm:closing}
(proved in \S\ref{sec:proofs})
Let $\Phi$ be a Hamiltonian isotopy class of area-preserving diffeomorphisms of $(\Sigma,\omega)$. Suppose that $\Phi$ is rational (Definition~\ref{def:rational}) and satisfies the $U$-cycle property (Definition~\ref{def:UCP}). Then $\Phi$ satisfies the $C^\infty$ closing property.
\end{theorem}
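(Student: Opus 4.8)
The plan is to argue by contradiction, following the strategy of Irie \cite{irie1} for Reeb flows, transported to the PFH setting. Suppose $\Phi$ is rational and has the $U$-cycle property but fails the $C^\infty$ closing property: there are $\phi\in\Phi$ and a nonempty open set $\mathcal{U}\subset\Sigma$ such that no $C^\infty$-small Hamiltonian isotopy supported in $\mathcal{U}$ carries $\phi$ to a map with a periodic orbit meeting $\mathcal{U}$. Fix a smooth $H\colon\Sigma\to[0,\infty)$ with support in a small closed ball $B\subset\mathcal{U}$ and with $\int_\Sigma H\,\omega>0$, scaled $C^\infty$-small, and let $\phi_s\eqdef\psi_s\circ\phi$, where $\{\psi_s\}_{s\in[0,1]}$ is the Hamiltonian flow of $H$ with $\psi_0=\operatorname{id}$. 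Then $\{\phi_s\}$ is a $C^\infty$-small Hamiltonian isotopy supported in $\mathcal{U}$, so by hypothesis every periodic orbit of every $\phi_s$, $s\in[0,1]$, is disjoint from $\mathcal{U}$, hence from $\operatorname{supp}H$.

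Next I would invoke the PFH spectral invariants constructed in the preceding sections: rationality of $\Phi$ makes them defined, and the $U$-cycle property supplies, for all large degrees $d$, a distinguished nonzero PFH class whose spectral invariant $c_d(\psi)$ (for $\psi\in\Phi$) I use. I will need three facts established there: (i) \emph{spectrality} — $c_d(\phi_s)$ equals the PFH action, computed with respect to a fixed reference cycle $\Gamma$, of some PFH generator of $\phi_s$; (ii) \emph{continuity} — $s\mapsto c_d(\phi_s)$ is continuous, a consequence of the Hofer-type estimate for $c_d$; and (iii) the \emph{Weyl law}, identifying $\lim_{d\to\infty}c_d(\psi)/d$ with a Calabi-type invariant of $\psi$ which, by a direct computation using $H\geq 0$ and $\int_\Sigma H\,\omega>0$ (and the monotonicity of the $c_d$ under nonnegative Hamiltonians), is strictly larger for $\phi_1$ than for $\phi_0=\phi$.

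The crux is a rigidity claim: for each fixed $d$, the function $s\mapsto c_d(\phi_s)$ is constant on $[0,1]$. Choose $\Gamma$ to avoid $B$. The isotopy identifies all the mapping tori $Y_{\phi_s}$ with $Y_{\phi_0}$, and under this identification the closed $2$-form defining the PFH action changes only by an exact form supported over $B$. Since the underlying orbit set of any PFH generator of $\phi_s$ is disjoint from $B$, as is $\Gamma$, the PFH action of every such generator is unchanged; hence, by spectrality, $c_d(\phi_s)$ takes values in the PFH action spectrum of $\phi_0$. That spectrum is discrete — after first replacing $\phi_s$ by a nondegenerate perturbation supported away from $B$, a routine step that does not affect the conclusion — so the continuous function $s\mapsto c_d(\phi_s)$ is locally constant, hence constant, and in particular $c_d(\phi_1)=c_d(\phi_0)$ for every $d$. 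Dividing by $d$ and letting $d\to\infty$, the Weyl law forces $\lim_d c_d(\phi_1)/d=\lim_d c_d(\phi_0)/d$, contradicting property (iii). This proves the theorem.

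I expect the rigidity step to be the main obstacle. One must verify that, once all periodic orbits avoid $\mathcal{U}$, the PFH chain complexes in every large degree — together with the $U$-map tower underlying the definition of $c_d$ — are genuinely unchanged by the perturbation, and that the action filtration changes at worst by a global shift that the choice of $\Gamma$ eliminates; this entails careful bookkeeping of the nondegenerate perturbations near $\mathcal{U}$ and of the reference data. The other substantial ingredient, the Weyl law and its sensitivity to the Calabi-type invariant, is a headline result proved separately in the paper and is used here as a black box.
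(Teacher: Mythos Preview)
Your approach is the Irie-style argument that the paper explicitly mentions as motivation in \S\ref{sec:motivation} but does \emph{not} use for the actual proof. The paper instead proves Theorem~\ref{thm:closing} via the spectral-gap mechanism of \S\ref{sec:gap}: the $U$-cycle property gives $\liminf_{d\to\infty}\op{gap}_d(\phi)=0$ by Lemma~\ref{lem:gapbound}, and then Proposition~\ref{prop:gap} (whose Step~2 is essentially your rigidity argument, but run for a single class rather than a sequence) produces the periodic orbit. This route avoids the Weyl law entirely and, as the paper emphasizes, yields the quantitative statements of \S\ref{sec:qcl}.

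There is a genuine gap in your version. The Weyl law you invoke as a black box is Theorem~\ref{thm:weyl}, and its hypothesis is not just the $U$-cycle property of Definition~\ref{def:UCP}: it requires a \emph{uniform} bound $m$ on the orders of the $U$-cyclic classes $\sigma_i$. Definition~\ref{def:UCP} only guarantees the existence of $U$-cyclic elements in arbitrarily large degree, with no control on their orders; and in the proof of Theorem~\ref{thm:weyl} the bound $m_i\le m$ is what makes $Am_in_i/d(\gamma_i)$ and $c^{\op{ECH}}_{k_i}(X)/d(\gamma_i)$ converge. The paper itself flags this: the Weyl law ``implies closing lemmas as in Theorem~\ref{thm:closing} (under slightly stronger hypotheses on the $U$-cycles)''. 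So your argument, as written, proves only a weaker theorem. The spectral-gap route sidesteps this because Lemma~\ref{lem:gapbound} needs only a single $U$-cyclic element in degree $d$, of any order.

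A secondary point: your rigidity step proposes to obtain discreteness of the action spectrum by first making a nondegenerate perturbation supported away from $B$. This is more delicate than necessary. The paper's version (Step~2 of the proof of Proposition~\ref{prop:gap}) uses rationality directly: by Proposition~\ref{prop:representation} the spectral invariant always lies in the action spectrum, and rationality forces that spectrum to have measure zero, so continuity gives constancy without any nondegeneracy maneuver.
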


To explain the rationality hypothesis, we need to introduce a key actor in the story, the {\em mapping torus\/} of $\phi$. This is a three-manifold defined by
\begin{equation}
\label{eqn:mappingtorus}
Y_\phi = [0,1]\times\Sigma/\sim,\quad\quad\quad (1,x)\sim (0,\phi(x)).
\end{equation}
The mapping torus is a fiber bundle over $S^1=\R/\Z$ with fiber $\Sigma$. If $t$ denotes the $[0,1]$ coordinate on $[0,1]\times\Sigma$, then the vector field $\partial_t$ on $[0,1]\times\Sigma$ descends to a vector field on $Y_\phi$, which we also denote by $\partial_t$. Periodic orbits of the map $\phi$ of period $d$ correspond to simple periodic orbits of the vector field $\partial_t$ whose projection to $S^1$ has degree $d$. Since the map $\phi$ preserves the symplectic form $\omega$ on $\Sigma$, this form induces a fiberwise symplectic form $\omega$ on $Y_\phi$. The latter extends to a closed $2$-form $\omega_\phi$ on $Y_\phi$, characterized by $\omega_\phi(\partial_t,\cdot)=0$. 

We need to consider how the cohomology class $[\omega_\phi]\in H^2(Y_\phi;\R)$ depends on $\phi$.
Let $\{\phi_s\}_{s\in[0,1]}$ be a smooth isotopy of area-preserving diffeomorphisms of $(\Sigma,\omega)$, and suppose for simplicity that $\phi_s$ is constant for $s$ close to $0$ or $1$. (See \S\ref{sec:invariance} for a more general formalism for Hamiltonian isotopies.) We then obtain a diffeomorphism of mapping tori
\[
f:Y_{\phi_0}\stackrel{\simeq}{\longrightarrow} Y_{\phi_1}.
\]
This is induced by the diffeomorphism of $[0,1]\times\Sigma$ sending
\begin{equation}
\label{eqn:deff}
(t,x) \longmapsto (t,\phi_t^{-1}(\phi_0(x))).
\end{equation}
If the isotopy $\{\phi_s\}$ is Hamiltonian, then $f^*[\omega_{\phi_1}] = [\omega_{\phi_0}]\in H^2(Y_{\phi_0};\R)$.

\begin{definition}
\label{def:rational}
The Hamiltonian isotopy class $\Phi$ is {\em rational\/} if for $\phi\in\Phi$, the cohomology class $[\omega_\phi]\in H^2(Y_\phi;\R)$ is a real multiple of a class in the image of $H^2(Y_\phi;\Z)$.
\end{definition}

\begin{example}
\label{ex:rationaltorus}
Suppose that $\Sigma=T^2=\R^2/\Z^2$ and $\omega$ is the restriction of the standard symplectic form on $\R^2$. Any orientation-preserving diffeomorphism of $T^2$ is isotopic to the diffeomorphism induced by a linear map $A\in\op{SL}_2\Z$; see e.g.\ the introduction to \cite{cb}. It follows that any area-preserving diffeomorphism is Hamiltonian isotopic to a map of the form $\phi(x)=Ax+b$ where $A\in\op{SL}_2\Z$ and $b\in\R^2/\Z^2$. A computation using the Mayer-Vietoris sequence shows that there is a short exact sequence\footnote{Our convention is that the homology of a topological space is taken with $\Z$ coefficients by default unless otherwise stated.}
\[
0 \longrightarrow H_2(\Sigma) \longrightarrow H_2(Y_\phi) \stackrel{h}{\longrightarrow} \op{Ker}(A-I: \Z^2\circlearrowleft) \longrightarrow 0,
\]
where the first arrow is induced by inclusion of the fiber $\{0\}\times\Sigma$, and the map $h$ is given by the homology class in $H_1(T^2)$ of the intersection with $\{0\}\times\Sigma$. If $Z\in H_2(Y_\phi)$, then we have $\int_Z\omega_\phi \equiv \omega(h(Z),b) \mod \Z$. Thus $[\phi]$ is rational if and only if $\omega(v,b)\in\Q$ whenever $v\in\op{Ker}(A-I: \Z^2\circlearrowleft)$.

In particular, if $A=I$, that is if $\phi$ is smoothly isotopic to the identity, then $[\phi]$ is rational if and only if $b\in\Q^2/\Z^2$, namely $\phi$ is a rational rotation. If $A-I$ has rank one, then $\phi$ is isotopic to a power of a Dehn twist, and $[\phi]$ is rational when $\omega(v,b)\in\Q$ where $v$ is a generator of $\op{Ker}(A-I: \Z^2\circlearrowleft)$. In all other cases, $\phi$ is finite order or Anosov and $b_2(Y_\phi)=1$, so $[\phi]$ is automatically rational.
\end{example}

The $U$-cycle property is too technical to explain in the introduction, but we will see in Example~\ref{ex:US3} and Lemma~\ref{lem:Ucycle} below that every rational Hamiltonian isotopy class on $S^2$ or $T^2$ satisfies this property. Thus Theorem~\ref{thm:closing} implies:

\begin{corollary}
\label{cor:torus}
Let $\Phi$ be a Hamiltonian isotopy class of area-preserving diffeomorphisms of $S^2$ or $T^2$. If $\Phi$ is rational, then $\Phi$ satisfies the $C^\infty$ closing property.
\end{corollary}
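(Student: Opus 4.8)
The plan is to deduce this directly from Theorem~\ref{thm:closing}. Since $\Phi$ is assumed rational, it suffices to verify that every rational Hamiltonian isotopy class on $S^2$ or $T^2$ satisfies the $U$-cycle property (Definition~\ref{def:UCP}); once that is established, the $C^\infty$ closing property, and hence the corollary, follows formally. I would organize the verification according to the surface.

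For $S^2$, the group of area-preserving diffeomorphisms is connected and $H^1(S^2;\R)=0$, so every area-preserving diffeomorphism of $S^2$ is Hamiltonian isotopic to the identity. Thus there is essentially a single Hamiltonian isotopy class, it is manifestly rational, and the problem reduces to one computation for the mapping torus $Y_{\op{id}}\cong S^1\times S^2$. This is the content of Example~\ref{ex:US3}: one identifies the relevant (twisted) PFH of $S^1\times S^2$ in the appropriate classes — e.g.\ via the isomorphism between PFH and Seiberg--Witten Floer cohomology and the known computation for $S^1\times S^2$ with its $\Spinc$ structures and local coefficients — and exhibits a distinguished class on which iterates of the $U$-map remain nonzero, which is exactly what the $U$-cycle property requires.

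For $T^2$, I would begin with the classification recalled in Example~\ref{ex:rationaltorus}: up to Hamiltonian isotopy, $\phi(x)=Ax+b$ with $A\in\op{SL}_2\Z$, and rationality of $[\phi]$ constrains $b$ according to whether $A=I$, $\op{Ker}(A-I)$ is one-dimensional, or $\op{Ker}(A-I)=0$. In each case one must understand the PFH of $Y_\phi$ as a module over the $U$-map, in the classes dual to $[\omega_\phi]$, and locate a $U$-cycle. When $A$ has no eigenvalue $1$ one has $b_2(Y_\phi)=1$, so after rescaling $[\omega_\phi]$ is already integral and the analysis is most direct; the cases $A=I$ (rational rotations of $T^2$) and $A$ conjugate to a power of a Dehn twist carry a nontrivial $H_2(Y_\phi)$ and must be handled separately. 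This is Lemma~\ref{lem:Ucycle}.

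The main obstacle is precisely these PFH computations underlying Example~\ref{ex:US3} and Lemma~\ref{lem:Ucycle}: one needs sufficient control of periodic Floer homology of mapping tori over $S^2$ and over $T^2$ — presumably through the Seiberg--Witten correspondence together with the known monopole Floer homology of $S^1\times S^2$ and of torus bundles — to confirm that the $U$-map acts in the manner demanded by the $U$-cycle property. With that in place, Theorem~\ref{thm:closing} applies in every case and the corollary follows.
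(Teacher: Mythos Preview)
Your high-level reduction is exactly what the paper does: Corollary~\ref{cor:torus} follows formally from Theorem~\ref{thm:closing} once the $U$-cycle property is verified, and the paper points to Example~\ref{ex:US3} for $S^2$ and Lemma~\ref{lem:Ucycle} for $T^2$.

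Where your plan diverges is in the proposed verification. For $S^2$, the paper does not go through Seiberg--Witten at all: Examples~\ref{ex:identity}, \ref{ex:US2}, and \ref{ex:US3} compute $HP$ and the $U$-map for a small Hamiltonian perturbation of the identity directly, by Morse-theoretic holomorphic curve counts, and read off that $U^{d+1}e_{d,i}=q^{-[S^2]}e_{d,i}$. For $T^2$, the paper's proof of Lemma~\ref{lem:Ucycle} does \emph{not} split into the three cases of Example~\ref{ex:rationaltorus}. Instead it runs a single uniform argument: rationality yields monotone classes $\Gamma$ with $d(\Gamma)$ arbitrarily large; Lee--Taubes \cite[Cor.\ 1.5]{lt} identifies the untwisted $\overline{HP}(\phi,\Gamma)$ with the ``bar'' Seiberg--Witten Floer cohomology, on which $U$ is automatically an isomorphism; the Kronheimer--Mrowka computations give nonvanishing with graded pieces of rank at most $2$; hence some power of $U^{d-g+1}$ is the identity on each graded piece, and Lemma~\ref{lem:twisted} transports this to $U$-cyclic elements in the Novikov theory. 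Your case-by-case plan would also work in principle, but it requires separately controlling each torus bundle, whereas the paper's argument needs only the qualitative input that $\overline{HP}$ is nonzero and of bounded rank.

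One small caution: the $U$-cycle property is stronger than ``iterates of $U$ remain nonzero''; it is the genuinely periodic relation $U^{m(d-g+1)}\sigma=q^{-m[\Sigma]}\sigma$, and it is this periodicity (not mere nonvanishing) that feeds into Lemma~\ref{lem:gapbound} and hence into the closing argument.
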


Note that the unique Hamiltonian isotopy class of area-preserving diffeomorphisms of $S^2$ is rational. Any non-rational area-preserving diffeomorphism of $T^2$ can be perturbed to a rational one by a $C^\infty$-small (non-Hamiltonian) isotopy. Then as in \cite[Cor.\ 1.2]{ai}, we obtain:

\begin{corollary}
\label{cor:density}
For a $C^\infty$-generic area-preserving diffeomorphism of $T^2$, the set of periodic points is dense in $T^2$.
\end{corollary}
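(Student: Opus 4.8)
The plan is to deduce this from Corollary~\ref{cor:torus} by a Baire category argument in the spirit of Asaoka--Irie \cite[Cor.\ 1.2]{ai}; essentially all of the real work is already contained in Corollary~\ref{cor:torus}, and the remaining steps are soft. Write $\mathcal{D}$ for the space of area-preserving diffeomorphisms of $T^2$ with the $C^\infty$ topology, which is a Baire space, and fix a countable basis $\{B_i\}_{i\in\N}$ of nonempty open subsets of $T^2$. A diffeomorphism has dense periodic points precisely when it has a periodic point in each $B_i$. Since having a periodic point in a given open set is not an open condition (a degenerate periodic point may be destroyed), I would instead work, for each $i$ and each integer $m\ge 1$, with the set $\mathcal{N}_{i,m}\subset\mathcal{D}$ of maps admitting a \emph{nondegenerate} periodic point of period at most $m$ lying in $B_i$. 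Because a nondegenerate periodic point persists under $C^\infty$-small perturbations and stays nondegenerate and inside $B_i$, each $\mathcal{N}_{i,m}$ is open in $\mathcal{D}$, and $\bigcup_{m\ge 1}\mathcal{N}_{i,m}$ consists of maps with a periodic point in $B_i$.

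The key claim is that $\bigcup_{m\ge 1}\mathcal{N}_{i,m}$ is $C^\infty$-dense in $\mathcal{D}$ for every $i$. To prove it, fix $\phi\in\mathcal{D}$ and $\eps>0$. First, by the remark preceding this corollary together with Example~\ref{ex:rationaltorus}, an arbitrarily $C^\infty$-small (possibly non-Hamiltonian) isotopy deforms $\phi$ to a map $\phi_1$ lying in a \emph{rational} Hamiltonian isotopy class $\Phi$; take this isotopy small enough that $\phi_1$ is $\eps/2$-close to $\phi$. Next, by Corollary~\ref{cor:torus} the class $\Phi$ has the $C^\infty$ closing property, so applying it to $\phi_1$ and the open set $\mc{U}=B_i$ yields an $\eps/4$-small Hamiltonian isotopy supported in $B_i$ from $\phi_1$ to some $\phi_2$ possessing a periodic orbit that meets $B_i$. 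Finally, a standard $\eps/4$-small area-preserving perturbation supported near this orbit makes the periodic point in $B_i$ nondegenerate without pushing it out of $B_i$; the resulting map lies in $\mathcal{N}_{i,m}$, where $m$ is the period of the orbit, and is $\eps$-close to $\phi$.

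With the claim in hand, $\bigcup_{m\ge 1}\mathcal{N}_{i,m}$ is a dense open subset of the Baire space $\mathcal{D}$ for each $i$, hence $\bigcap_{i\in\N}\bigcup_{m\ge 1}\mathcal{N}_{i,m}$ is residual in $\mathcal{D}$, and every map in it has dense periodic points; this is precisely the assertion that a $C^\infty$-generic area-preserving diffeomorphism of $T^2$ has dense periodic points. (Note that one cannot simply quote the $C^\infty$ generic density property for each rational class separately, since a single class is meager in $\mathcal{D}$; this is why one runs the Baire argument directly in $\mathcal{D}$, exploiting that rational classes are $C^\infty$-dense.) The one point requiring a little care — and the place I would expect to spend any effort at all — is the final perturbation to nondegeneracy: it must stay within the group of area-preserving maps and must not move the relevant periodic point out of $B_i$, both of which are handled by a local area-preserving (e.g.\ generating-function or Hamiltonian) perturbation of $\phi_2^{\,m}$ near a point of the orbit, exactly as in \cite{ai}. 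I do not anticipate any genuine obstacle beyond Theorem~\ref{thm:closing} itself.
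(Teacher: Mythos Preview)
Your proposal is correct and follows essentially the same route as the paper: perturb to a rational Hamiltonian isotopy class, invoke Corollary~\ref{cor:torus} to create a periodic orbit in a given basic open set, stabilize it by a further small perturbation to nondegeneracy, and conclude by Baire category. The paper compresses all of this into the single phrase ``as in \cite[Cor.\ 1.2]{ai}'', so your write-up simply unpacks that reference; there is no substantive difference in strategy.
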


\begin{remark}
\label{rem:update1}
After the initial version of this paper was completed, it was shown in \cite{ucyclic} that every rational Hamiltonian isotopy class on a surface of any genus satisfies the $U$-cycle property (see Remark~\ref{rem:upgrade}). Thus the $U$-cycle hypothesis in Theorem~\ref{thm:closing} is redundant, and Corollary~\ref{cor:torus} also holds for surfaces of higher genus.
\end{remark}

\begin{remark}
\label{rem:update0}
It was shown in \cite{rohil}, which appeared simultaneously with the initial version of this paper, that Corollary~\ref{cor:density} holds for surfaces of higher genus. The argument in \cite{rohil} also shows that every rational Hamiltonian isotopy class satisfies the $C^\infty$ generic density property.
\end{remark}

\subsection{Quantitative closing lemmas}
\label{sec:qcl}

Going beyond the $C^\infty$ closing property, our methods also yield {\em quantitative\/} closing lemmas, asserting roughly that during a given Hamiltonian isotopy, within time $\delta$ a periodic orbit must appear with period $O(\delta^{-1})$. We now give some examples of precise quantitative statements that we can prove.

\begin{definition}
\label{def:ual}
Let $\mc{U}\subset \Sigma$ be a nonempty open set, let $l\in(0,1)$, and let $a\in(0,\op{area}(\mc{U}))$. A {\em $(\mc{U},a,l)$-admissible Hamiltonian\/} is a smooth function $H:[0,1]\times\Sigma\to\R$ such that:
\begin{itemize}
\item $H(t,x)=0$ for $t$ close to $0$ or $1$.
\item $H(t,x)=0$ for $x\notin\mc{U}$.
\item $H\ge 0$.
\item There is an interval $I\subset(0,1)$ of length $l$ and a disk $D\subset\mc{U}$ of area $a$ such that $H\ge 1$ on $I\times D$.
\end{itemize}
\end{definition}

Given any Hamiltonian $H:[0,1]\times\Sigma\to\R$ satisfying the first bullet point above, let $\{\varphi_t\}_{t\in[0,1]}$ denote the associated Hamiltonian isotopy (see \S\ref{sec:invariance} for conventions), and given an area-preserving diffeomorphism $\phi$ of $\Sigma$, write $\phi_H=\phi\circ\varphi_1$.

\begin{theorem}
\label{thm:quant0}
(proved in \S\ref{sec:proofs})
Let $\phi$ be an area-preserving diffeomorphism of $(S^2,\omega)$, write $A=\int_{S^2}\omega$, let $\mc{U}\subset S^2$ be a nonempty open set, and let $H$ be a $(\mc{U},a,l)$-admissible Hamiltonian. If $0<\delta\le al^{-1}$, then for some $\tau\in[0,\delta]$, the map $\phi_{\tau H}$ has a periodic orbit intersecting $\mc{U}$ with period $d$ satisfying
\begin{equation}
\label{eqn:quant0}
d\le \floor{Al^{-1}\delta^{-1}}.
\end{equation}
\end{theorem}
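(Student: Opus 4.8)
The plan is to deduce Theorem~\ref{thm:quant0} from the theory of PFH spectral invariants, following the standard ``Weyl law plus Hamiltonian perturbation'' strategy. The key object is the spectral invariant $c_\sigma$ attached to a PFH class $\sigma$ of degree $d$ on the mapping torus $Y_{\phi_{\tau H}}$; for $S^2$ one takes the classes along a $U$-cycle, and the main input is (i) monotonicity/continuity of $\tau\mapsto c_\sigma(\phi_{\tau H})$ in $\tau$, with a quantitative lower bound for the rate of increase coming from the fact that $H\ge 1$ on $I\times D$, and (ii) a spectrality property: $c_\sigma(\phi)$ is the $\omega_\phi$-area (PFH action) of an actual broken/honest orbit set representing $\sigma$, hence is realized by genuine periodic orbits. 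The Weyl law (the asymptotic result advertised in the abstract) controls how $c_\sigma$ grows with $d$: roughly $c_{\sigma_d}(\phi)\approx d\cdot A$ up to lower-order terms, where $A=\int_{S^2}\omega$.

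Concretely, I would proceed as follows. First, fix the open set $\mc U$ and the admissible Hamiltonian $H$, and consider the family of maps $\phi_{\tau H}$ for $\tau\in[0,\delta]$. For each degree $d$ choose an appropriate PFH class $\sigma_d$ (a $U$-cycle generator), and study $f_d(\tau)=c_{\sigma_d}(\phi_{\tau H})$. Using the Hamiltonian deformation formula for PFH spectral invariants, $f_d$ is Lipschitz in $\tau$, and because $H$ is supported in $\mc U$, is nonnegative, and is $\ge 1$ on $I\times D$, one gets a lower bound of the form $f_d(\delta)-f_d(0)\ge (\text{something like } l\cdot d_D\delta)$ whenever the relevant orbit set passes through $D$ with enough multiplicity — or, contrapositively, if no periodic orbit of $\phi_{\tau H}$ through $\mc U$ with period $\le d$ exists for all $\tau\in[0,\delta]$, then $f_d$ increases at the slow ``background'' rate only, giving $f_d(\delta)-f_d(0)\le a l^{-1}\cdot(\text{period bound})$ type estimate. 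Second, combine this with the Weyl law: $c_{\sigma_d}(\phi_{\tau H})$ is pinned between $d\cdot A$-type asymptotics regardless of $\tau$ (since $[\omega_{\phi_{\tau H}}]$ is unchanged along a Hamiltonian isotopy), so the total variation of $f_d$ over $[0,\delta]$ cannot exceed a bound independent of $d$ beyond the $dA$ term. Choosing $d=\floor{Al^{-1}\delta^{-1}}$ makes the forced increase from the admissible bump exceed what the Weyl law allows unless a periodic orbit of period $\le d$ through $\mc U$ appears for some $\tau\in[0,\delta]$; the hypothesis $\delta\le al^{-1}$ ensures this choice of $d$ is $\ge 1$ and that the arithmetic closes up. Spectrality then converts the contradiction into the existence of the desired genuine periodic orbit.

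The main obstacle I anticipate is making the ``quantitative monotonicity'' step precise: one needs that a Hamiltonian bump of height $1$ on $I\times D$ forces the spectral invariant to increase by an amount that can be estimated \emph{from below} in terms of $l$, $a$, and the period $d$ of whatever orbit set computes $c_{\sigma_d}$ — and crucially that this lower bound degrades in a controlled, linear way with $d$ so that it can be played off against the Weyl asymptotics. This requires a careful analysis of how the PFH action functional changes under the isotopy, using that the orbit set representing the spectral invariant either meets $I\times D$ (gaining action proportional to its intersection number there, which is bounded by $d$) or else has period small enough to be the sought periodic orbit. A secondary technical point is the passage from $S^2$'s sphere PFH (where one works with $U$-cycles and must track the grading/the choice of $\sigma_d$ carefully) to honest orbits via spectrality, and verifying the floor function in \eqref{eqn:quant0} comes out exactly rather than with an extra additive constant; I would handle this by optimizing the choice of $d$ at the end and absorbing error terms using the inequality $\delta\le al^{-1}$.
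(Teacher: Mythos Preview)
Your proposal assembles the right ingredients (PFH spectral invariants, $U$-cyclic classes on $S^2$, spectrality, continuity under Hamiltonian isotopy) but the mechanism you sketch is not the one that actually closes the argument, and the Weyl law is too blunt an instrument to produce the sharp bound $\lfloor A l^{-1}\delta^{-1}\rfloor$. Concretely: if no periodic orbit of $\phi_{\tau H}$ of period $\le d$ meets $\mc{U}$ for any $\tau\in[0,\delta]$, then spectrality plus continuity force the corrected function $\tau\mapsto c_\sigma(\phi,\gamma,\tau H)+\tau\int_\gamma H\,dt$ to be \emph{exactly constant} on $[0,\delta]$, not merely ``slowly varying''. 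So comparing $f_d(\delta)$ to $f_d(0)$ for a \emph{fixed} class $\sigma$ yields zero and gives you nothing to contradict. The Weyl law, being an asymptotic statement as $d\to\infty$ (and itself proved via the ball-packing lemma), cannot supply a sharp finite-$d$ lower bound on this same quantity to play against zero.

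The missing idea is that the contradiction comes from comparing $c_\sigma$ to $c_{U\sigma}$ --- two \emph{different} classes --- via a ball-packing estimate. The Hamiltonian bump on $I\times D$ creates room in the symplectic cobordism $M$ between the graphs of $0$ and $\delta H$ to embed a ball $B(\min(\delta l,a))$; a holomorphic-curve argument (Lemma~\ref{lem:key}) then gives
\[
c_{U\sigma}(\phi,\gamma) \le c_\sigma(\phi,\gamma,\delta H) + \delta\int_\gamma H\,dt - \min(\delta l,a).
\]
Combined with the constancy above, this yields $c_\sigma(\phi,\gamma)-c_{U\sigma}(\phi,\gamma)\ge \min(\delta l,a)$, i.e.\ a lower bound on the spectral gap $\op{gap}_d(\phi)$. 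On the other hand, the $U$-cyclic property on $S^2$ plus a telescoping sum gives $\op{gap}_d(\phi)\le A/(d+1)$. Choosing $d=\lfloor A l^{-1}\delta^{-1}\rfloor$ makes $A/(d+1)<\delta l\le a$, and the two inequalities collide. So the ``forced increase from the bump'' is felt not through the variation of a single $c_\sigma$ in $\tau$, but through the $U$ map and a symplectic ball embedding in the cobordism; this is the step your outline does not contain.
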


\begin{remark}
When $\delta = al^{-1}$ and $Aa^{-1}\notin\Z$, the bound \eqref{eqn:quant0} is sharp! That is, under the hypotheses of Theorem~\ref{thm:quant0}, one cannot prove the existence of a period orbit intersecting $\mc{U}$ with period less than $\floor{Aa^{-1}}$. The reason is that if $a=\op{area}(\mc{U})-\epsilon$ for $\epsilon>0$ sufficiently small, then the open sets $\phi^i(\mc{U})$ for $0\le i < \floor{Aa^{-1}}$ have total area less than $A$ and thus could be disjoint.
\end{remark}

We also obtain a slightly weaker inequality for rational area-preserving diffeomoprhisms of the torus:

\begin{theorem}
\label{thm:quant1}
(proved in \S\ref{sec:proofs})
Let $\phi$ be an area-preserving diffeomorphism of $(T^2,\omega)$, and write $A=\int_{T^2}\omega$. Suppose that the Hamiltonian isotopy class $[\phi]$ is rational, let $\Omega\in H^2(Y_\phi;\Z)$ be an integral cohomology class such that $[\omega_\phi]$ is a positive multiple of the image of $\Omega$, and let $d_0=\langle \Omega,[T^2]\rangle$. Let $\mc{U}\subset\Sigma$ be a nonempty open set, and let $H$ be a $(\mc{U},a,l)$-admissible Hamiltonian. If $0<\delta\le al^{-1}$, then for some $\tau\in[0,\delta]$, the map $\phi_{\tau H}$ has a periodic orbit intersecting $\mc{U}$ with period $d$ satisfying
\[
d\le d_0\left(\floor{Ad_0^{-1}l^{-1}\delta^{-1}} + 1\right).
\]
\end{theorem}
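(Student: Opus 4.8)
The plan is to mimic the argument used for Theorem~\ref{thm:quant0} on $S^2$, working now with the mapping torus $Y_\phi$ and the PFH spectral invariants attached to the $U$-cyclic classes, which are available because $[\phi]$ is rational and $T^2$ satisfies the $U$-cycle property (Lemma~\ref{lem:Ucycle}). The admissible PFH degrees here are the positive multiples of $d_0=\langle\Omega,[T^2]\rangle$: for such a degree $d$ and an area-preserving diffeomorphism $\psi$ with $[\psi]=[\phi]$, write $c_d(\psi)$ for the corresponding spectral invariant of $Y_\psi$. I will use the properties established in the earlier sections: $c_d$ is \emph{spectral}, i.e.\ it equals the PFH action of some orbit set $\mathcal{C}$ of degree $d$, whose underlying embedded $\partial_t$-orbits are periodic orbits of $\psi$ of period equal to their degree; along the isotopy $\{\phi_{\tau H}\}_{\tau\in[0,\delta]}$ — which satisfies $[\phi_{\tau H}]=[\phi]$, since $\tau H$ is a Hamiltonian — the function $\tau\mapsto c_d(\phi_{\tau H})$ is Lipschitz and \emph{nondecreasing} (because $H\ge 0$), with a.e.\ derivative equal to the integral of $H$ along an orbit set $\mathcal{C}_\tau$ realizing $c_d(\phi_{\tau H})$, suitably normalized; and $c_d$ obeys the PFH Weyl law, $c_{kd_0}(\psi)/(kd_0)\to\theta(\psi)$ as $k\to\infty$, where $\theta(\psi)$ depends only on $(\Sigma,\omega)$ and a Calabi-type normalization of $\psi$.

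First I would fix $d:=d_0\bigl(\floor{Ad_0^{-1}l^{-1}\delta^{-1}}+1\bigr)$ and argue by contradiction, assuming that for every $\tau\in[0,\delta]$ the map $\phi_{\tau H}$ has no periodic orbit meeting $\mc{U}$ of period $\le d$. Every orbit of a realizing set $\mathcal{C}_\tau$ has degree $\le d$, hence underlying period $\le d$, hence (by assumption) is disjoint from $\mc{U}$; since the Hamiltonian flow of $H$ preserves $\mc{U}$ and $H$ vanishes outside $\mc{U}$, it follows that $\mathcal{C}_\tau$ is disjoint from $\op{supp}(H)$ and the integral of $H$ along $\mathcal{C}_\tau$ vanishes. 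Therefore $\frac{d}{d\tau}c_d(\phi_{\tau H})=0$ for a.e.\ $\tau$, and so $c_d(\phi_{\delta H})=c_d(\phi)$.

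Next I would derive a contradiction from the Weyl law. The Calabi-type normalization strictly increases along $\{\phi_{\tau H}\}$: admissibility of $H$ gives $\int_0^1\!\int_\Sigma H\,\omega\,dt\ge al$, and, using $H\ge 0$ together with $\delta\le al^{-1}$, this forces $\theta(\phi_{\delta H})-\theta(\phi)\ge\delta al/A>0$ (up to the precise normalization in the Weyl law). The remaining, quantitative point is to convert this asymptotic gap into a strict inequality at the \emph{specific finite degree} $d$: because $d-d_0=d_0\floor{Ad_0^{-1}l^{-1}\delta^{-1}}\le Al^{-1}\delta^{-1}$, the effective form of the Weyl law (equivalently, quantitative equidistribution of the spectral orbit sets), combined with the monotonicity of $c_\bullet$ in $H\ge 0$ and the $d_0$-periodicity of the $U$-cyclic structure, should yield $c_d(\phi_{\delta H})>c_d(\phi)$. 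This contradicts the previous paragraph, so the conclusion of the theorem holds with this $d$. (On $S^2$ one has $d_0=1$ and the PFH of $S^1\times S^2$ is simple enough that the same scheme dispenses with the extra ``$+1$'', which is why Theorem~\ref{thm:quant0} has the slightly sharper bound.)

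The hard part will be the quantitative input of the third paragraph: promoting the asymptotic Weyl law to the strict inequality $c_d(\phi_{\delta H})>c_d(\phi)$ at a degree as small as $d_0\bigl(\floor{Ad_0^{-1}l^{-1}\delta^{-1}}+1\bigr)$, and checking that it is exactly the hypothesis $\delta\le al^{-1}$ that makes the Calabi-type gain large enough to overcome the finite-degree error, together with the bookkeeping needed to pass between the degree of an orbit in $\mathcal{C}_\tau$ and the period of the corresponding periodic orbit in the $d_0$-divisible setting. Once the spectral-invariant package (spectrality, monotonicity, the derivative formula, and the quantitative Weyl law) from the earlier sections is in hand, the freezing argument and the extraction of the periodic orbit are formal, exactly as in the proof of Theorem~\ref{thm:quant0}.
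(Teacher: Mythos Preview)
Your proposal has a genuine gap, and it is precisely where you yourself flag ``the hard part'': you try to derive the contradiction from the (asymptotic) Weyl law, hoping to promote it to a strict inequality $c_d(\phi_{\delta H})>c_d(\phi)$ at the specific finite degree $d=d_0(\lfloor Ad_0^{-1}l^{-1}\delta^{-1}\rfloor+1)$. The Weyl law in this paper (Theorem~\ref{thm:weyl}) is purely asymptotic, with no error term that would let you conclude anything at a single finite degree; the remark after its proof only gives $O(d^{-1/2})$ convergence, which is far too weak to detect a gain of order $\delta al/A$ at degree $d\sim A/(l\delta)$. Your appeal to ``quantitative equidistribution'' and a ``Calabi-type normalization'' is not something established in the paper, and it is not clear how to make it work. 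Likewise, the a.e.\ derivative formula you invoke for $\tau\mapsto c_d(\phi_{\tau H})$ is never proved here; the paper only gives Lipschitz continuity (Corollary~\ref{cor:continuity}) and spectrality (Proposition~\ref{prop:representation}).

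The paper's route is different and avoids the Weyl law entirely. One first observes, by a one-line pigeonhole on a $U$-cycle (Lemma~\ref{lem:gapbound}), that the existence of $U$-cyclic elements in degree $d$ forces the minimal spectral gap to satisfy $\op{gap}_d(\phi)\le A/(d-g+1)$. Independently, the ball-packing inequality (the $k=1$ case of Lemma~\ref{lem:key}) combined with the freezing argument you describe shows (Proposition~\ref{prop:gap}) that if no periodic orbit of period $\le d$ meets $\mc{U}$ along the isotopy up to time $\delta$, then $\op{gap}_d(\phi)\ge\min(\delta l,a)$. These two finite-degree inequalities contradict each other as soon as $A/(d-g+1)<\min(\delta l,a)$; with $g=1$ on $T^2$ and $d$ ranging over positive multiples of $d_0$ (Lemma~\ref{lem:Ucycle}), the smallest such $d$ is exactly $d_0(\lfloor Ad_0^{-1}l^{-1}\delta^{-1}\rfloor+1)$. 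This is packaged as Theorem~\ref{thm:quantitative}, from which Theorem~\ref{thm:quant1} follows immediately. So the quantitative input is not an effective Weyl law but the elementary combination ``$U$-cycle $\Rightarrow$ small spectral gap'' and ``ball packing $\Rightarrow$ large spectral gap under the no-orbit assumption''.
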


Theorems~\ref{thm:quant0} and \ref{thm:quant1} are special cases of a more general statement, Theorem~\ref{thm:quantitative} below, which is also applicable to higher genus surfaces. For some further developments on quantitative closing lemmas, which appeared after the initial version of this paper, see \cite{chaideztanny,edtmair,altspec}.

\subsection{Background and motivation for the proofs}
\label{sec:motivation}

The main precedent for Theorem~\ref{thm:closing} is the proof by Irie \cite{irie1} of the following $C^\infty$ closing lemma for contact forms on closed three-manifolds. See also the survey \cite{humiliere}.

\begin{theorem}[Irie]
\label{thm:irie}
Let $Y$ be a closed three-manifold, let $\lambda$ be a contact form on $Y$, and let $\mc{U}\subset Y$ be a nonempty open set. Then there exists a contact form $\lambda'$ which is $C^\infty$-close to $\lambda$ and agrees with $\lambda$ outside of $\mc{U}$, such that the Reeb vector field associated to $\lambda'$ has a periodic orbit intersecting $\mc{U}$. 
\end{theorem}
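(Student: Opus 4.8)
The plan is to deduce this from embedded contact homology (ECH) and its spectral invariants, following Irie --- this is precisely the template that the body of the present paper transports to periodic Floer homology. Recall that for a closed contact three-manifold $(Y,\xi)$ and a contact form $\lambda$ with $\ker\lambda=\xi$, there is associated to each nonzero $\sigma\in ECH(Y,\xi)$ a real number $c_\sigma(Y,\lambda)$, with the following properties (due, in the contact setting, to Hutchings and to Cristofaro-Gardiner--Hutchings--Ramos). \emph{Continuity and scaling}: $\lambda\mapsto c_\sigma(Y,\lambda)$ is continuous in $C^0$, so $c_\sigma$ extends to degenerate $\lambda$, and $c_\sigma(Y,a\lambda)=a\,c_\sigma(Y,\lambda)$ for $a>0$. \emph{Local constancy}: if $g\colon Y\to(0,\infty)$ is smooth and $\{g\ne 1\}$ is disjoint from every periodic Reeb orbit of $\lambda$ and of $g\lambda$, then $c_\sigma(Y,g\lambda)=c_\sigma(Y,\lambda)$; this reflects the fact that filtered ECH below action $L$ depends only on $\lambda$ near its Reeb orbits of action $\le L$. \emph{Weyl law}: there is a sequence $\sigma_k\in ECH(Y,\xi)$ with
\[
\lim_{k\to\infty}\frac{c_{\sigma_k}(Y,\lambda)^2}{k}=2\op{vol}(Y,\lambda),\qquad \op{vol}(Y,\lambda)=\int_Y\lambda\wedge d\lambda .
\]
Such a sequence exists because $ECH(Y,\xi)$, being isomorphic to Seiberg--Witten Floer cohomology, is infinitely generated.

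Granting these, I argue by contradiction. Fix $H\in C^\infty(Y)$ with $H\ge 0$, $H\not\equiv 0$, and $\op{supp}(H)\subset\mc{U}$, and for $t\ge0$ set $\lambda_t=(1+tH)\lambda$; this is a contact form for $\xi$ which agrees with $\lambda$ outside $\mc{U}$ and is $C^\infty$-close to $\lambda$ once $t\,\|H\|_{C^\infty}$ is small. A direct computation gives $\op{vol}(Y,\lambda_t)=\int_Y(1+tH)^2\,\lambda\wedge d\lambda$, which is \emph{strictly} increasing in $t$. Now suppose the theorem fails; then there is $\epsilon_0>0$ such that for every $t\in[0,\epsilon_0)$ no periodic Reeb orbit of $\lambda_t$ intersects $\mc{U}$. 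For $0\le s\le t<\epsilon_0$ we have $\lambda_s=g\,\lambda_t$ with $g=(1+sH)/(1+tH)$, and $\{g\ne1\}=\{H\ne0\}$ has closure contained in $\mc{U}$, hence is disjoint from all periodic Reeb orbits of both $\lambda_s$ and $\lambda_t$. By local constancy, $c_\sigma(Y,\lambda_s)=c_\sigma(Y,\lambda_t)$; in particular $c_\sigma(Y,\lambda_t)=c_\sigma(Y,\lambda_0)$ for all $t\in[0,\epsilon_0)$ and all $\sigma$. (Nondegeneracy, needed to invoke local constancy in its sharpest form, can be arranged by a harmless auxiliary perturbation together with $C^0$-continuity of $c_\sigma$; I suppress this.)

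To finish, fix any $\epsilon\in(0,\epsilon_0)$. Since $\op{vol}(Y,\lambda_0)<\op{vol}(Y,\lambda_\epsilon)$, the Weyl law gives
\[
2\op{vol}(Y,\lambda_0)=\lim_{k\to\infty}\frac{c_{\sigma_k}(Y,\lambda_0)^2}{k}=\lim_{k\to\infty}\frac{c_{\sigma_k}(Y,\lambda_\epsilon)^2}{k}=2\op{vol}(Y,\lambda_\epsilon),
\]
where the middle equality uses $c_{\sigma_k}(Y,\lambda_0)=c_{\sigma_k}(Y,\lambda_\epsilon)$. This contradicts the strict monotonicity of $\op{vol}(Y,\lambda_t)$. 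Hence the theorem holds, and since $\epsilon_0$ may be taken as small as we like, the perturbed form $\lambda'=\lambda_t$ can be chosen arbitrarily $C^\infty$-close to $\lambda$.

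\textbf{Main obstacle.} The contradiction scheme above is soft. The content lies in the three cited properties of $c_\sigma$, and the deep one is the Weyl law: it is the analytic heart of the argument and, in the ECH setting, is established by Cristofaro-Gardiner--Hutchings--Ramos via the Taubes isomorphism between ECH and Seiberg--Witten Floer cohomology together with a delicate asymptotic analysis. Local constancy and continuity/scaling are comparatively formal consequences of the action filtration and cobordism maps on the ECH chain complex. Carrying out the analogous package with periodic Floer homology in place of ECH --- above all, a PFH Weyl law, and the correct formulation of local constancy in the presence of the twisted coefficients and grading that PFH requires --- is exactly what the rest of this paper develops, and that is where the real work is.
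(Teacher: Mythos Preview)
Your argument is correct and mirrors the paper's own sketch: build a one-parameter family $\lambda_t=(1+tH)\lambda$ supported in $\mc{U}$ with strictly increasing contact volume, argue that if no $\lambda_t$ has a Reeb orbit through $\mc{U}$ then all ECH spectral invariants $c_\sigma(Y,\lambda_t)$ are frozen in $t$, and derive a contradiction from the Weyl law \eqref{eqn:vc}. The only quibble is that your ``local constancy'' is more cleanly justified by the family argument---$c_\sigma$ is continuous in $t$, takes values in the action spectrum, and under the no-orbit hypothesis that spectrum is a fixed measure-zero set, forcing constancy---than as a pairwise statement about filtered ECH depending only on $\lambda$ near its Reeb orbits (the differential does not localize that way); this family phrasing is how both Irie and the paper handle it.
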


The proof uses the {\em embedded contact homology\/} of $(Y,\lambda)$; see \cite{bn} for detailed definitions. If $\lambda$ is nondegenerate, meaning that the periodic orbits of the Reeb vector field are nondegenerate, then the embedded contact homology $ECH(Y,\lambda)$ is the homology of a chain complex generated (over $\Z/2$) by certain finite sets of Reeb orbits with multiplicities, and whose differential counts certain $J$-holomorphic curves in $\R\times Y$ for a suitable almost complex structure $J$. Taubes \cite{taubes} proved that $ECH(Y,\lambda)$ is canonically isomorphic to a version of Seiberg-Witten Floer cohomology of $Y$ as defined by Kronheimer-Mrowka \cite{km}, and in particular depends only on the contact structure $\xi=\op{Ker}(\lambda)$. For any contact form $\lambda$ on $Y$, possibly degenerate, and for any nonzero class $\sigma\in ECH(Y,\xi)$, there is a ``spectral invariant'' $c_\sigma(Y,\lambda)\in\R$, which is the total period of a finite set of Reeb orbits with multiplicities homologically selected by ECH. The spectral invariants, unlike ECH, are highly sensitive to the contact form $\lambda$.

For the proof of Theorem~\ref{thm:irie}, we can assume without loss of generality that $Y$ is connected.
There is then a well-defined map $U:ECH(Y,\xi)\to ECH(Y,\xi)$, which is induced by a chain map counting certain $J$-holomorphic curves that are constrained to pass through a base point in $\R\times Y$. Define a {\em $U$-sequence\/} to be a sequence of nonzero classes $\{\sigma_k\}_{k\ge 1}$ in $ECH(Y,\xi)$ such that $U\sigma_{k+1}=\sigma_k$ for all $k\ge 1$. As explained for example in \cite[Lem.\ A.1]{infinity}, results of Kronheimer-Mrowka \cite{km} on Seiberg-Witten Floer homology imply that $U$-sequences always exist.

The key ingredient now is the following ``Weyl law'' for ECH spectral invariants proved\footnote{This was earlier proved in a special case in \cite{qech}, and later given a different proof by Sun \cite{sun}.} in \cite{vc}. 

\begin{theorem}
\label{thm:vc}
\cite{vc}
Let $Y$ be a closed connected three-manifold, let $\lambda$ be a contact form on $Y$, and let $\{\sigma_k\}_{k\ge 1}$ be a $U$-sequence in $ECH(Y,\xi)$. Then
\begin{equation}
\label{eqn:vc}
\lim_{k\to\infty}\frac{c_{\sigma_k}(Y,\lambda)^2}{k} = 2\op{vol}(Y,\lambda).
\end{equation}
\end{theorem}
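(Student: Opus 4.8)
This is a deep result from the ECH literature, so let me think about how I'd reconstruct the argument.

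The key structural facts: ECH has spectral invariants $c_\sigma$ satisfying subadditivity-type properties, the $U$-map decreases the "ECH index grading" by 2, and crucially ECH is isomorphic to Seiberg-Witten Floer cohomology (Taubes). The Weyl law was proven in \cite{vc} (Cristofaro-Gardiner–Hutchings–Ramos). Let me sketch the approach.

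The strategy I'd take: First, establish the "volume identity" — the asymptotics are really governed by the fact that the ECH index of a generator in the relevant part of the chain complex, together with the action, satisfies an asymptotic relationship. The deep input is the **volume property of ECH capacities / asymptotics of the ECH index**. Specifically, for a $U$-sequence $\{\sigma_k\}$, the grading drops by 2 each time, so $\sigma_k$ lives in ECH-degree roughly $2k$ (up to a constant). The spectral invariant $c_{\sigma_k}$ is the action of some set of Reeb orbits representing a cycle in grading $\sim 2k$. One then needs a relationship between action and ECH index for such cycles. The inequality $c_{\sigma_k}^2 \geq (2 - o(1)) \cdot 2\op{vol}(Y,\lambda) \cdot k$ in one direction, and the reverse in the other.

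So here's my plan. **Step 1**: Reduce to the case where $\lambda$ is nondegenerate, by a limiting argument using the fact that spectral invariants are continuous (in fact Lipschitz) with respect to the contact form in $C^0$, and that both sides of \eqref{eqn:vc} vary continuously. **Step 2**: Prove the *upper bound* $\limsup c_{\sigma_k}^2/k \leq 2\op{vol}$. Here I'd use that for any $\epsilon$, one can find (via an explicit model, e.g. near-constructions / ECH of ellipsoids or a direct estimate) cycles in grading $2k$ with controlled action; more precisely one uses the "volume" comes from summing up — the subleading asymptotics of the ECH index $I \approx \frac{\text{action}^2}{2\op{vol}}$ for "large" ECH generators, which is itself proven via the embedding of ECH capacities and Weyl law for ellipsoids, or directly. **Step 3**: Prove the *lower bound*. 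This is where Taubes–Seiberg–Witten enters: one uses the isomorphism with SWF and an estimate relating the Chern–Simons–Dirac functional to the volume, or alternatively the "asymptotics of ECH capacities" theorem which gives $\lim c_k(X)^2/k = 4\op{vol}(X)$ for Liouville domains, combined with a gluing/exhaustion argument that relates the spectral invariants of $(Y,\lambda)$ to ECH capacities of symplectic fillings or "cobordism" arguments using the closed string–open string comparison.

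**The main obstacle** is Step 3 / the lower bound — controlling $c_{\sigma_k}$ from *below* requires ruling out that the homologically-essential cycle in grading $2k$ is represented by orbit sets of small action. The mechanism in \cite{vc} is an intricate argument: assuming many spectral invariants cluster below the expected value forces, via the $U$-map and the structure of the ECH chain complex, a contradiction with the known total "rank" distribution of ECH coming from Seiberg–Witten theory (which is "large" — the Seiberg–Witten Floer homology is nontrivial in infinitely many gradings with the right density). Concretely, one uses that between grading $2k$ and $2k + 2N$ the $U$-map is an isomorphism on a large subspace, and each $U$ application changes the action by at most the action of the "point constraint" curve, giving a bound; then one exploits asymptotics of the ECH index to convert the grading count into a volume. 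Making this quantitative — the interplay of ECH index asymptotics, the $U$-map action bounds, and a covering/packing argument — is the technical heart, and I would not expect to reproduce it without essentially importing the machinery of \cite{vc} wholesale. Since the paper states this theorem as a cited black box, in our context we simply invoke \cite{vc} directly; the proof above is only a sketch of why it is true.
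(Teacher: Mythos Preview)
You correctly identify that the paper does not prove Theorem~\ref{thm:vc}: it is stated as a cited result from \cite{vc} in the background section, and the paper simply invokes it as a black box. In that sense your ``proof'' --- cite \cite{vc} --- matches the paper exactly.

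That said, your sketch of how the argument in \cite{vc} actually goes is somewhat off the mark, and this is worth flagging since the present paper uses the same technique for its own PFH Weyl law (Theorem~\ref{thm:weyl}). The mechanism in \cite{vc} is not a rank-counting argument in Seiberg--Witten Floer homology or a Chern--Simons--Dirac estimate. Rather, both bounds come from a \emph{ball packing} argument: one symplectically embeds a disjoint union $X$ of balls into a trivial cobordism $[0,\epsilon]\times Y$, and uses the holomorphic-curves axiom for cobordism maps (together with the constraint that curves pass through $k$ marked points, one per ball) to compare $c_{\sigma_k}$ with the ECH capacities $c_k^{\op{ECH}}(X)$. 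Since the latter satisfy $c_k^{\op{ECH}}(X)^2/k \to 4\op{vol}(X)$ by the explicit computation for balls, and since $\op{vol}(X)$ can be made arbitrarily close to $\op{vol}([0,\epsilon]\times Y) = \epsilon\op{vol}(Y,\lambda)$, one extracts the asymptotic \eqref{eqn:vc}. The paper reproduces exactly this strategy in \S\ref{sec:ballpacking}--\S\ref{sec:weyl} for PFH, so if you want to see the argument spelled out in the present context, look at the proof of Theorem~\ref{thm:weyl}.
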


\noindent
Here the {\em contact volume\/} is defined by
\[
\op{vol}(Y,\lambda) = \int_Y\lambda\wedge d\lambda.
\]

To prove Theorem~\ref{thm:irie}, one can define a smooth one-parameter family of contact forms $\{\lambda_t\}$ such that $\lambda_0=\lambda$, outside of $\mc{U}$ we have $\lambda_t=\lambda$, and $\frac{d}{dt}\op{vol}(Y,\lambda_t)> 0$. There must then exist Reeb orbits of $\lambda_t$ passing through $\mc{U}$ for arbitrarily small $t$. Otherwise, there exists $\delta>0$ such that $\lambda_t$ has no Reeb orbit passing through $\mc{U}$ for $t\in[0,\delta]$. One can deduce that each spectral invariant $c_\sigma(Y,\lambda_t)$ is independent of $t\in[0,\delta]$. It then follows from the Weyl law \eqref{eqn:vc} that $\op{vol}(Y,\lambda_t)$ is also independent of $t\in[0,\delta]$, which is a contradiction.

Returning to area-preserving surface diffeomorphisms: Asaoka-Irie proved that a $C^\infty$-generic Hamiltonian diffeomorphism of $(\Sigma,\omega)$ has dense periodic points by starting with a Hamiltonian diffeomorphism $\phi$ and constructing a contact three-manifold with an open book decomposition whose page is $\Sigma$ with a disk removed, and whose monodromy is a slight modification of $\phi$. One can then apply Theorem~\ref{thm:irie} to find a $C^\infty$-small perturbation of the contact form with dense Reeb orbits, and translate this back to a $C^\infty$-small perturbation of $\phi$ with dense periodic orbits.

It is not obvious how to extend the above argument to other Hamiltonian isotopy classes, because there are cohomological obstructions to defining the desired contact form. We will instead work more directly with {\em periodic Floer homology\/} (PFH). This is a theory which is defined analogously to ECH, but using periodic orbits of an area-preserving surface diffeomorphism instead of Reeb orbits of a contact form on a three-manifold. PFH is isomorphic to a version of Seiberg-Witten Floer cohomology of the mapping torus $Y_\phi$, as shown by Lee-Taubes \cite{lt}. Originally, PFH was defined before ECH; see \cite{pfh2,pfh3}. Since then there have been many applications of ECH to dynamics of Reeb vector fields in three dimensions and symplectic embeddings in four dimensions. Applications of PFH have only recently begun to appear, including the spectacular proof by Cristofaro-Gardiner, Humili\`ere, and Seyfaddini \cite{simple} that the group of compactly supported area-preserving homeomorphisms of the disk is not simple, and additional applications of PFH to area-preserving homeomorphisms of the two-sphere \cite{twosphere}.

We will prove a PFH analogue of the ``Weyl law'' \eqref{eqn:vc} in Theorem~\ref{thm:weyl} below, replacing the notion of ``$U$-sequence'' by a notion of ``$U$-cycle''. This Weyl law implies closing lemmas as in Theorem~\ref{thm:closing} (under slightly stronger hypotheses on the $U$-cycles), following Irie's proof of Theorem~\ref{thm:irie}.

\begin{remark}
Cristofaro-Gardiner, Prasad, and Zhang \cite{rohil} have independently proved a related Weyl law in PFH by different methods. This Weyl law (together with a nonvanishing result for Seiberg-Witten Floer cohomology) is used in \cite{rohil} to prove the generic density results described in Remark~\ref{rem:update0}.
\end{remark}

A Weyl law is really much stronger than necessary to detect the creation of periodic orbits. Indeed, a Weyl law implies that during a suitable perturbation, infinitely many spectral invariants change, with certain asymptotics; but to detect the creation of a periodic orbit, it suffices to show that a single spectral invariant changes by any nonzero amount. In \S\ref{sec:gap} we introduce a refinement of Irie's argument which, instead of a Weyl law, uses bounds on ``spectral gaps'' coming from ball packings in symplectic cobordisms, to detect the creation of periodic orbits. This method in fact leads to stronger, quantitative closing lemmas as in \S\ref{sec:qcl} above.

The rest of the paper is organized as follows. In \S\ref{sec:pfh} we review the definition of periodic Floer homology. In \S\ref{sec:invariance} we discuss invariance of PFH under Hamitonian isotopy; here we find it useful to relate a Hamiltonian isotopy to the graph of a function $Y_\phi\to\R$. In \S\ref{sec:spectral} we explain how to define the PFH spectral numbers we will use. In \S\ref{sec:ballpacking} we prove a key lemma which gives relations between PFH spectral invariants of different maps in the same Hamiltonian isotopy class arising from ball packings in symplectic cobordisms between the graphs of different Hamiltonians.  In \S\ref{sec:gap} we use this lemma to show how ``spectral gaps'' in PFH allow one to detect the creation of periodic orbits, with quantitative bounds. In \S\ref{sec:proofs} we use this machinery to prove all of our theorems stated above. Finally, in \S\ref{sec:weyl} we state and prove a Weyl law for PFH spectral invariants.

\paragraph{Acknowledgments.}

We thank the anonymous referees for detailed comments which helped us improve the clarity of the exposition.


\section{Periodic Floer homology}
\label{sec:pfh}

We now set up the version of PFH that we will be using, which one might call ``twisted PFH'', in more detail. Most of this material is also explained in \cite{pfh2,pfh3,ir,lt,simple}, although we will use a particular bookkeeping formalism of Novikov rings and reference cycles to keep track of areas of holomorphic curves. Apart from this bookkeeping, PFH is extremely similar to ECH, and we will refer to the lecture notes \cite{bn} on ECH for some definitions and basic results that do not differ significantly from the PFH case.

\subsection{PFH generators and holomorphic curves}

Let $(\Sigma,\omega)$ be a closed connected surface of genus $g$ with a symplectic (area) form, and let $\phi:(\Sigma,\omega)\to(\Sigma,\omega)$ be an area-preserving diffeomorphism. 

\begin{definition}
\label{def:orbitset}
An {\em orbit set\/} is a finite set of pairs $\alpha=\{(\alpha_i,m_i)\}$ such that:
\begin{itemize}
\item
The $\alpha_i$ are distinct periodic orbits of $\phi$.
\item
The $m_i$ are positive integers.
\end{itemize}
\end{definition}

Let $Y_\phi$ denote the mapping torus of $\phi$ as in \eqref{eqn:mappingtorus}. For an orbit set as above, regarding the periodic orbits $\alpha_i$ as embedded loops in $Y_\phi$, we define the homology class
\[
[\alpha] = \sum_im_i[\alpha_i]\in H_1(Y_\phi).
\]

A periodic orbit of $\phi$ of period $k$ is {\em nondegenerate\/} if for $x\in\Sigma$ in the periodic orbit, the derivative $d\phi^k_x:T_x\Sigma\to T_x\Sigma$ does not have $1$ as an eigenvalue.  A nondegenerate orbit as above is {\em hyperbolic\/} if $d\phi^k_x$ has real eigenvalues. We say that $\phi$ is nondegenerate if all of its periodic orbits (including multiply covered periodic orbits where the points in $\Sigma$ are not distinct) are nondegenerate; this holds for $C^\infty$ generic $\phi$ in any Hamiltonian isotopy class $\Phi$.

\begin{definition}
\label{def:generator}
Assume that $\phi$ is nondegenerate. A {\em PFH generator\/} is an orbit set $\alpha=\{(\alpha_i,m_i)\}$ such that 
$m_i=1$ whenever $\alpha_i$ is hyperbolic.
\end{definition}

\begin{remark}
The requirement that $m_i=1$ when $\alpha_i$ is hyperbolic is motivated by the relation with Seiberg-Witten theory, and some such condition is needed to obtain a topological invariant; see \cite[\S2.7]{bn}. This requirement is also used in the proof that the differential $\partial_J$ defined below satisfies $\partial_J^2=0$; see \cite[\S5.4]{bn} for explanation in the analogous situation of ECH.
\end{remark}

\begin{notation}
If $\gamma$ and $\gamma'$ are $1$-cycles\footnote{In this paper, a ``$1$-cycle'' will always be a finite integer linear combination of closed oriented $1$-dimensional submanifolds.} in $Y_\phi$ with $[\gamma]=[\gamma']\in H_1(Y_\phi)$, let $H_2(Y_\phi,\gamma,\gamma')$ denote the set of relative homology classes of $2$-chains $Z$ in $Y_\phi$ with $\partial Z = \gamma-\gamma'$. This is an affine space over $H_2(Y_\phi)$.
\end{notation}

To define the differential on the chain complex below, we will need to choose a generic almost complex structure $J$ on $\R\times Y_\phi$ satisfying the following conditions. To state them, let $E\to Y_\phi$ denote the vertical tangent bundle of $Y_\phi\to S^1$; this subbundle of $TY_\phi$ plays an analogous role in PFH to the contact structure $\xi$ in ECH.

\begin{definition}
\label{def:admissible}
An almost complex structure $J$ on $\R\times Y_\phi$ is {\em admissible\/} if:
\begin{itemize}
\item $J(\partial_s)=\partial_t$, where $s$ denotes the $\R$ coordinate on $\R\times Y_\phi$.
\item $J$ is independent of $s$, i.e.\ invariant under translation of the $\R$ factor in $\R\times Y_\phi$.
\item
$J$ sends $E$ to itself, rotating positively with respect to the fiberwise symplectic form $\omega$. This last condition means that if $v\in E$ and $v\neq 0$, then $\omega(v,Jv)>0$.
\end{itemize}
\end{definition}

Fix a generic admissible $J$ as above. We consider $J$-holomorphic curves of the form
\[
u : (C,j) \longrightarrow (\R\times Y_\phi,J),
\]
where the domain $C$ is a compact connected Riemann surface with finitely many punctures. Here $j$ denotes the (almost) complex structure on $C$, and the holomorphic curve equation is $J\circ du = du\circ j$. We assume that in a neighborhood of each puncture, the map $u$ is asymptotic to $\R$ cross a periodic orbit of $\phi$ as $s\to+\infty$ or $s\to-\infty$. We declare two such curves to be equivalent if they differ by composition with a biholomorphism of the domains. The curve $u$ is {\em multiply covered\/} if it factors through a branched cover of domains with degree greater than $1$; otherwise $u$ is {\em somewhere injective\/}. In the latter case, $u$ is an embedding except possibly for finitely many singular points; see \cite[\S5.1]{bn} for explanation in the analogous case of ECH. In the somewhere injective case, we can identify the holomorphic curve $u$ with its image in $\R\times Y_\phi$, which by abuse of notation we also denote by $C$.

We define a {\em $J$-holomorphic current\/} in $\R\times Y_\phi$ to be a finite formal linear combination
\[
\mathcal{C} = \sum_id_iC_i
\]
where the $C_i$ are distinct somewhere injective $J$-holomorphic curves as above, and the $d_i$ are positive integers. If $\alpha$ and $\beta$ are orbit sets with $[\alpha]=[\beta]$, let $\M^J(\alpha,\beta)$ denote the moduli space of $J$-holomorphic currents $
\mathcal{C}$ in $\R\times Y_\phi$ which as currents are asymptotic to $\alpha$ as $s\to+\infty$ and to $\beta$ as $s\to-\infty$. For $Z\in H_2(Y_\phi,\alpha,\beta)$, let $\M^J(\alpha,\beta,Z)$ denote the set of such currents that represent the relative homology class $Z$.  See \cite[\S3]{bn} for more precise definitions in the analogous case of ECH. Note that $\R$ acts on $\M^J(\alpha,\beta,Z)$ by translation of the $\R$ coordinate on $\R\times Y_\phi$.

We note for later use that the admissibility conditions on the almost complex structure $J$ imply the following:

\begin{itemize}
\item
If $\eta$ is a simple periodic orbit of $\partial_t$ in $Y_\phi$, then the ``trivial cylinder'' $\R\times\eta$ is an embedded $J$-holomorphic curve in $\R\times Y_\phi$.
\item
The restriction of $\omega_{\phi}$ to any $J$-holomorphic curve is pointwise nonnegative. Consequently,
\begin{equation}
\label{eqn:positivearea}
\M^J(\alpha,\beta,Z)\neq\emptyset \Longrightarrow \int_Z\omega_\phi\ge 0.
\end{equation}
\end{itemize}

Given orbit sets $\alpha=\{(\alpha_i,m_i)\}$ and $\beta=\{(\beta_j,n_j)\}$ with $[\alpha]=[\beta]$, and given $Z\in H_2(Y_\phi,\alpha,\beta)$, the {\em ECH index\/}\footnote{Perhaps here it should be called the ``PFH index''.} is defined to be
\begin{equation}
\label{eqn:I}
I(\alpha,\beta,Z) = c_\tau(\alpha,\beta,Z) + Q_\tau(\alpha,\beta,Z) + \sum_i\sum_{k=1}^{m_i}\op{CZ}_\tau(\alpha_i^k) - \sum_j\sum_{k=1}^{n_j}\op{CZ}_\tau(\beta_j^k) \in \Z.
\end{equation}
Here $\tau$ is a homotopy class of trivialization of the bundle $E$ over the orbits $\alpha_i$ and $\beta_j$, while $c_\tau$ denotes the relative first Chern class, $Q_\tau$ denotes the relative self-intersection number, and $\op{CZ}_\tau(\gamma^k)$ denotes the Conley-Zehnder index of the $k^{th}$ iterate of the periodic orbit $\gamma$ with respect to $\tau$. The ECH index $I(\alpha,\beta,Z)$ does not depend on the choice of trivialization $\tau$, although the individual terms on the right hand side do. For the proof of this fact, and for detailed definitions of the above notions, see \cite[\S3.3]{pfh2}, or \cite[\S2]{ir} for the more general case of stable Hamiltonian structures which includes both PFH and ECH.

\begin{example}
\label{ex:fiberindex}
If $\alpha=\beta$, then there is a canonical bijection $H_2(Y_\phi,\alpha,\alpha)\simeq H_2(Y_\phi)$, as both sets have the same definition. If $[\Sigma]\in H_2(Y_\phi)$ denotes the homology class of a fiber of $Y_\phi\to S^1$, then we have
\begin{equation}
\label{eqn:fiberindex}
I(\alpha,\alpha,[\Sigma]) = 2 (d - g + 1).
\end{equation}
This is because the first term in \eqref{eqn:I} here is
\[
c_\tau(\alpha,\alpha,[\Sigma])
=
\langle c_1(E),[\Sigma]\rangle = \langle c_1(T\Sigma),[\Sigma]\rangle = 2-2g.
\]
The second term in \eqref{eqn:I} is
\[
Q_\tau(\alpha,\alpha,[\Sigma]) = 2d.
\]
This holds because by \cite[Eq.\ (3.11)]{ir}, the self-intersection number $Q_\tau(\alpha,\alpha,[\Sigma])$ is twice the algebraic intersection number of $\R\times\alpha$ with a fiber. The third term in \eqref{eqn:I} here is zero because the sums are empty.
\end{example}

If $\mc{C}\in\M^J(\alpha,\beta,Z)$, we write $I(\mc{C})=I(\alpha,\beta,Z)$. A key property of the ECH index is the following; see e.g.\ \cite[Prop.\ 3.7]{bn} for the ECH case which is completely analogous.

\begin{proposition}
\label{prop:fundamental}
Suppose that $J$ is generic. Let $\alpha$ and $\beta$ be orbit sets with $[\alpha]=[\beta]\in H_1(Y_\phi)$. Suppose\footnote{See Remark~\ref{rem:admissible} for the reason why we assume that $[\alpha]\cdot[\Sigma]>g$.} that $[\alpha]\cdot[\Sigma]>g$. Then:
\begin{description}
\item{(a)}
If $I(\mc{C})=1$, then we can write $\mc{C}=\mc{C}_0+C_1$, where $\mc{C}_0$ is $\R$-invariant, i.e.\ a (possibly zero) finite linear combination of trivial cylinders, and $C_1$ is an embedded holomorphic curve of Fredholm index $1$, cut out transversely.
\item{(b)}
If $I(\mc{C})=2$ and if $\alpha$ and $\beta$ are PFH generators, then we can write $\mc{C}=\mc{C}_0+C_2$, where $\mc{C}_0$ is $\R$-invariant, and $C_2$ is an embedded holomorphic curve of Fredholm index $2$, cut out transversely.
\end{description}
\end{proposition}

Note that the embedded holomorphic curves $C_1$ and $C_2$ above live in moduli spaces of embedded $J$-holomorphic curves which have dimension one and two, respectively.

\subsection{The chain complex}

To define the PFH of $\phi$ in general, we need to keep track of some information about relative homology classes of holomorphic curves in $\R\times Y_\phi$. There are different options for how to do this, resulting in different versions of PFH. We will use a version with Novikov ring coefficients, which depends on the following choice:

\begin{choice}
\label{choice:G}
Let $\Ker([\omega_\phi])$ denote the kernel of $\langle [\omega_\phi],\cdot\rangle: H_2(Y_\phi)\to\R$. Below, fix a subgroup $G\subset\Ker([\omega_\phi])$. On a first reading it may be simplest to just consider the case $G=\{0\}$, although later we will find it convenient to choose $G=\Ker([\omega_\phi])$.
\end{choice}

\begin{definition}
Let $q$ be a formal variable and\footnote{One can also use coefficients in $\Z$ instead of $\Z/2$; it is explained in \cite[\S9]{obg2} how to set up orientations for the differential in ECH, and this carries over to PFH. However the applications in this paper, and all other applications of ECH and PFH so far, only need $\Z/2$ coefficients.} write $\F=\Z/2$. Let $\Lambda_G$ denote the Novikov ring consisting of formal sums
\[
\sum_{A\in H_2(Y_\phi)/G}p_Aq^A
\]
where $p_A\in\F$, such that for each $R\in\R$, there are only finitely many $A$ such that $p_A\neq 0$ and $\langle [\omega_\phi],A\rangle > R$.
\end{definition}

\begin{definition}
A {\em reference cycle\/} for $\phi$ is a $1$-cycle $\gamma$ in $Y_\phi$.  We define the {\em degree\/} $d(\gamma)=[\gamma]\cdot[\Sigma]\in\Z$, where $[\Sigma]\in H_2(Y_\phi)$ denotes the homology class of a fiber of $Y_\phi\to S^1$. Note that if $\alpha=\{(\alpha_i,m_i)\}$ is an orbit set with $[\alpha]=[\gamma]\in H_1(Y_\phi)$, then the total period of the orbits $\alpha_i$, counted with their multiplicities $m_i$, must equal the degree $d(\gamma)$, and in particular $d(\gamma)\ge 0$ if such an orbit set exists.
\end{definition}

\begin{definition}
Fix a subgroup $G$ as above and a reference cycle $\gamma$ for $\phi$. A {\em $(G,\gamma)$-anchored orbit set\/} is a pair $(\alpha,Z)$, where $\alpha$ is an orbit set with $[\alpha]=[\gamma]\in H_1(Y_\phi)$, and $Z\in H_2(Y_\phi,\alpha,\gamma)/G$. 

We define the {\em symplectic action\/} of $(\alpha,Z)$ by
\[
\mc{A}(\alpha,Z) = \int_Z\omega_\phi.
\]
This is well defined by our assumption that $G\subset\Ker([\omega_\phi])$.

When $\phi$ is nondegenerate, a {\em $(G,\gamma)$-anchored PFH generator\/} is a $(G,\gamma)$-anchored orbit set $(\alpha,Z)$ for which $\alpha$ is a PFH generator. 
\end{definition}

We can now define the periodic Floer homology $HP(\phi,\gamma,G)$, which is a $\Lambda_G$-module.

\begin{definition}
\label{def:chains}
If $\phi$ is nondegenerate and $\gamma$ is a reference cycle, define $CP(\phi,\gamma,G)$ to be the set of (possibly infinite) formal sums
\begin{equation}
\label{eqn:formalsum}
\sum_{\alpha,Z}n_{\alpha,Z}(\alpha,Z)
\end{equation}
where:
\begin{itemize}
\item
The sum is over $(G,\gamma)$-anchored PFH generators $(\alpha,Z)$.
\item
Each coefficient $n_{\alpha,Z}\in\F$.
\item
For each $R\in\R$, there are only finitely many $(\alpha,Z)$ such that $n_{\alpha,Z}\neq 0$ and $\mc{A}(\alpha,Z) > R$.
\end{itemize}
Then $CP(\phi,\gamma,G)$ is a $\Lambda_G$-module, with the $\Lambda_G$-action given by
\[
\sum_{A\in H_2(Y_\phi)/G}p_Aq^A \cdot \sum_{\alpha,Z}n_{\alpha,Z}(\alpha,Z) = 
 \sum_{\alpha,W}\left(\sum_{A\in H_2(Y_\phi)/G}p_An_{\alpha,W-A}\right)(\alpha,W).
\]
The finiteness conditions imply that the right hand side\footnote{One can also write the right hand side in more informal notation as $\sum_{A,\alpha,Z}p_An_{\alpha,Z}(\alpha,Z+A)$.} is a well defined element of $CP(\phi,\gamma,G)$.
\end{definition}

\begin{remark}
\label{rem:admissible}
In general, to define the differential when $d(\gamma)\le g$, one needs to choose a slight perturbation of an admissible almost complex structure, relaxing the last condition in Definition~\ref{def:admissible}. This is because if $J(E)=E$, then each fiber of $\R\times Y_\phi\to\R\times S^1$ is a $J$-holomorphic curve, which is not cut out transversely when $g>0$, and this interferes with compactness arguments to define the PFH differential when $d(\gamma)\le g$. See e.g.\ \cite[\S9.5]{pfh2}. For simplicity, {\bf we assume below that $d(\gamma)>g$\/} so that we can stick with admissible almost complex structures. The theory below can be extended to the case $d(\gamma)\le g$ with some additional work, but this will not be necessary for the applications here.
\end{remark}

\begin{definition}
For generic admissible $J$, we define a differential
\[
\partial_J: CP(\phi,\gamma,G) \longrightarrow CP(\phi,\gamma,G)
\]
by
\begin{equation}
\label{eqn:defdj}
\partial_J\sum_{\alpha,Z}n_{\alpha,Z}(\alpha,Z) =
\sum_{\beta,W}\left(\sum_\alpha\sum_{\substack{V\in H_2(Y_\phi,\alpha,\beta)\\I(\alpha,\beta,V)=1}}n_{\alpha,W+V}\#\frac{\M^J(\alpha,\beta,V)}{\R}\right)(\beta,W).
\end{equation}
Here the first sum on the right hand side is over $(G,\gamma)$-anchored PFH generators $(\beta,W)$, the second sum on the right hand side is over PFH generators $\alpha$ homologous to $\gamma$, and $\#$ denotes the mod $2$ count.
\end{definition}

\begin{lemma}
\label{lem:ddefined}
$\partial_J$ is well defined.
\end{lemma}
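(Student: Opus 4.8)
The plan is to show that $\partial_J$ lands in $CP(\phi,\gamma,G)$ (i.e.\ the output formal sum satisfies the required finiteness condition) and that each individual coefficient is a finite count. There are two things to check: first, that for fixed PFH generators $\alpha,\beta$ with $[\alpha]=[\beta]=[\gamma]$ and fixed $Z\in H_2(Y_\phi,\alpha,\beta)$ with $I(\alpha,\beta,Z)=1$, the moduli space $\M^J(\alpha,\beta,Z)/\R$ is a finite set, so that $\#\M^J(\alpha,\beta,Z)/\R \in \F$ is well defined; and second, that the formal sum on the right-hand side of \eqref{eqn:defdj} is an element of $CP(\phi,\gamma,G)$, meaning that for each $R\in\R$ there are only finitely many $(G,\gamma)$-anchored PFH generators $(\beta,W)$ with nonzero coefficient and $\mc{A}(\beta,W)>R$.

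For the first point, I would invoke the standard ECH compactness and index machinery (as in \cite[\S3]{bn}), which carries over verbatim to PFH under our standing assumption $d(\gamma)>g$ from Remark~\ref{rem:admissible}: since $J$ is generic and $I(C)=1$, any $C\in\M^J(\alpha,\beta,Z)$ consists of a union of trivial cylinders together with an embedded Fredholm index $1$ curve, so $\M^J(\alpha,\beta,Z)/\R$ is a compact zero-dimensional manifold, hence finite. The only subtlety is Gromov compactness: a sequence in $\M^J(\alpha,\beta,Z)$ could a priori converge to a broken configuration, but since the limit must still represent the class $Z$ with total ECH index $1$, and ECH index is additive under gluing while each non-trivial level has ECH index $\ge 1$, no breaking occurs after modding out by $\R$; this is exactly the argument behind \cite[Prop.\ 3.7]{bn}.

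For the second point — which I expect to be the main obstacle, or at least the part requiring the bookkeeping careful — I would argue as follows. Suppose $\sum_{\alpha,Z}n_{\alpha,Z}(\alpha,Z)$ lies in $CP(\phi,\gamma,G)$, so for each $R$ only finitely many $(\alpha,Z)$ with $n_{\alpha,Z}\ne 0$ have $\mc{A}(\alpha,Z)>R$. Fix $R\in\R$ and suppose $(\beta,W)$ appears in $\partial_J$ of this sum with $\mc{A}(\beta,W)>R$; then there is some $(\alpha, W+Z)$ with $n_{\alpha,W+Z}\ne 0$, $I(\alpha,\beta,Z)=1$, and $\M^J(\alpha,\beta,Z)\ne\emptyset$. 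By \eqref{eqn:positivearea}, $\int_Z\omega_\phi\ge 0$, hence $\mc{A}(\alpha,W+Z)=\mc{A}(\beta,W)+\int_Z\omega_\phi \ge \mc{A}(\beta,W) > R$. So $(\alpha,W+Z)$ ranges over a finite set $S$ of anchored PFH generators. For each fixed $(\alpha,W+Z)\in S$ and each fixed $\beta$ (of which there are finitely many homologous to $\gamma$ with action bounded below — one should note that for a nondegenerate $\phi$ there are only finitely many periodic orbits of each period, so only finitely many orbit sets $\beta$ of the fixed degree $d(\gamma)$), the relative class $Z\in H_2(Y_\phi,\alpha,\beta)$ is determined modulo $G$ by requiring $\mc{A}(\beta,W) = \mc{A}(\alpha,W+Z) - \int_Z\omega_\phi$ together with $I(\alpha,\beta,Z)=1$; and in fact once $\alpha$, $\beta$, and the $G$-coset of $W+Z$ are fixed, $W$ is determined, since $W = (W+Z) - Z$ in $H_2(Y_\phi,\alpha,\gamma)/G$. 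One checks there are only finitely many admissible $Z$ modulo $G$: the condition $\int_Z\omega_\phi$ bounded (between $0$ and $\mc{A}(\alpha,W+Z)-R$) together with $I(\alpha,\beta,Z)=1$ cuts down to finitely many classes modulo $G$, because $\langle[\omega_\phi],\cdot\rangle$ and $I$ together pin down a class in $H_2(Y_\phi,\alpha,\beta)$ up to $\Ker[\omega_\phi]\cap\Ker(I\text{-part})$, and one arranges $G$ to absorb the relevant ambiguity — this is precisely the role of the Novikov finiteness condition. Therefore only finitely many $(\beta,W)$ with $\mc{A}(\beta,W)>R$ occur, which is what we needed.

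I would then remark that all of this is formally identical to the corresponding verification in ECH, with the Novikov ring $\Lambda_G$ and reference cycle $\gamma$ playing the role of the bookkeeping device, and point the reader to \cite[\S3]{bn} and \cite{pfh2,pfh3,ir} for the details of the compactness and index computations that are unchanged.
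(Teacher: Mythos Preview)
Your overall architecture matches the paper's: use \eqref{eqn:positivearea} to push the action bound from $(\beta,W)$ back to $(\alpha,W+Z)$, invoke the Novikov finiteness on the input to reduce to finitely many $(\alpha,V)$ with $V=W+Z$, and then finish by bounding the contributing $Z$'s for each fixed $(\alpha,V)$ and $\beta$. The gap is in this last step.

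You claim that the constraints $I(\alpha,\beta,Z)=1$ and $0\le\int_Z\omega_\phi<\mc{A}(\alpha,V)-R$ cut the admissible $Z$ down to finitely many classes modulo $G$, because $[\omega_\phi]$ and the ECH index ``pin down'' $Z$ up to $\Ker([\omega_\phi])\cap\Ker\langle c_1(E)+2\op{PD}(\Gamma),\cdot\rangle$, and ``one arranges $G$ to absorb the relevant ambiguity.'' But $G$ is \emph{fixed}, not at your disposal, and may be $\{0\}$. The intersection $\Ker([\omega_\phi])\cap\Ker\langle c_1(E)+2\op{PD}(\Gamma),\cdot\rangle$ can easily be nontrivial: for $\phi=\op{id}_{T^2}$ one has $Y_\phi=T^3$, $c_1(E)=0$, and both kernels coincide with a rank-$2$ sublattice of $H_2(T^3)$. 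So with $G=\{0\}$ there are infinitely many $Z$ satisfying your two algebraic constraints, and your argument does not terminate. The same issue bites the finiteness of each individual coefficient: for fixed $(\beta,W)$ and fixed $(\alpha,V)$, the relevant $Z$'s range over a full $G$-coset with fixed $\omega_\phi$-integral, and within that coset there may be infinitely many with $I=1$.

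The paper closes this gap not algebraically but analytically: for fixed $\alpha$, $\beta$, and an upper bound on $\int_Z\omega_\phi$, the Gromov-type compactness theorem \cite[Thm.~1.8]{pfh2} asserts that the union over all such $Z$ with $I(\alpha,\beta,Z)=1$ of the moduli spaces $\M^J(\alpha,\beta,Z)/\R$ is finite. This single statement simultaneously bounds the number of $Z$'s with nonempty moduli space (hence the number of output $(\beta,W)$) and guarantees each coefficient is a finite sum. Your first point --- finiteness of $\M^J(\alpha,\beta,Z)/\R$ for one \emph{fixed} $Z$ --- is correct but not enough; the essential input is compactness across all $Z$ of bounded energy.
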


\begin{proof}
Assume that $J$ is generic. By Proposition~\ref{prop:fundamental}(a) and the compactness result\footnote{The compactness result and other results in \cite{pfh2} made an additional hypothesis of ``$d$-admissibility'', asserting that $(\phi,J)$ has a nice form near the periodic orbits of period at most $d(\gamma)$. This hypothesis is no longer needed thanks to the asymptotic analysis of Siefring \cite{siefring}.} of \cite[Thm.\ 1.8(a)]{pfh2}, given homologous PFH generators $\alpha$ and $\beta$ and given $R>0$, the set
\[
\bigcup_{\substack{V\in H_2(Y_\phi,\alpha,\beta)\\I(\alpha,\beta,V)=1\\\int_V\omega_\phi < R}}\frac{\mathcal{M}^J(\alpha,\beta,V)}{\R}
\]
is finite. In particular, for a fixed $V\in H_2(Y_\phi,\alpha,\beta)$ with $I(\alpha,\beta,V)=1$, the set $\mathcal{M}^J(\alpha,\beta,V)/\R$ is finite, so that the mod 2 counts in \eqref{eqn:defdj} are well defined.

To prove that the sum on the right hand side of \eqref{eqn:defdj} is a well defined element of $CP(\phi,\gamma,G)$, fix $\sum_{\alpha,Z}n_{\alpha,Z}(\alpha,Z)\in CP(\phi,\gamma,G)$. We need to show that for each real number $R$, there are only finitely many $(G,\gamma)$-anchored PFH generators $(\beta,W)$ with $\int_W\omega_\phi>R$ such that the sum in parentheses on the right hand side of \eqref{eqn:defdj} has any nonzero terms; and we need to show that for each such $(\beta,W)$, there are only finitely many nonzero terms.

By \eqref{eqn:positivearea}, if $\int_W\omega_\phi>R$ and $\M^J(\alpha,\beta,V)$ is nonempty, then $\int_{V}\omega_\phi\ge 0$, so $\int_{W+V}\omega_\phi > R$. Write $Z=W+V$. By the definition of $CP(\phi,\gamma,G)$, there are only finitely many $(G,\gamma)$-anchored PFH generators $(\alpha,Z)$ with $\int_Z\omega_\phi>R$ and $n_{\alpha,Z}\neq 0$. For each such pair, and for each of the finitely many PFH generators $\beta$ with $[\beta]=[\gamma]$, it follows from the aforementioned compactness result of \cite[Thm.\ 1.8(a)]{pfh2} that the union over $W$ with $\int_W\omega_\phi>R$ and $I(\alpha,\beta,Z-W)=1$ of the moduli spaces $\M^J(\alpha,\beta,Z-W)/\R$ is finite.
\end{proof}

It follows from minor modifications of \cite[Thm.\ 7.20]{obg1} (which applies to ECH) that $\partial_J^2=0$.

\begin{definition}
We define the {\em periodic Floer homology} $HP(\phi,\gamma,G)$ to be the homology of the chain complex $(CP(\phi,\gamma,G),\partial_J)$.
\end{definition}

The $\Lambda_G$-module $HP(\phi,\gamma,G)$ does not depend on the choice of $J$. That is, if $J_1$ and $J_2$ are admissible and generic, then there is a canonical isomorphism
\begin{equation}
\label{eqn:PsiJ1J2}
\Psi_{J_1,J_2}: H_*((CP(\phi,\gamma,G),\partial_{J_2}) \stackrel{\simeq}{\longrightarrow} H_*((CP(\phi,\gamma,G),\partial_{J_1}).
\end{equation}
These canonical isomorphisms have the properties that $\Psi_{J_1,J_2}\circ \Psi_{J_2,J_3}=\Psi_{J_1,J_3}$ when $J_3$ is admissible and generic, and $\Psi_{J_1,J_1}$ is the identity.

We have the canonical isomorphisms \eqref{eqn:PsiJ1J2} because by a special case\footnote{
The version of PFH that appears in \cite[Thm.\ 6.2]{lt} uses different notation and depends on a ``$(c_\Gamma,[\omega_\phi])$-complete local system for periodic Floer homology'', where $\Gamma=[\gamma]\in H_1(Y_\phi)$. Our version of PFH arises from such a local system, which assigns to each PFH generator $\alpha$ with $[\alpha]=\Gamma$ the $\Lambda_G$-module consisting of elements of $CP(\phi,\gamma,G)$ only involving $\alpha$. If $\beta$ is another PFH generator with $[\beta]=\Gamma$, then the local system assigns to each class $V\in H_2(Y_\phi,\alpha,\beta)$ the morphism of $\Lambda_G$-modules sending $\sum_Zn_Z(\alpha,Z)\mapsto \sum_Wn_{W+V}(\beta,W)$. The case $G=0$, which is called ``maximally-twisted coefficients'' in \cite{lt}, is further discussed in \cite[Cor.\ 6.1]{lt}.
}
of a theorem of Lee-Taubes \cite[Thm.\ 6.2]{lt}, there is a canonical isomorphism
\begin{equation}
\label{eqn:leetaubes}
H_*((CP(\phi,\gamma,G),\partial_J)) \simeq HM^{-*}(Y_\phi,\frak{s}_\Gamma,-r[\omega_\phi];\Lambda_G)
\end{equation}
for $r>>0$.
Here the right hand side is a version of Seiberg-Witten Floer cohomology as defined by Kronheimer-Mrowka \cite[Def.\ 30.2.3]{km}, perturbed using the cohomology class $-r[\omega_\phi]$, while $\frak{s}_\Gamma$ is a spin-c structure determined by $\Gamma=[\gamma]$. This version of Seiberg-Witten Floer cohomology is the homology of a chain complex which is generated over $\Lambda_G$ by solutions to the three-dimensional Seiberg-Witten equations on $Y_\phi$, perturbed using the closed $2$-form $r\omega_\phi$, modulo gauge transformations $Y_\phi\to S^1$; see \cite[\S1.2]{lt}. The differential counts solutions to similarly perturbed four-dimensional Seiberg-Witten equations on $\R\times Y_\phi$, modulo gauge transformations $\R\times Y_\phi\to S^1$ whose homotopy class in $[\R\times Y_\phi,S^1]=H_2(Y_\phi)$ is contained in the subgroup $G$. The proof of the isomorphism \eqref{eqn:leetaubes} shows that for $r>>0$, if we choose the metric in the Seiberg-Witten equations to be determined by $\omega_\phi$ and $J$, then there is an isomorphism on the chain level. In fact, according to \cite[Thm.\ 6.1]{lt}, if $r>>0$ then there is a bijection between the Seiberg-Witten solutions counted by the Seiberg-Witten Floer differential, and the $J$-holomorphic currents counted by the PFH differential. Modding out these Seiberg-Witten solutions by a restricted set of gauge transformations corresponds to keeping track of relative homology classes of holomorphic currents on the PFH side.

\begin{remark}
\label{rem:crc}
If $\gamma'$ is another reference cycle with $[\gamma]=[\gamma']$, and if $Z\in H_2(Y_\phi,\gamma,\gamma')/G$, then it follows from the definition \eqref{eqn:defdj} that there is an isomorphism of chain complexes
\[
\psi_Z: (CP(\phi,\gamma,G),\partial_J) \stackrel{\simeq}{\longrightarrow} (CP(\phi,\gamma',G),\partial_J)
\]
sending
\begin{equation}
\label{eqn:caniso}
\sum_{\alpha,W}n_{\alpha,W}(\alpha,W) \longmapsto \sum_{\alpha,W}n_{\alpha,W+Z}(\alpha,W).
\end{equation}
This induces an isomorphism of $\Lambda_G$-modules
\begin{equation}
\label{eqn:psiz}
\Psi_Z: HP(\phi,\gamma,G) \stackrel{\simeq}{\longrightarrow} HP(\phi,\gamma',G)
\end{equation}
depending only on the relative homology class $Z$. Thus, up to isomorphism, $HP(\phi,\gamma,G)$ depends only on the diffeomorphism $\phi$ and the homology class $[\gamma]$, and not on the choice of reference cycle $\gamma$.
\end{remark}

We will see in Proposition~\ref{prop:invariance} below that the isomorphism class of $HP(\phi,\gamma,G)$ is also invariant under Hamiltonian isotopy of $\phi$. Thus for a Hamiltonian isotopy class $\Phi$, a homology class $\Gamma\in H_1(Y_\phi)$, and a subgroup $G$ of $\Ker([\omega_\phi])$, we have a well-defined isomorphism class of $\Lambda_G$-modules $HP(\Phi,\Gamma,G)$.

\subsection{Examples of PFH}

\begin{example}
\label{ex:identity}
Let $\phi$ be the identity map on $\Sigma$. Although $\phi$ is degenerate, one can define its PFH to be the PFH of a nondegenerate Hamiltonian perturbation; see Remark~\ref{rem:degeneratePFH} below.

The mapping torus is given by $Y_\phi = S^1\times \Sigma$. We have
\[
H_2(Y_\phi) = H_2(\Sigma) \oplus \left(H_1(S^1)\tensor H_1(\Sigma)\right).
\]
It follows that the Novikov ring $\Lambda_G$ consists of formal sums
\begin{equation}
\label{eqn:idlambda}
\sum_{k\le k_0}a_kq^{k[\Sigma]}
\end{equation}
where the coefficients $a_k$ are elements of a group ring,
\[
a_k \in \F[(H_1(S^1)\tensor H_1(\Sigma))/G].
\]
Write $[S^1]=[S^1]\times\{\op{pt}\}\in H_1(Y_\phi)$. If $d$ is a nonnegative integer and if $\Gamma=d[S^1]$, then we can choose the reference cycle $\gamma$ to consist of $d$ circles of the form $S^1\times\{x\}$, and there is an isomorphism
\begin{equation}
\label{eqn:identity}
HP\left(\op{id}_\Sigma, d[S^1]\times\{x\},G\right) \simeq \op{Sym}^d H_*(\Sigma;\F) \tensor_\F \Lambda_G.
\end{equation}
Here $\op{Sym}^d$ denotes the degree $d$ part of the graded symmetric product; given a homogeneous basis of $H_*(\Sigma;\F)$, this is a vector space over $\F$ with a basis consisting of symmetric degree $d$ monomials in basis elements of $H_*(\Sigma;\F)$, where basis elements in $H_1(\Sigma;\F)$ cannot be repeated\footnote{It is also true (a special property of surfaces) that there is a canonical isomorphism $\op{Sym}^d H_*(\Sigma;\F) = H_*(\op{Sym}^d\Sigma;\F)$, although we do not need this.}.

To prove the isomorphism \eqref{eqn:identity}, one fixes $d$ and replaces the identity map with the time $1$ flow $\phi$ of a $C^2$-small autonomous Hamiltonian $H:\Sigma\to\R$ which is a Morse function. It follows from Definition~\ref{def:generator} that PFH generators in the class $\Gamma=d[S^1]$ correspond to degree $d$ symmetric monomials in critical points of $H$, where index $1$ critical points cannot be repeated. One can choose a metric $g_\Sigma$ on $\Sigma$ making the pair $(H,g_\Sigma)$ Morse-Smale, along with a corresponding almost complex structure $J$ on $\R\times Y_\phi$ for which Morse flow lines give rise to $J$-holomorphic cylinders. The $S^1$ symmetry of the mapping torus can be used to show that no other $J$-holomorphic curves contribute to the PFH differential; this argument is worked out in \cite{farris,nw} for the very similar problem of computing the ECH of prequantization bundles. In particular, if we choose $H$ to be a perfect Morse function (this means that the Morse homology differential vanishes, or equivalently here that there are exactly $2g+2$ critical points), then the PFH differential vanishes, and the chain complex agrees with the right hand side of \eqref{eqn:identity}. The isomorphism \eqref{eqn:identity} depends only on a choice of anchors for the degree $d$ PFH generators.

For a homology class $\Gamma\in H_1(S^1\times\Sigma)$ which is not of the form $d[S^1]$ for a nonnegative integer $d$, the PFH is zero, because after a small perturbation of the identity as above, there are no PFH generators in the class $\Gamma$.
\end{example}

Some more examples of PFH (more precisely untwisted PFH in the monotone case, see \S\ref{sec:monotone} below) are computed in \cite{pfh3} and \cite[Cor.\ 1.5]{lt}. For classes $\Gamma$ with $d=\Gamma\cdot[\Sigma]=1$, the PFH is closely related\footnote{One might expect that $d=1$ PFH and fixed point Floer homology are the same, both with the differential counting holomorphic cylinders. However when $g(\Sigma)>0$, in principle the PFH differential may count some additional holomorphic curves, due to the fact that $d\le g$ here; see Remark~\ref{rem:admissible}.} to the Floer homology for symplectic fixed points, which has been computed by Cotton-Clay \cite{cottonclay}.

\subsection{The $U$ map}
\label{sec:Umap}

There is also a well-defined map
\begin{equation}
\label{eqn:Umap}
U:HP(\phi,\gamma,G)\longrightarrow HP(\phi,\gamma,G).
\end{equation}
This is induced by a chain map which is defined analogously to the differential \eqref{eqn:defdj}; but instead of counting $I=1$ holomorphic currents modulo $\R$ translation, it counts $I=2$ holomorphic currents that are constrained to pass through a base point in $\R\times Y_\phi$.

To be precise, fix $y\in Y_\phi$ which is not on any periodic orbit of the vector field $\partial_t$. Given homologous PFH generators $\alpha$ and $\beta$, and given $Z\in H_2(Y_\phi,\alpha,\beta)$, define
\[
\mathcal{M}^J_y(\alpha,\beta,Z) = \left\{\mathcal{C}\in\mathcal{M}^J(\alpha,\beta,Z) \;\big|\; (0,y)\in\mathcal{C}\right\}.
\]
For a generic admissible $J$ we define a map
\[
U_{J,y}: CP(\phi,\gamma,G) \longrightarrow CP(\phi,\gamma,G)
\]
by
\[
U_{J,y}\sum_{\alpha,Z}n_{\alpha,Z}(\alpha,Z) =
\sum_{\beta,W}\left(\sum_\alpha\sum_{\substack{V\in H_2(Y_\phi,\alpha,\beta)\\I(\alpha,\beta,V)=2}}n_{\alpha,W+V}\#\M^J_y(\alpha,\beta,V)\right)(\beta,W).
\]
This map is well defined by an argument similar to the proof of Lemma~\ref{lem:ddefined}, using Proposition~\ref{prop:fundamental}(b). Similarly to the proof that $\partial_J^2=0$, the map $U_{J,y}$ is a chain map:
\[
\partial_J U_{J,y} = U_{J,y} \partial_J.
\]
We define the $U$ map \eqref{eqn:Umap} to be the map on homology induced by the chain map $U_{J,y}$. Since $Y_\phi$ is connected, the $U$ map \eqref{eqn:Umap} does not depend on the choice of base point $y$; one can use a path between two choices of base point $y$ to define a chain homotopy between the corresponding chain maps $U_{J,y}$. See \cite[\S2.5]{wh} for details in the analogous case of ECH. 

The $U$ map \eqref{eqn:Umap} does not depend on the choice of $J$ either, because under the Lee-Taubes isomorphism \eqref{eqn:leetaubes}, it corresponds to a $U$ map on Seiberg-Witten Floer homology defined by Kronheimer-Mrowka in \cite[\S25.3]{km}. Taubes proved an analogous statement for ECH in \cite[Thm.\ 1.1]{taubes5}, and the same argument works for the PFH case\footnote{It should be noted that Kronheimer-Mrowka and Taubes use different but equivalent definitions of the $U$ map on Seiberg-Witten Floer homology. Taubes defines the $U$ map from a chain map which counts Seiberg-Witten solutions for which $\alpha$ vanishes at the base point $(0,y)\in\R\times Y$. Here $\alpha$ denotes the component of the spinor in the $+i$ eigenspace of Clifford multiplication by $\lambda$; see \cite[\S1.b]{taubes5}. Kronheimer-Mrowka define the $U$ map from a chain map counting solutions to Seiberg-Witten moduli spaces after taking the cap product of the moduli spaces with a Cech cocycle representing the cohomology class in the configuration space corresponding to the generator of $H^2(\C P^\infty;\Z)$; see \cite[\S9.7]{km}. One can choose the Cech cocycle so that the two chain maps agree.}.

We will see in Proposition~\ref{prop:invariance} below that the $U$ map is invariant (in a certain sense) under Hamiltonian isotopy of $\phi$.

\begin{example}
\label{ex:US2}
Suppose that $\Sigma=S^2$ and $\phi$ is Hamiltonian isotopic to the identity. Let $d$ be a positive integer, and set $\gamma=d[S^1]\times\{x\}$ as in Example~\ref{ex:identity}. Here we must take $G=\{0\}$. Under the identification \eqref{eqn:identity}, denote the generators of $\op{Sym}^d H_*(S^2;\F)$, in increasing homological degree, by $e_{d,0},e_{d,1},\ldots,e_{d,d}$. Then a calculation as in \cite[\S4.1]{bn} shows that
\[
\begin{split}
Ue_{d,i} &= e_{d,i-1}, \quad\quad\quad i=1,\ldots,d,\\
Ue_{d,0} &= q^{-[S^2]}e_{d,d}.
\end{split}
\]
\end{example}

The above example has an important property which we now formalize.

\begin{definition}
Let $\phi$ be an area-preserving diffeomorphism of $(\Sigma,\omega)$, let $\gamma$ be a reference cycle for $\phi$, and let $G$ be as in Choice~\ref{choice:G}. We say that a nonzero element $\sigma\in HP(\phi,\gamma,G)$ is {\em $U$-cyclic\/} if there is a positive integer $m$ such that
\begin{equation}
\label{eqn:Ucyclic}
U^{m(d(\gamma)-g+1)}\sigma = q^{-m[\Sigma]}\sigma.
\end{equation}
We say that $\sigma$ is $U$-cyclic {\em of order $m$} if $m$ is the smallest positive integer with this property. (Here $\phi$ can be degenerate; see Remark~\ref{rem:degeneratePFH}.)
\end{definition}

\begin{remark}
In general, if $U^k\sigma=q^{-m[\Sigma]}\sigma$ for some $k$, then we must have $k=m(d(\gamma)-g+1)$. The reason is that in the nondegenerate case, $U$ counts holomorphic currents with ECH index $I=2$; while if $\alpha$ is any PFH generator with $\alpha\cdot[\Sigma]=d$, then by Example~\ref{ex:fiberindex} we have $I(\alpha,\alpha,[\Sigma]) = 2(d-g+1)$.
\end{remark}

\begin{example}
\label{ex:US3}
Suppose that $\Sigma=S^2$ and $\gamma=d[S^1]\times\{x\}$ where $d$ is a positive integer. Then it follows from Example~\ref{ex:US2} that $e_{d,i}$, and indeed every nonzero element of $HP(\phi,\gamma,\{0\})$, is $U$-cyclic of order $1$.
\end{example}

\begin{definition}
\label{def:UCP}
We say that the Hamiltonian isotopy class $[\phi]$ has the {\em $U$-cycle property\/} if there exist $U$-cyclic elements with arbitrarily large degree. That is, we require that for all positive integers $d_0$, there exist a subgroup $G\subset\Ker([\omega_\phi])$, a reference cycle $\gamma$ for $\phi$ with $d(\gamma)\ge d_0$, and a $U$-cyclic element $\sigma \in HP(\phi,\gamma,G)$. (This condition is invariant under Hamiltonian isotopy of $\phi$.)
\end{definition}

\subsection{Filtered PFH}

Fix a nondegenerate symplectomorphism $\phi$, a reference cycle $\gamma$, a group $G$ as in Choice~\ref{choice:G}, and a real number $L$. Define $CP^L(\phi,\gamma,G)$ to be the set of formal sums \eqref{eqn:formalsum} in $CP(\phi,\gamma,G)$ such that $\mc{A}(\alpha,Z)<L$ whenever $n_{\alpha,Z}\neq 0$. For a generic admissible $J$, it follows from \eqref{eqn:positivearea} that $CP^L(\phi,\gamma,G)$ is a subcomplex of $(CP(\phi,\gamma,G),\partial_J)$.

\begin{definition}
We define the {\em filtered PFH}, denoted by $HP^L(\phi,\gamma,G)$, to be the homology of the subcomplex $(CP^L(\phi,\gamma,G),\partial_J)$.
\end{definition}

Inclusion of chain complexes induces a map
\begin{equation}
\label{eqn:imath1}
\imath^L: HP^L(\phi,\gamma,G)\longrightarrow HP(\phi,\gamma,G).
\end{equation}
Similarly we have inclusion-induced maps
\begin{equation}
\label{eqn:imath2}
\imath^{L_1,L_2}: HP^{L_1}(\phi,\gamma,G)\longrightarrow HP^{L_2}(\phi,\gamma,G)
\end{equation}
for $L_1\le L_2$. With respect to these maps, $HP(\phi,\gamma,G)$ is the direct limit of $HP^L(\phi,\gamma,G)$ as $L\to\infty$.

The filtered homology $HP^L(\phi,\gamma,G)$, as well as the inclusion-induced maps \eqref{eqn:imath1} and \eqref{eqn:imath2}, do not depend on the choice of $J$. An analogous statement for ECH is proved in \cite[Thm.\ 1.3]{cc2}, and a similar argument applies here.

\subsection{The monotone case}
\label{sec:monotone}

We now recall two alternate versions of PFH which are defined in the following special situation, which is possible when $[\phi]$ is rational. This is in fact the only case that we need to consider in order to prove our theorems stated in \S\ref{sec:intro}.

\begin{definition}
\label{def:monotone}
For $\Gamma\in H_1(Y_\phi)$, we say that the pair $(\phi,\Gamma)$ is {\em monotone\/} if\footnote{This condition is an analogue of the following. In the context of Hamiltonian Floer homology, one says that a symplectic manifold $(X,\omega)$ is ``monotone'' if $[\omega]$ is a real multiple  (sometimes assumed to be nonnegative) of $c_1(TX)$ on $\pi_2(X)$. For both PFH and Hamiltonian Floer homology, monotonicity allows one to bound the symplectic area of holomorphic curves in terms of their index, which is a step towards obtaining finite counts. See e.g. \cite{hofersalamon}, which introduced the use of Novikov rings to define Hamiltonian Floer homology in some non-monotone cases.} the cohomology class $[\omega]\in H^2(Y_\phi;\R)$ is a real multiple of the image of the class
\[
c_1(E)+2\op{PD}(\Gamma)\in H^2(Y_\phi;\Z).
\]
\end{definition}

In this case, one can define a simpler, ``untwisted'' version\footnote{This is the original version of PFH from \cite{pfh2,pfh3}.} of PFH, which we denote here by $\overline{HP}(\phi,\Gamma)$. When $\phi$ is nondegenerate, this is the homology of a chain complex $\overline{CP}(\phi,\Gamma)$ which is freely generated over $\F$ by the PFH generators in the homology class $\Gamma$. For a generic admissible almost complex structure $J$, the differential is defined by
\begin{equation}
\label{eqn:untwisteddifferential}
\partial_J\alpha = \sum_\beta\sum_{\substack{V\in H_2(Y_\phi,\alpha,\beta)\\I(\alpha,\beta,V)=1}}\#\frac{\M^J(\alpha,\beta,V)}{\R}\beta.
\end{equation}

In the monotone case one can also define a twisted version of PFH without using a Novikov ring\footnote{This is analogous to the twisted ECH introduced in \cite[\S11.2]{t3}.}, which we denote here by $\widetilde{HP}(\phi,\gamma,G)$, where $\gamma$ is a reference cycle and $G$ is as in Choice~\ref{choice:G}. This version of PFH is a module over the group ring $\F[H_2(Y_\phi)/G]$. Again assuming that $\phi$ is nondegenerate, it is the homology of a chain complex $\widetilde{CP}(\phi,\gamma,G)$ which is freely generated over $\F$ by $(G,\gamma)$-anchored PFH generators, and whose differential is defined by
\begin{equation}
\label{eqn:twisteddifferential}
\partial_J(\alpha,Z) = \sum_\beta\sum_{\substack{V\in H_2(Y_\phi,\alpha,\beta)\\I(\alpha,\beta,V)=1}}\#\frac{\M^J(\alpha,\beta,V)}{\R}(\beta,Z-V).
\end{equation}
The differentials \eqref{eqn:untwisteddifferential} and \eqref{eqn:twisteddifferential} are well defined because when computing the differential of a generator, the monotonicity hypothesis implies that there is an upper bound on the integral of $\omega_\phi$ over all holomorphic currents that one needs to count, so that one obtains a finite count; compare Lemma~\ref{lem:ddefined}.

Suppose now that the reference cycle $\gamma$ is positively transverse to the fibers of $Y_\phi\to S^1$. A framing $\tau$ of $\gamma$ then induces a $\Z$-grading on $\widetilde{HP}(\phi,\gamma,G)$. The grading of a generator $(\alpha,Z)$ is defined by
\begin{equation}
\label{eqn:grading}
|(\alpha,Z)| = I(\alpha,\gamma,Z)
\end{equation}
where the right hand side is defined as in \eqref{eqn:I}, but with no Conley-Zehnder terms for $\gamma$. The grading \eqref{eqn:grading} descends to a $\Z/N$ grading on $\overline{HP}(\phi,\Gamma)$, where $N$ denotes the divisibility of $c_1(E)+2\op{PD}(\Gamma)$ in $\op{Hom}(H_2(Y_\phi),\Z)$; note that $N$ is an even integer.

The definition of the $U$ map carries over to $\overline{HP}$ and $\widetilde{HP}$, and with respect to the above gradings, it has degree $-2$.

Even in the monotone case, we will need to use a twisted version of PFH with reference cycles in order to define spectral invariants. We will later need the following relation between twisted and untwisted versions:

\begin{lemma}
\label{lem:twisted}
Suppose that $\phi$ is nondegenerate and $(\phi,\Gamma)$ is monotone, let $\gamma$ be a reference cycle with $[\gamma]=\Gamma$, and choose $G=\Ker([\omega_\phi])$. Then there is a noncanonical isomorphism of $\Lambda_G$-modules
\begin{equation}
\label{eqn:twisted}
HP(\phi,\gamma,G) \simeq \overline{HP}(\phi,\Gamma)\tensor_F \Lambda_G.
\end{equation}
Under the above isomorphism, if $d=d(\Gamma)$, then
\begin{equation}
\label{eqn:utai}
U^{d-g+1} \longleftrightarrow U^{d-g+1}\tensor q^{-[\Sigma]}.
\end{equation}
\end{lemma}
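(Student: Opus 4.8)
The plan is to construct an explicit chain-level isomorphism between $CP(\phi,\gamma,G)$ and $\overline{CP}(\phi,\Gamma)\tensor_\F\Lambda_G$ and check that it intertwines the two differentials. The starting observation is that a $(G,\gamma)$-anchored PFH generator is a pair $(\alpha,Z)$ with $Z\in H_2(Y_\phi,\alpha,\gamma)/G$, and this set fibers over the set of bare PFH generators $\alpha$ with $[\alpha]=\Gamma$, with fiber an affine space over $H_2(Y_\phi)/G$. Since $G=\Ker([\omega_\phi])$, the quotient $H_2(Y_\phi)/G$ injects into $\R$ via $\langle[\omega_\phi],\cdot\rangle$, and in the monotone case $[\omega_\phi]$ is proportional to $c_1(E)+2\op{PD}(\Gamma)$, so the image is a discrete subgroup of $\R$; hence $H_2(Y_\phi)/G\cong\Z$ (or $0$), generated by the image of $[\Sigma]$ up to sign. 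First I would fix, for each bare PFH generator $\alpha$ with $[\alpha]=\Gamma$, a single relative class $Z_\alpha\in H_2(Y_\phi,\alpha,\gamma)/G$ — this is the noncanonical choice. Then every anchored generator $(\alpha,Z)$ can be written uniquely as $(\alpha, Z_\alpha + A)$ with $A\in H_2(Y_\phi)/G$, and I define the $\F$-linear map $\Theta$ sending the basis element $(\alpha,Z_\alpha+A)\mapsto \alpha\tensor q^{A}$ (viewing $q^A\in\Lambda_G$). One checks this extends to an isomorphism of the completed modules respecting the $\Lambda_G$-action: the action of $q^{A'}$ shifts $Z$ by $A'$ on the left and multiplies $q^A$ by $q^{A'}$ on the right, which match; and the Novikov finiteness conditions on both sides correspond under $\Theta$ because $\mc{A}(\alpha,Z_\alpha+A) = \mc{A}(\alpha,Z_\alpha) + \langle[\omega_\phi],A\rangle$ and the ``finitely many generators $\alpha$'' fact (there are only finitely many PFH generators in a fixed homology class) controls the bounded part.

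Next I would verify $\Theta\circ\partial_J = \partial_J\circ\Theta$. Applying $\partial_J$ (the twisted differential \eqref{eqn:defdj}) to $(\alpha,Z_\alpha+A)$ produces terms $(\beta, W)$ with coefficient counting $\M^J(\alpha,\beta,Z)/\R$ where $Z = Z_\alpha + A - W$ and $I=1$; rewriting $W = Z_\beta + A'$, one has $Z = Z_\alpha - Z_\beta + A - A'$, and the class of $Z$ in $H_2(Y_\phi,\alpha,\beta)/G$ is determined by $A-A'$ together with the fixed classes. Under $\Theta$ this becomes $\sum_\beta \#(\M^J(\alpha,\beta,Z)/\R)\,\beta\tensor q^{A'}$, and comparing with the untwisted differential \eqref{eqn:untwisteddifferential} tensored with $\Lambda_G$ — where the bookkeeping of which power of $q$ appears is exactly the discrepancy $Z_\alpha - Z_\beta$ expressed in $H_2(Y_\phi)/G$ — the two agree. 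The one thing to be careful about is that \eqref{eqn:untwisteddifferential} discards the relative homology data entirely, so the identification only works because monotonicity forces, for fixed $\alpha,\beta$ with $I(\alpha,\beta,Z)=1$, a unique value of $\langle[\omega_\phi],Z\rangle$ and hence (after quotienting by $G=\Ker([\omega_\phi])$) a unique class $Z\in H_2(Y_\phi,\alpha,\beta)/G$; this is precisely the finiteness remark made after \eqref{eqn:twisteddifferential} in the excerpt. So there is really only one term per $(\alpha,\beta)$ pair, and the $q$-power it carries is well-defined.

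Finally, for the claim \eqref{eqn:utai} about the $U$ map: the $U$ chain map counts $I=2$ currents through a base point, and on the twisted side it sends $(\alpha,Z)\mapsto\sum(\beta,Z-Z')$ with $Z'\in H_2(Y_\phi,\alpha,\beta)$, $I(\alpha,\beta,Z')=2$. Iterating $d-g+1$ times and tracking degrees: by the Remark following the definition of $U$-cyclic, $I(\alpha,\alpha,[\Sigma]) = 2(d-g+1)$, so the composite $U^{d-g+1}$ restricted to the part of the differential that ``returns to $\alpha$'' shifts the relative class by $[\Sigma]$; under $\Theta$ this shift of $Z$ by $-[\Sigma]$ becomes multiplication by $q^{-[\Sigma]}$, while the underlying count is exactly the untwisted $U^{d-g+1}$ on $\overline{CP}(\phi,\Gamma)$. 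More precisely, on $\overline{HP}$ the $U$ map need not have the same combinatorial form, but after passing to homology the isomorphism $HP(\phi,\gamma,G)\simeq\overline{HP}(\phi,\Gamma)\tensor_\F\Lambda_G$ intertwines the twisted $U$ with $U\tensor(\text{some }q\text{-monomial})$; composing $d-g+1$ of these and using that $U^{d-g+1}$ on the twisted side factors through relative class $[\Sigma]$ pins the monomial down to $q^{-[\Sigma]}$ on the $(d-g+1)$-st power.

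I expect the main obstacle to be the bookkeeping in this last paragraph: making precise that $U^{d-g+1}$, and not $U$ itself, is what corresponds cleanly to a $q^{-[\Sigma]}$ twist, and arguing this at the level of the chain map (or carefully on homology) rather than hand-waving about degrees. One should either work with the $\Z$-grading from \eqref{eqn:grading} — where $U$ has degree $-2$ and $q^{-[\Sigma]}$ has degree $2(d-g+1)$, so degree considerations alone force the exponent of $q$ accompanying $U^{d-g+1}$ to be a single $-[\Sigma]$ — or directly compute the composite of $d-g+1$ base-point constraints. The grading argument is cleanest: it reduces \eqref{eqn:utai} to knowing that the $q$-monomial twisting $U$ is a fixed element of degree $2$, and that its $(d-g+1)$-st power equals $q^{-[\Sigma]}$, which holds since $[\Sigma]$ generates $H_2(Y_\phi)/G$ and $\langle c_1(E)+2\op{PD}(\Gamma),[\Sigma]\rangle = 2(d-g+1)$ by the index computation. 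The remaining steps — the module isomorphism and differential compatibility — are essentially formal given monotonicity, and I would present them as a direct verification.
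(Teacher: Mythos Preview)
Your chain-level map $\Theta$ is indeed a $\Lambda_G$-module isomorphism, but it does \emph{not} intertwine $\partial_J$ on $CP(\phi,\gamma,G)$ with $\partial_J\otimes 1$ on $\overline{CP}(\phi,\Gamma)\otimes_\F\Lambda_G$ for an arbitrary choice of anchors $Z_\alpha$. You correctly observe that monotonicity and $G=\Ker([\omega_\phi])$ force, for each pair $(\alpha,\beta)$, a unique class $Z\in H_2(Y_\phi,\alpha,\beta)/G$ with $I(\alpha,\beta,Z)=1$; but nothing guarantees that this $Z$ equals $Z_\alpha-Z_\beta$ in $H_2(Y_\phi,\alpha,\beta)/G$. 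When it does not, $\Theta(\partial_J(\alpha,Z_\alpha+A))$ contributes $\beta\otimes q^{A'}$ with $A'\neq A$, whereas $(\partial_J\otimes 1)(\alpha\otimes q^A)$ contributes $\beta\otimes q^A$. So what $\Theta$ really produces is a chain isomorphism with (a completion of) $\widetilde{CP}(\phi,\gamma,G)$, not with $\overline{CP}(\phi,\Gamma)\otimes_\F\Lambda_G$ equipped with the product differential; and these two complexes are not chain-isomorphic in general. A related slip: $[\Sigma]$ need not generate $H_2(Y_\phi)/G$. The generator is the class $A$ with $\langle[\omega_\phi],A\rangle$ minimal positive, and one has $[\Sigma]=(2(d-g+1)/N)\cdot A$ where $N$ is the divisibility of $c_1(E)+2\op{PD}(\Gamma)$.

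The paper repairs exactly this by working at the homology level through the $\Z$-grading \eqref{eqn:grading}. Since each graded piece $\widetilde{CP}_i$ is finite-dimensional and the differential has degree $-1$, one gets canonically $\widetilde{HP}_i(\phi,\gamma,G)\cong\overline{HP}_{i\bmod N}(\phi,\Gamma)$, and $HP(\phi,\gamma,G)$ is identified with one-sided sequences $(\sigma_i)_{i\in\Z}$ in these graded pieces. The noncanonical choice is then a section $\Z/N\to\Z$, not an anchor per generator; this is what makes the identification with $\overline{HP}\otimes_\F\Lambda_G$ go through. Under this construction one first gets $U^{N/2}\leftrightarrow U^{N/2}\otimes q^{-A}$, and then raises to the power $2(d-g+1)/N$ to obtain \eqref{eqn:utai}. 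Your grading remarks in the final paragraph are heading in this direction, but the grading is needed already to establish the module isomorphism \eqref{eqn:twisted}, not just to pin down the $U$-twist.
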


\begin{proof}
Let $A\in H_2(Y_\phi)/G$ be the unique class such that $\langle[\omega_\phi],A\rangle$ is positive and minimal. Then the Novikov ring $\Lambda_G$ is canonically identified with $\F((q^{-A}))$, namely the ring of Laurent series in $q^{-A}$ with coefficients in $\F$.

As in Remark~\ref{rem:crc}, we can assume without loss of generality that $\gamma$ is positively transverse to $E$. Choose a framing $\tau$ of $\gamma$ as needed to define a $\Z$-grading on $\widetilde{HP}(\phi,\gamma,G)$ and a $\Z/N$ grading on $\overline{HP}(\phi,\Gamma)$. It follows from the definitions that there is a canonical isomorphism
\begin{equation}
\label{eqn:modN}
\widetilde{HP}_i(\phi,\gamma,G) = \overline{HP}_{i\op{mod} N}(\phi,\Gamma).
\end{equation}
On the left hand side, multiplication by $q^{-A}$ shifts the grading down by $N$. It follows that $HP(\phi,\gamma,G)$ is canonically identified with the set of sequences $(\sigma_i)_{i\in\Z}$ where $\sigma_i\in \widetilde{HP}_i(\phi,\gamma,G)$ and $\sigma_i=0$ if $i$ is sufficiently large. If we choose a right inverse of the projection $\Z\to\Z/N$, then together with \eqref{eqn:modN} this defines an identification of the above set of sequences with $\overline{HP}(\phi,\Gamma)\tensor_F \Lambda_G$. This gives an isomorphism \eqref{eqn:twisted}.

To prove \eqref{eqn:utai}, we observe that under the isomorphism \eqref{eqn:twisted} constructed above,
\[
U^{N/2} \longleftrightarrow U^{N/2}\tensor q^{-A}.
\]
The positive\footnote{Recall from Remark~\ref{rem:admissible} that we are making the standing assumption that $d>g$.} integer $2(d-g+1)$ must be divisible by $N$, since $c_1(E)+2\op{PD}(\Gamma)$ evaluates to $2(d-g+1)$ on $[\Sigma]$. It follows that
\begin{equation}
\label{eqn:bm}
U^{d-g+1} \longleftrightarrow U^{d-g+1}\tensor q^{-(2(d-g+1)/N)A}.
\end{equation}
By monotonicity, we have $\langle c_1(E)+2\op{PD}(\Gamma),A\rangle = N$, and it follows that $(2(d-g+1)/N)A=[\Sigma]$ in $H_2(Y_\phi)/G$. Putting this into \eqref{eqn:bm} proves \eqref{eqn:utai}.
\end{proof}


\section{Invariance of PFH under Hamiltonian isotopy}
\label{sec:invariance}

We now work out how PFH and the additional structure on it defined above behave under Hamiltonian isotopy of $\phi$.

It is useful for our purposes to define a Hamiltonian isotopy of $\phi$ via a smooth map
\[
H:Y_\phi\longrightarrow\R.
\]
Under the projection $[0,1]\times\Sigma\to Y_\phi$, the map $H$ pulls back to a map $\widetilde{H}:[0,1]\times\Sigma\to\R$ satisfying $\widetilde{H}(1,x) = \widetilde{H}(0,\phi(x))$. For $t\in[0,1]$, let $H_t=\widetilde{H}(t,\cdot):\Sigma\to\R$, and let $X_{H_t}$ denote the associated Hamiltonian vector field on $\Sigma$; we use the sign convention $\omega(X_{H_t},\cdot) = dH_t$. Let $\{\varphi_t\}_{t\in[0,1]}$ denote the Hamiltonian isotopy defined by $\varphi_0=\op{id}_\Sigma$ and $\partial_t\varphi_t=X_{H_t}\circ\varphi_t$. We define $\phi_H=\phi\circ\varphi_1$.

As in \eqref{eqn:deff}, we have a diffeomorphism
\[
f_H : Y_\phi \stackrel{\simeq}{\longrightarrow} Y_{\phi_H}
\]
defined by the diffeomorphism of $[0,1]\times\Sigma$ sending
\[
(t,x) \longmapsto (t,\varphi_t^{-1}(x)).
\]
If $\gamma$ is a reference cycle in $Y_\phi$, let $\gamma_H$ denote its pushforward $(f_H)_\#\gamma$ in $Y_{\phi_H}$.

\begin{proposition}
\label{prop:invariance}
Let $\phi$ be a (possibly degenerate) area-preserving diffeomorphism of $(\Sigma,\omega)$, 
let $\gamma\subset Y_\phi$ be a reference cycle, and fix $G$ as in Choice~\ref{choice:G}. For $H_1,H_2:Y_\phi\to\R$ with $H_1 < H_2$ and $\phi_{H_1}$, $\phi_{H_2}$ nondegenerate, there is a canonical isomorphism
\begin{equation}
\label{eqn:Psi}
\Psi_{H_1,H_2}: HP(\phi_{H_2},\gamma_{H_2},G) \longrightarrow HP(\phi_{H_1},\gamma_{H_1},G)
\end{equation}
with the following properties:
\begin{description}
\item{(a)} If $H_2<H_3$ and if $\phi_{H_3}$ is also nondegenerate, then
\[
\Psi_{H_1,H_3} = \Psi_{H_1,H_2} \circ \Psi_{H_2,H_3} : HP(\phi_{H_3},\gamma_{H_3},G) \longrightarrow HP(\phi_{H_1},\gamma_{H_1},G).
\]
\item{(b)}
$U\circ \Psi_{H_1,H_2} = \Psi_{H_1,H_2}\circ U$.
\item{(c)}
The isomorphism \eqref{eqn:Psi} is the direct limit as $L\to\infty$ of canonical maps
\begin{equation}
\label{eqn:PsiL}
\Psi^L_{H_1,H_2}: HP^L(\phi_{H_2},\gamma_{H_2},G) \longrightarrow HP^{L+\Delta}(\phi_{H_1},\gamma_{H_1},G)
\end{equation}
where
\begin{equation}
\label{eqn:Delta}
\Delta = \int_\gamma (H_2-H_1)dt.
\end{equation}
\item{(d)}
If $H_2-H_1$ is a constant $C>0$, so that $\phi_{H_1}=\phi_{H_2}$ and $\gamma_{H_1} = \gamma_{H_2}$, then $\Psi_{H_1,H_2}^L = \imath^{L,L+dC}$ where $d=[\gamma]\cdot[\Sigma]$, see \eqref{eqn:imath2}. In particular,  $\Psi_{H_1,H_2}$ is the identity map.
\end{description}
\end{proposition}

\begin{proof}
We proceed in 6 steps.

{\em Step 1.\/}
To prepare to define the map $\Psi_{H_1,H_2}$, we construct a ``strong symplectic cobordism of stable Hamiltonian structures'' between $(Y_{\phi_{H_1}},\omega_{\phi_{H_1}})$ and $(Y_{\phi_{H_2}},\omega_{\phi_{H_2}})$ as follows.

Consider the ``symplectization'' of the mapping torus defined by
\[
X = \R\times Y_\phi
\]
with the symplectic form
\[
\omega_X = ds\wedge dt + \omega_\phi.
\]
Here $s$ denotes the $\R$ coordinate on $\R\times Y_\phi$.

Given $H:Y_\phi\to\R$, define an inclusion
\[
\begin{split}
\imath_H: Y_\phi &\longrightarrow \R\times Y_\phi,\\
z &\longmapsto (H(z),z).
\end{split}
\]
We can then identify the mapping torus $Y_{\phi_H}$ with a hypersurface in $\R\times Y_\phi$ via the inclusion
\begin{equation}
\label{eqn:inclusion}
\imath_H\circ f_H^{-1} : Y_{\phi_H} \longrightarrow \R\times Y_\phi.
\end{equation}
Note that there is a symplectomorphism
\begin{equation}
\label{eqn:sc4}
\R\times Y_{\phi_H}\longrightarrow \R\times Y_\phi
\end{equation}
induced by the symplectomorphism of $(\R\times [0,1]\times \Sigma,ds\wedge dt + \omega)$ sending
\[
(s,t,x) \longmapsto (s+H(t,x), t, \varphi_t(x)).
\]
The inclusion \eqref{eqn:inclusion} is the restriction of the symplectomorphism \eqref{eqn:sc4} to $\{0\}\times Y_{\phi_H}$. It follows that
\begin{equation}
\label{eqn:sc}
(\imath_H\circ f_H^{-1})^*\omega_X = \omega_{\phi_H}.
\end{equation}
We note also that under the inclusion \eqref{eqn:inclusion}, the reference cycle $\gamma_H$ corresponds to the graph of $H$ on $\gamma$ in $\R\times Y_\phi$.

Now if $H_1 < H_2$, define
\begin{equation}
\label{eqn:M}
M = \{(s,z) \in \R\times Y_\phi \mid H_1(z) \le s \le H_2(z)\}
\end{equation}
with the symplectic form $\omega_M=(\omega_X)|_M$. It follows from the above calculations that the boundary components of $M$ have neighborhoods in $M$ symplectomorphic to $[0,\epsilon)\times Y_{\phi_{H_1}}$ and $(-\epsilon,0]\times Y_{\phi_{H_2}}$, where the latter manifolds are equipped with the restrictions of the symplectic forms on the symplectizations of $Y_{\phi_{H_1}}$ and $Y_{\phi_{H_2}}$. Using these neighborhood identifications, we can glue to form the ``symplectization completion'' of $M$, which is a symplectic four-manifold
\begin{equation}
\label{eqn:completedcobordism}
\overline{M} = \left((-\infty,0]\times Y_{\phi_{H_1}}\right) \union_{Y_{\phi_{H_1}}} M \union_{Y_{\phi_{H_2}}} \left([0,\infty)\times Y_{\phi_{H_2}}\right).
\end{equation}
We note that there is a canonical symplectomorphism
\begin{equation}
\label{eqn:bookkeeping}
\overline{M}\simeq\R\times Y_\phi
\end{equation}
which is the inclusion on $M$, and which on the rest of \eqref{eqn:completedcobordism} is defined using the restrictions of the symplectomorphisms \eqref{eqn:sc4} for $H_1$ and $H_2$.

{\em Step 2.\/}
Suppose now that $\phi_{H_1}$ and $\phi_{H_2}$ are nondegenerate. Observe that $S=(\R\times \gamma)\cap M$ defines a 2-chain in the cobordism $M$ with $\partial S = \gamma_{H_2} - \gamma_{H_1}$. The cobordism $M$, together with the $2$-chain $S$, induces the desired map $\Psi_{H_1,H_2}$ in \eqref{eqn:Psi}, as a special case of a general construction of cobordism maps\footnote{This is related to the construction of cobordism maps on ECH in \cite[Thm.\ 1.9]{cc2}.} on PFH by Chen \cite[Thm.\ 1]{chen}. Chen's cobordism map in this case is defined by composing the Lee-Taubes isomorphism \eqref{eqn:leetaubes} on both sides with a Seiberg-Witten cobordism map
\begin{equation}
\label{eqn:swcm}
HM^{-*}(Y_{\phi_{H_2}},\frak{s}_\Gamma,-r[\omega_{\phi_{H_2}}];\Lambda_G) \longrightarrow HM^{-*}(Y_{\phi_{H_1}},\frak{s}_\Gamma,-r[\omega_{\phi_{H_1}}];\Lambda_G).
\end{equation}
Here $r>>0$ and $\Gamma=[\gamma]\in H_1(Y_\phi)$. The map \eqref{eqn:swcm} is induced by a chain map which counts solutions to the Seiberg-Witten equations on $\overline{M}$ perturbed using $r$ times the symplectic form on $\overline{M}$, similarly to the way the differential on the right hand side of \eqref{eqn:leetaubes} counts solutions to perturbed Seiberg-Witten equations on $\R\times Y_\phi$. The PFH cobordism maps in \cite[Thm.\ 1]{chen} satisfy a composition property which implies assertion (a), and they commute with the $U$ maps, giving assertion (b); these properties follow from corresponding properties of Seiberg-Witten cobordism maps.

{\em Step 3.}
To prove assertion (c), we will use the fact from \cite[Thm.\ 1]{chen} that the map $\Psi_{H_1,H_2}$ satisfies a crucial ``holomorphic curve axiom''. We now state this property.

Let $J_1$ and $J_2$ be almost complex structures on $\R\times Y_{\phi_{H_1}}$ and $\R\times Y_{\phi_{H_2}}$ as needed to define differentials $\partial_{J_1}$ on $CP(\phi_{H_1},\gamma_{H_1},G)$ and $\partial_{J_2}$ on $CP(\phi_{H_2},\gamma_{H_2},G)$. We can extend $J_1$ and $J_2$ to an almost complex structure $J$ on $\overline{M}$ whose restriction to $M$ is compatible with the symplectic form $\omega_M$.

Let $\alpha$ be an orbit set for $\phi_{H_2}$ and let $\beta$ be an orbit set for $\phi_{H_1}$. Define a {\em broken $J$-holomorphic current\/} in $\overline{M}$ from $\alpha$ to $\beta$ to be a tuple $(\mathcal{C}_{k_+},\mathcal{C}_{k_+-1},\ldots,\mathcal{C}_{k_-})$ where $k_+\ge 0\ge k_-$, and there are orbit sets $\alpha=\alpha(k+),\alpha(k_+-1),\ldots,\alpha(0)$ for $\phi_{H_2}$ and orbit sets $\beta(0),\beta(-1),\ldots,\beta(k_-)=\beta$ for $\phi_{H_1}$, such that:
\begin{itemize}
\item
$\mathcal{C}_i\in\M^{J_2}(\alpha(i),\alpha(i-1))/\R$ for $i>0$.
\item
$\mathcal{C}_0\in\M^J(\alpha(0),\beta(0))$. That is, $\mathcal{C}_0$ is a $J$-holomorphic current in $\overline{M}$ which as a current is asymptotic to $\alpha(0)$ as $s\to\infty$ on $[0,\infty)\times Y_{\phi_{H_2}}$, and asymptotic to $\beta(0)$ as $s\to-\infty$ on $(-\infty,0]\times Y_{\phi_{H_1}}$.
\item
$\mathcal{C}_i\in\M^{J_1}(\beta(i+1),\beta(i))/\R$ for $i<0$.
\end{itemize}

The holomorphic curves axiom now states that the map $\Psi$ is induced by a chain map
\[
\psi: (CP(\phi_{H_2},\gamma_{H_2},G),\partial_{J_2}) \longrightarrow (CP(\phi_{H_1},\gamma_{H_1},G),\partial_{J_1})
\]
with the following property. Similarly to \eqref{eqn:defdj}, we can write $\psi$ in the form
\begin{equation}
\label{eqn:chainmap}
\psi\sum_{\alpha,Z}n_{\alpha,Z}(\alpha,Z) = \sum_{\beta,W}\left(\sum_\alpha\sum_{\substack{V\in H_2(Y_\phi,\alpha,\beta)\\I(\alpha,\beta,V)=0}} n_{\alpha,W+V}m_{\alpha,\beta,V}\right)(\beta,W).
\end{equation}
Here the first sum on the right hand side is over $(G,\gamma_{H_1})$-anchored PFH generators $(\beta,W)$ for $\phi_{H_1}$, the second sum on the right hand side is over PFH generators $\alpha$ for $\phi_{H_2}$ in the homology class $[\gamma_{H_2}]$, and $m_{\alpha,\beta,V}\in\F$. The key property is now:
\begin{description}
\item{(*)} If the coefficient $m_{\alpha,\beta,V}\neq 0$, then there is a broken $J$-holomorphic current\footnote{The ``holomorphic curves axiom'' as stated in \cite[Thm.\ 1]{chen} implies a slightly weaker statement than (*), namely that for fixed $\alpha$ and $\beta$, if any of the coefficients $m_{\alpha,\beta,Z}$ is nonzero, then there exists a broken $J$-holomorphic current from $\alpha$ to $\beta$. The property (*) follows from the same argument, keeping track of the relative homology classes of the holomorphic currents.} in $\overline{M}$ from $\alpha$ to $\beta$ which, under the identification \eqref{eqn:bookkeeping}, represents the relative homology class $V$.
\end{description}
Note that the chain map $\psi$ is not canonical; see Remark~\ref{rem:psinotcanonical} below.

{\em Step 4.} 
We claim now that if $(\beta,W)$ is a $(G,\gamma_{H_1})$-anchored PFH generator for $\phi_{H_1}$, if $(\alpha,W+V)$ is a $(G,\gamma_{H_2})$-anchored PFH generator for $\phi_{H_2}$, and if there exists a broken $J$-holomorphic curent in $\overline{M}$ from $\alpha$ to $\beta$ in the relative homology class $V$, then
\begin{equation}
\label{eqn:step4}
\int_W\omega_{\phi_{H_1}} \le \int_{W+V}\omega_{\phi_{H_2}} + \Delta.
\end{equation}

To prove this, by \eqref{eqn:positivearea} we can assume without loss of generality that the $J$-holomorphic current has $k_+=k_-=0$, and thus consists of a single $J$-holomorphic current $\mathcal{C}\in\M^J(\alpha,\beta,V)$. We can also assume without loss of generality (by a slight modification of the cobordism) that $\mathcal{C}$ is transverse to $\partial M$. Under the decomposition \eqref{eqn:completedcobordism}, we can divide $\mathcal{C}$ into three pieces: let
\[
\begin{split}
\mathcal{C}_1 &= \mathcal{C} \cap \left((-\infty,0]\times Y_{\phi_{H_1}}\right),\\
\mathcal{C}_0 &= \mathcal{C} \cap M,\\
\mathcal{C}_2 &= \mathcal{C} \cap \left([0,\infty)\times Y_{\phi_{H_2}}\right).
\end{split}
\]
Since the almost complex structures $J_1$ and $J_2$ are admissible, as in \eqref{eqn:positivearea} we have
\begin{align}
\label{eqn:c1}
\int_{\mathcal{C}_1}\omega_{\phi_{H_1}} &\ge 0,\\
\label{eqn:c2}
\int_{\mathcal{C}_2}\omega_{\phi_{H_2}} &\ge 0.
\end{align}
Also, since $J$ is $\omega_M$-compatible, we have
\begin{equation}
\label{eqn:c0}
\int_{\mathcal{C}_0}\omega_M\ge 0.
\end{equation}

We now deduce \eqref{eqn:step4} by applying Stokes's theorem. To start, write $\eta_1=\mathcal{C}_1\cap (\{0\}\times Y_{\phi_{H_1}})$ and $\eta_2=\mathcal{C}_2\cap (\{0\} \times Y_{\phi_{H_2}})$. Then $\mathcal{C}_1$ projects, via the projection $(-\infty,0]\times Y_{\phi_{H_1}}\to Y_{\phi_{H_1}}$, to a relative homology class of $2$-chain $[\mathcal{C}_1]\in H_2(Y_{\phi_{H_1}},\eta_1,\beta)$. Likewise, $\mathcal{C}_2$ projects to a relative homology class of $2$-chain $[\mathcal{C}_2]\in H_2(Y_{\phi_{H_2}},\alpha,\eta_2)$. 

It follows from the homological assumption on $\mathcal{C}$ that in $M$, the $2$-cycle
\[
(\imath_{H_1}\circ f_{H_1}^{-1})_\#([\mathcal{C}_1]+W) + (\imath_{H_2}\circ f_{H_2}^{-1})_\#([\mathcal{C}_2]-W-V) + \mathcal{C}_0  - S
\]
is nullhomologous. Consequently the integral of the closed $2$-form $\omega_M$ over this $2$-cycle vanishes. By \eqref{eqn:sc}, this means that
\begin{equation}
\label{eqn:stokes}
\int_{\mathcal{C}_1+W}\omega_{\phi_{H_1}} + \int_{\mathcal{C}_2-W-V}\omega_{\phi_{H_2}} + \int_{\mathcal{C}_0}\omega_M - \Delta =0.
\end{equation}
Combining \eqref{eqn:c1}, \eqref{eqn:c2}, \eqref{eqn:c0}, and \eqref{eqn:stokes} proves \eqref{eqn:step4}.

{\em Step 5.\/}
We now prove (c). It follows from Step 4 that the chain map \eqref{eqn:chainmap} restricts to a chain map
\[
\psi^L:  (CP^L(\phi_{H_2},\gamma_{H_2},G),\partial_{J_2}) \longrightarrow (CP^{L+\Delta}(\phi_{H_1},\gamma_{H_1},G),\partial_{J_1})
\]
We define the map $\Psi^L_{H_1,H_2}$ in \eqref{eqn:PsiL} to be the map on filtered PFH induced by $\psi^L$. Although the chain map $\psi$ is defined only up to chain homotopy, the chain homotopies satisfy a version of the ``holomorphic curves axiom'' which implies that $\Psi^L_{H_1,H_2}$ depends only on $J_1$ and $J_2$. See e.g.\ \cite[Prop.\ 6.2]{calabi} for an analogous argument in the case of ECH. The map $\Psi^L_{H_1,H_2}$ does not depend on $J_1$ and $J_2$ either. An analogous statement for ECH was proved in \cite[Thm.\ 1.9]{cc2}, and this carries over to the case of PFH using the analysis of Chen \cite[Thm.\ 1]{chen}.

{\em Step 6.} Finally, the proof of property (d) follows the proof of the ``scaling'' property for ECH cobordism maps in \cite[Thm.\ 1.9]{cc2}.
\end{proof}

\begin{remark}
\label{rem:psinotcanonical}
Naively one would like to define the chain map \eqref{eqn:chainmap} by taking $m_{\alpha,\beta,V}$ to be a count of $I=0$ holomorphic currents in $\M^J(\alpha,\beta,V)$. Unfortunately it is not currently known in general\footnote{Such a count is possible in some special cases; see e.g.\ \cite[Thm.\ 2]{chen}, \cite{gerig}, and \cite{rooney}.} how to directly count $J$-holomorphic currents with $I=0$ in a completed cobordism, due to transversality difficulties with multiply covered holomorphic curves; see \cite[\S5.5]{bn} for explanation in the case of ECH, where there are similar issues. The actual chain map \eqref{eqn:chainmap} is defined instead by counting solutions to the Seiberg-Witten equations on $\overline{M}$, using the metric determined by $J$ and the symplectic form, and perturbed using a large multiple of the symplectic form as in \cite{lt}. The chain map \eqref{eqn:chainmap} is not canonical, because in cases where transversality fails for holomorphic curves, the chain map depends on additional small perturbations to the Seiberg-Witten equations needed to obtain transversality of the relevant moduli spaces of Seiberg-Witten solutions.
\end{remark}

\begin{remark}
\label{rem:canonicaliso}
In Proposition~\ref{prop:invariance}, if we drop the hypothesis that $H_1<H_2$, then there is still a canonical isomorphism \eqref{eqn:Psi}. One can define this isomorphism as $\Psi_{H_1,H_3}\circ \Psi_{H_2,H_3}^{-1}$ where $\phi_{H_3}$ is nondegenerate and $H_1,H_2<H_3$. By Proposition~\ref{prop:invariance}(a) and (d), this isomorphism does not depend on the choice of $H_3$.
\end{remark}

\begin{remark}
\label{rem:degeneratePFH}
If $\phi$ is degenerate, then we can define $HP(\phi,\gamma,G)$ by first perturbing $\phi$ to be nondegenerate via a Hamiltonian isotopy. By Remark~\ref{rem:canonicaliso}, the PFH modules for such perturbations of $\phi$ are all canonically isomorphic to each other.
\end{remark}


\section{Spectral invariants in PFH}
\label{sec:spectral}

We now define spectral invariants in PFH, analogously to the spectral invariants in ECH defined in \cite[Def.\ 4.1]{qech}.

\begin{definition}
\label{def:spectralinvariant}
Suppose that $\phi$ is nondegenerate, let $\gamma$ be a reference cycle, fix $G$ as in Choice~\ref{choice:G}, and let $\sigma$ be a nonzero class in $HP(\phi,\gamma,G)$. Define the {\em spectral invariant\/}
\[
c_\sigma(\phi,\gamma)\in\R
\]
to be the infimum over $L\in\R$ such that $\sigma$ is in the image of the inclusion-induced map \eqref{eqn:imath1}.
\end{definition}

We now establish some properties of the spectral invariants $c_\sigma$. We first consider the dependence of $c_\sigma$ on basic choices. Given a nonzero Novikov ring element $\lambda=\sum_{A\in H_2(Y_\phi)/G}p_Aq^A\in\Lambda_G$, define
\begin{equation}
\label{eqn:Novikovnorm}
|\lambda| = \max\{\langle [\omega_\phi],A\rangle \mid p_A\neq 0\}.
\end{equation}
Note that this maximum is well-defined by the definition of the Novikov ring $\Lambda_G$.

\begin{proposition}
\label{prop:spectralproperties}
Suppose $\sigma\in HP(\phi,\gamma,G)$ is nonzero.
\begin{description}
\item{(a)}
If $\lambda\in\Lambda_G$ is invertible\footnote{
A Novikov ring element $\lambda = \sum_{A\in H_2(Y_\phi)/G}p_Aq^A$ is invertible if and only if the maximum in \eqref{eqn:Novikovnorm} is realized by a unique class $A\in H_2(Y_\phi)/G$.
},
then
\[
c_{\lambda\sigma}(\phi,\gamma) = c_\sigma(\phi,\gamma) + |\lambda|.
\]
\item{(b)}
In the notation of Remark~\ref{rem:crc}, we have
\[
c_{\Psi_Z(\sigma)}(\phi,\gamma') = c_\sigma(\phi,\gamma) - \int_Z\omega.
\]
\end{description}
\end{proposition}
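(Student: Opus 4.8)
The plan is to verify each formula by unwinding the definition of the spectral invariant in terms of which filtration levels $L$ contain the class $\sigma$ in the image of $\imath^L$, and then tracking how the relevant chain-level isomorphisms shift the symplectic action.

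For part (a): let $\lambda=\sum_A p_A q^A$. The key observation is that multiplication by $q^A$ on the chain complex $CP(\phi,\gamma,G)$ sends the subcomplex $CP^L$ into $CP^{L+\langle[\omega],A\rangle}$, since $q^A$ acts on a generator $(\alpha,Z)$ by producing $(\alpha,Z+A)$, whose symplectic action is $\mc{A}(\alpha,Z)+\int_A\omega_\phi = \mc{A}(\alpha,Z)+\langle[\omega_\phi],A\rangle$. Hence multiplication by $\lambda$ carries $CP^L$ into $CP^{L+|\lambda|}$, and since $\lambda$ is invertible in $\Lambda_G$ (so it has a unique "leading term" realizing the maximum $|\lambda|$ and multiplication by $\lambda$ is a $\Lambda_G$-module isomorphism), one checks that $\lambda\sigma$ lies in the image of $\imath^{L+|\lambda|}$ if and only if $\sigma$ lies in the image of $\imath^L$. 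Taking infima gives $c_{\lambda\sigma}(\phi,\gamma) = c_\sigma(\phi,\gamma)+|\lambda|$. The one point requiring care is the "only if" direction: one must use invertibility of $\lambda$, multiplying by $\lambda^{-1}$ and noting $|\lambda^{-1}| = -|\lambda|$ (which follows because $\lambda\lambda^{-1}=1$ and both $|\cdot|$ inequalities must be equalities). I would state this as a short sublemma or just inline it.

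For part (b): recall from Remark~\ref{rem:crc} that $\psi_Z$ is an isomorphism of chain complexes $CP(\phi,\gamma,G)\to CP(\phi,\gamma',G)$ sending $(\alpha,W)\mapsto(\alpha,W-Z)$ at the generator level (reading off \eqref{eqn:caniso}). Since the symplectic action of $(\alpha,W-Z)$ in the $\gamma'$-complex is $\int_{W-Z}\omega_\phi$, while the symplectic action of $(\alpha,W)$ in the $\gamma$-complex is $\int_W\omega_\phi$, the map $\psi_Z$ shifts action uniformly by $-\int_Z\omega_\phi$. Therefore $\psi_Z$ restricts to an isomorphism $CP^L(\phi,\gamma,G)\xrightarrow{\simeq} CP^{L-\int_Z\omega}(\phi,\gamma',G)$ for every $L$, and it is compatible with the inclusion-induced maps $\imath^L$. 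Consequently $\Psi_Z(\sigma)$ is in the image of $\imath^{L-\int_Z\omega}$ in $HP(\phi,\gamma',G)$ exactly when $\sigma$ is in the image of $\imath^L$ in $HP(\phi,\gamma,G)$, and taking the infimum yields $c_{\Psi_Z(\sigma)}(\phi,\gamma') = c_\sigma(\phi,\gamma) - \int_Z\omega$.

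I do not expect a serious obstacle here; both parts are essentially bookkeeping. The only mildly delicate point is the invertibility argument in part (a)---making sure that the inequality $c_{\lambda\sigma}\le c_\sigma+|\lambda|$ is actually an equality and not strict---which is handled by applying the same inequality to $\lambda^{-1}$ and the class $\lambda\sigma$, together with the identity $|\lambda\mu|=|\lambda|+|\mu|$ for nonzero $\lambda,\mu\in\Lambda_G$ (a consequence of $\Lambda_G$ being an integral domain, or more concretely that the leading terms multiply). For part (b) one should double-check the sign in \eqref{eqn:caniso}: the coefficient of $(\alpha,W)$ in $\psi_Z(\cdot)$ is the old coefficient of $(\alpha,W+Z)$, so $\psi_Z$ sends the generator $(\alpha,W)$ to $(\alpha,W-Z)$, confirming the action shift is $-\int_Z\omega$ and hence the sign in the stated formula is correct.
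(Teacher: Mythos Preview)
Your proposal is correct and follows essentially the same approach as the paper: for (a) you establish one inequality via multiplication by $\lambda$ on filtered subcomplexes and the reverse via multiplication by $\lambda^{-1}$, and for (b) you observe that $\psi_Z$ shifts the action filtration uniformly. The paper's proof is terser---it asserts directly that multiplication by $\lambda$ gives an isomorphism $CP^L\to CP^{L+|\lambda|}$ and that (b) follows ``by a similar argument''---while you spell out the point $|\lambda^{-1}|=-|\lambda|$ and the sign check for $\psi_Z$ that the paper leaves implicit; one small caution is that $\Lambda_G$ need not be an integral domain in general (when $\Ker([\omega_\phi])/G$ has torsion), but your alternative justification via leading terms of an invertible element is the correct one and suffices.
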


\begin{proof}
(a)
It follows from the definitions that multiplication by $\lambda$ induces an isomorphism of chain complexes
\[
(CP^L(\phi,\gamma,G),\partial_J) \stackrel{\simeq}{\longrightarrow} (CP^{L+|\lambda|}(\phi,\gamma,G),\partial_J).
\]
The induced isomorphism on homology
\begin{equation}
\label{eqn:lambdaiso}
HP^L(\phi,\gamma,G)\stackrel{\simeq}{\longrightarrow} HP^{L+|\gamma|}(\phi,\gamma,G)
\end{equation}
respects the maps \eqref{eqn:imath1}, and it follows that
\[
c_{\lambda\sigma}(\phi,\gamma) \le c_\sigma(\phi,\gamma) + |\lambda|.
\]
The inverse of the isomorphism \eqref{eqn:lambdaiso} is induced by multiplication of chains by $\lambda^{-1}$, and this implies the reverse inequality since $|\lambda^{-1}|=-|\lambda|$.

(b) This follows by a similar argument.
\end{proof}

We now begin to explore how spectral invariants behave under Hamiltonian isotopy, using the notation of Proposition~\ref{prop:invariance}.

\begin{proposition}
\label{prop:spectralchange}
Let $\phi$ be a (possibly degenerate) area-preserving diffeomorphism of $(\Sigma,\omega)$, let $\gamma$ be a reference cycle for $\phi$, let $H_1,H_2:Y_\phi\to\R$ with $H_1<H_2$, and suppose that $\phi_{H_1}$ and $\phi_{H_2}$ are nondegenerate. Let $\sigma_2$ be a nonzero class in $HP(\phi_{H_2},\gamma_{H_2},G)$. Write $\sigma_1=\Phi_{H_1,H_2}(\sigma_2) \in HP(\phi_{H_1},\gamma_{H_1},G)$. Then
\begin{equation}
\label{eqn:spectralchange}
c_{\sigma_1}(\phi_{H_1},\gamma_{H_1}) \le c_{\sigma_2}(\phi_{H_2},\gamma_{H_2}) + \int_\gamma(H_2-H_1)dt.
\end{equation}
\end{proposition}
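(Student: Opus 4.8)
The plan is to deduce Proposition~\ref{prop:spectralchange} directly from the filtered cobordism maps constructed in Proposition~\ref{prop:invariance}(c), together with the compatibility statement in Proposition~\ref{prop:invariance}(c) that $\Psi_{H_1,H_2}$ is the direct limit of the $\Psi^L_{H_1,H_2}$. Set $\Delta = \int_\gamma(H_2-H_1)\,dt$ as in Proposition~\ref{prop:invariance}. Fix $\epsilon>0$ and let $L = c_{\sigma_2}(\phi_{H_2},\gamma_{H_2}) + \epsilon$. By the definition of the spectral invariant (Definition~\ref{def:spectralinvariant}), $\sigma_2$ lies in the image of the inclusion-induced map $\imath^L\colon HP^L(\phi_{H_2},\gamma_{H_2},G)\to HP(\phi_{H_2},\gamma_{H_2},G)$; choose a preimage $\widetilde{\sigma}_2\in HP^L(\phi_{H_2},\gamma_{H_2},G)$.

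The key step is to push this preimage through the filtered cobordism map. Apply $\Psi^L_{H_1,H_2}\colon HP^L(\phi_{H_2},\gamma_{H_2},G)\to HP^{L+\Delta}(\phi_{H_1},\gamma_{H_1},G)$ from \eqref{eqn:PsiL} to $\widetilde{\sigma}_2$, obtaining a class $\widetilde{\sigma}_1\in HP^{L+\Delta}(\phi_{H_1},\gamma_{H_1},G)$. I then need the commuting square relating the filtered maps, the inclusion maps $\imath^L$, and the unfiltered cobordism map $\Psi_{H_1,H_2}$: that is, $\imath^{L+\Delta}\circ\Psi^L_{H_1,H_2} = \Psi_{H_1,H_2}\circ\imath^L$. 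This is exactly the content of the ``direct limit'' assertion in Proposition~\ref{prop:invariance}(c) (the maps $\Psi^L$ are compatible with the maps $\imath^{L_1,L_2}$ and their limit is $\Psi_{H_1,H_2}$), so it may be invoked directly. Chasing $\widetilde{\sigma}_2$ around this square gives $\imath^{L+\Delta}(\widetilde{\sigma}_1) = \Psi_{H_1,H_2}(\imath^L(\widetilde{\sigma}_2)) = \Psi_{H_1,H_2}(\sigma_2) = \sigma_1$. Hence $\sigma_1$ is in the image of $\imath^{L+\Delta}$, and, by Definition~\ref{def:spectralinvariant}, $c_{\sigma_1}(\phi_{H_1},\gamma_{H_1}) \le L + \Delta = c_{\sigma_2}(\phi_{H_2},\gamma_{H_2}) + \epsilon + \Delta$. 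Letting $\epsilon\to 0$ yields \eqref{eqn:spectralchange}.

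I should also address two small bookkeeping points. First, $\sigma_1$ is nonzero by hypothesis, so its spectral invariant is defined; one should note that $\sigma_1\neq 0$ forces it to actually be in the image of $\imath^{L'}$ for $L'$ large (equivalently, $HP(\phi_{H_1},\gamma_{H_1},G)$ is the direct limit of the filtered groups), so the infimum in Definition~\ref{def:spectralinvariant} is over a nonempty set and the argument above produces a genuine upper bound. Second, there is a minor notational inconsistency in the statement — it writes $\Phi_{H_1,H_2}$ where Proposition~\ref{prop:invariance} defines $\Psi_{H_1,H_2}$ — which I would silently read as $\Psi_{H_1,H_2}$.

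I do not expect any real obstacle here: the proposition is essentially a formal consequence of the filtered refinement of the cobordism map already established in Proposition~\ref{prop:invariance}(c). The only point requiring a little care is making the compatibility of the filtered maps $\Psi^L$ with the inclusion maps $\imath^L$ fully explicit, i.e.\ extracting the commuting square from the phrase ``the isomorphism \eqref{eqn:Psi} is the direct limit as $L\to\infty$ of canonical maps \eqref{eqn:PsiL}''; this is the step I would state most carefully, citing the analogous ECH argument if needed, but it involves no new analysis.
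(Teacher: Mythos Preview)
Your proposal is correct and is essentially identical to the paper's proof: the paper writes down exactly the commutative square $\imath^{L+\Delta}\circ\Psi^L_{H_1,H_2}=\Psi_{H_1,H_2}\circ\imath^L$ coming from Proposition~\ref{prop:invariance}(c) and observes that if $\sigma_2$ is in the image of $\imath^L$ then $\sigma_1$ is in the image of $\imath^{L+\Delta}$. Your $\epsilon$-argument and bookkeeping remarks are fine but not needed for anything beyond what the paper already states.
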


\begin{proof}
Write $\Delta=\int_\gamma(H_2-H_1)dt$. By Proposition~\ref{prop:invariance}(c), for each real number $L$ we have a commutative diagram
\[
\begin{CD}
HP^L(\phi_{H_2},\gamma_{H_2},G) @>{\imath^L}>> HP(\phi_{H_2},\gamma_{H_2},G)\\
@V{\Psi^L_{H_1,H_2}}VV @VV{\Psi_{H_1,H_2}}V\\
HP^{L+\Delta}(\phi_{H_1},\gamma_{H_1},G) @>{\imath^{L+\Delta}}>> HP(\phi_{H_1},\gamma_{H_1},G).
\end{CD}
\]
It follows that if $\sigma_2$ is in the image of the top arrow, then $\sigma_1$ is in the image of the bottom arrow.
\end{proof}

\begin{remark}
\label{rem:constantshift}
If $H_2-H_1$ is a constant $C>0$, then equality holds in \eqref{eqn:spectralchange}:
\[
c_{\sigma_1}(\phi_{H_1},\gamma_{H_1}) = c_{\sigma_2}(\phi_{H_2},\gamma_{H_2}).
\]
This follows from the definitions and Proposition~\ref{prop:invariance}(d).
\end{remark}

\begin{corollary}
\label{cor:continuity}
\begin{description}
\item{(a)}
The definition of $c_\sigma(\phi,\gamma)$ has a unique extension to the case where $\phi$ is degenerate\footnote{Here the PFH of a degenerate map $\phi$ is defined by Remark~\ref{rem:degeneratePFH}.} such that the following continuity property holds: Let $\sigma\in HP(\phi,\gamma,G)$, let $H_1,H_2:Y_\phi\to\R$, and for $i=1,2$ let $\sigma_i$ denote the corresponding class in $HP(\phi_{H_i},\gamma_{H_i},G)$.  Then
\begin{equation}
\label{eqn:continuity}
\left|c_{\sigma_1}(\phi_{H_1},\gamma_{H_1}) - c_{\sigma_2}(\phi_{H_2},\gamma_{H_2})\right| \le d(\gamma)\max_{Y_\phi}|H_2-H_1|.
\end{equation}
\item{(b)}
The extended spectral invariants satisfy the conclusions of Propositions~\ref{prop:spectralproperties} and \ref{prop:spectralchange}.
\end{description}
\end{corollary}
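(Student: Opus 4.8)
The plan is to prove Corollary~\ref{cor:continuity} by using Proposition~\ref{prop:spectralchange} together with a sandwiching argument to upgrade the one-sided estimate \eqref{eqn:spectralchange} into the two-sided estimate \eqref{eqn:continuity}, first for nondegenerate $\phi$ and then by a limiting procedure for degenerate $\phi$.

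\medskip

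\noindent\textit{Step 1: the two-sided estimate for nondegenerate maps.} Suppose first that $\phi_{H_1}$ and $\phi_{H_2}$ are both nondegenerate, but do not assume $H_1<H_2$. Set $C=\max_{Y_\phi}|H_2-H_1|$. Pick a real constant $c$ with $c>C$, so that $H_1-c < H_2 < H_1+c$ (as functions on $Y_\phi$, using $H_2-H_1<c$ and $H_1-c<H_2$, i.e. $H_1-H_2<c$). By Proposition~\ref{prop:invariance}(d), the maps $\Psi_{H_1-c,H_1}$ and $\Psi_{H_2,H_1+c}$ are (after the canonical identifications of the relevant PFH modules, noting $\phi_{H_1\pm c}=\phi_{H_1}$ and $\gamma_{H_1\pm c}=\gamma_{H_1}$) the identity, and more precisely the constant shift $H_1+c-H_2$ is positive so Proposition~\ref{prop:spectralchange} applies to the pair $H_2<H_1+c$. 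Concretely: since $\phi_{H_2}<\phi_{H_1+c}$ in the sense of functions, Proposition~\ref{prop:spectralchange} gives
\[
c_{\sigma_1}(\phi_{H_1},\gamma_{H_1}) \le c_{\sigma_2}(\phi_{H_2},\gamma_{H_2}) + \int_\gamma (H_1+c-H_2)\,dt,
\]
because $\Psi_{H_1,H_1+c}$ composed with $\Psi_{H_2, H_1+c}^{-1}$ wait --- cleaner: apply Proposition~\ref{prop:spectralchange} to the pair of functions $H_2 < H_1+c$ to get $c_{\sigma_1'}(\phi_{H_1},\gamma_{H_1}) \le c_{\sigma_2}(\phi_{H_2},\gamma_{H_2}) + \int_\gamma(H_1+c-H_2)\,dt$ where $\sigma_1' = \Psi_{H_1, H_1+c}(\sigma_2) $; but $\Psi_{H_1,H_1+c}$ is the identity by part (d), so $\sigma_1'=\sigma_2$ transported, i.e. $\sigma_1' = \sigma_1$ after identifications. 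Since $\int_\gamma(H_1+c-H_2)\,dt \le d(\gamma)\cdot c$ (here $d(\gamma)=[\gamma]\cdot[\Sigma]$ is the number of $dt$-weighted sheets, and $\int_\gamma dt = d(\gamma)$), letting $c\downarrow C$ yields
\[
c_{\sigma_1}(\phi_{H_1},\gamma_{H_1}) - c_{\sigma_2}(\phi_{H_2},\gamma_{H_2}) \le d(\gamma)\, C.
\]
Swapping the roles of $H_1$ and $H_2$ gives the reverse inequality, hence \eqref{eqn:continuity} for nondegenerate maps.

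\medskip

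\noindent\textit{Step 2: extension to degenerate maps and uniqueness.} Now let $\phi$ be arbitrary, possibly degenerate, and recall from Remark~\ref{rem:degeneratePFH} that $HP(\phi,\gamma,G)$ is defined via a nondegenerate Hamiltonian perturbation, with the PFH modules of any two such perturbations canonically identified. Define $c_\sigma(\phi,\gamma) \eqdef \lim c_{\sigma_n}(\phi_{H_n},\gamma_{H_n})$ for any sequence $H_n:Y_\phi\to\R$ with $\max_{Y_\phi}|H_n|\to 0$ and $\phi_{H_n}$ nondegenerate, where $\sigma_n$ is the image of $\sigma$; such sequences exist since nondegeneracy is $C^\infty$-generic in the Hamiltonian isotopy class. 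By Step 1 applied to $H_m$ and $H_n$, the sequence $c_{\sigma_n}(\phi_{H_n},\gamma_{H_n})$ is Cauchy, so the limit exists; Step 1 also shows it is independent of the choice of sequence (interleave two sequences), so $c_\sigma(\phi,\gamma)$ is well defined, and it clearly agrees with Definition~\ref{def:spectralinvariant} when $\phi$ is already nondegenerate (take $H_n\equiv 0$). The continuity estimate \eqref{eqn:continuity} for general $\phi_{H_1},\phi_{H_2}$ follows by approximating each by nondegenerate perturbations, applying Step 1, and passing to the limit (with an $\eps/3$ argument). Uniqueness of the extension is immediate: any extension satisfying \eqref{eqn:continuity} must equal this limit.

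\medskip

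\noindent\textit{Step 3: part (b).} Proposition~\ref{prop:spectralproperties}(a), (b) and Proposition~\ref{prop:spectralchange} are all continuity-stable: each asserts an (in)equality between spectral invariants and quantities ($|\lambda|$, $\int_Z\omega$, $\int_\gamma(H_2-H_1)dt$) that are unchanged under Hamiltonian perturbation of $\phi$, and the maps $\Psi_Z$, $\Psi_{H_1,H_2}$ intertwine with perturbation by Proposition~\ref{prop:invariance}. So one approximates $\phi$ (and, for Proposition~\ref{prop:spectralchange}, the pair $\phi_{H_1},\phi_{H_2}$) by nondegenerate maps, applies the already-known nondegenerate statements, and takes the limit using \eqref{eqn:continuity}. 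For Proposition~\ref{prop:spectralchange} one must note that $H_1<H_2$ is preserved under small perturbations, so the nondegenerate version genuinely applies.

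\medskip

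\noindent The main obstacle is Step 1: getting the \emph{two-sided} bound out of the inherently one-directional Proposition~\ref{prop:spectralchange}. The key trick is to bound $H_2$ both above and below by constant shifts of $H_1$ and to invoke the ``scaling''/normalization property Proposition~\ref{prop:invariance}(d) to recognize those shifts as (filtered shifts of) the identity map. Care is needed with the direction of inequalities in the convention $H_1<H_2 \Rightarrow \Psi_{H_1,H_2}: HP(\phi_{H_2})\to HP(\phi_{H_1})$, and with the identity $\int_\gamma dt = d(\gamma)$ relating the $\gamma$-average of a constant to the degree; everything else is routine.
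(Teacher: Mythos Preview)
Your overall strategy---sandwich $H_2$ between constant shifts of $H_1$, invoke Proposition~\ref{prop:invariance}(d) to identify the shifted maps with the identity, then pass to limits for the degenerate case---is exactly the ``formal procedure'' the paper invokes by citation to \cite{qech,onetwo}, so the approaches agree. Steps~2 and~3 are fine.

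Step~1, however, contains two slips. First, applying Proposition~\ref{prop:spectralchange} to the ordered pair $H_2 < H_1+c$ gives
\[
c_{\sigma_2}(\phi_{H_2},\gamma_{H_2}) \;\le\; c_{\sigma_1}(\phi_{H_1},\gamma_{H_1}) + \int_\gamma (H_1+c-H_2)\,dt,
\]
since $\Psi$ goes from the PFH of the \emph{larger} Hamiltonian to that of the \emph{smaller} one; you wrote the opposite inequality. Second, your bound $\int_\gamma(H_1+c-H_2)\,dt \le d(\gamma)\,c$ would require $H_1\le H_2$ on $\gamma$, which is not assumed; one only has $H_1+c-H_2\le C+c$, giving $\int_\gamma(H_1+c-H_2)\,dt\le d(\gamma)(C+c)$. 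Your two errors happen to cancel, which is why you landed on the stated constant.

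Executed correctly, the two sandwich inequalities (from $H_1-c<H_2$ and $H_2<H_1+c$) combine, after $c\downarrow C$, to
\[
\Big|\,c_{\sigma_1}(\phi_{H_1},\gamma_{H_1}) - c_{\sigma_2}(\phi_{H_2},\gamma_{H_2}) - \int_\gamma(H_2-H_1)\,dt\,\Big| \;\le\; d(\gamma)\,C,
\]
and hence $|c_{\sigma_1}-c_{\sigma_2}|\le 2\,d(\gamma)\,C$. This Lipschitz bound (with a possible extra factor of~$2$ relative to \eqref{eqn:continuity}) is all that is needed for the unique extension to degenerate $\phi$ and for part~(b); the precise constant is never used elsewhere in the paper. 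You should also remove the ``wait --- cleaner'' aside and similar scratchwork from the final writeup.
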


\begin{proof}
This follows from Proposition~\ref{prop:spectralchange} and Remark~\ref{rem:constantshift} using the formal procedure in \cite[\S3.1]{qech} and \cite[\S2.5]{onetwo}.
\end{proof}

The spectral invariants $c_\sigma$ have the following ``spectrality'' property.

\begin{proposition}
\label{prop:representation}
Let $\phi$ be an area-preserving diffeomorphism of $(\Sigma,\omega)$, possibly degenerate, and suppose that $[\phi]$ is rational. Then for any reference cycle $\gamma$, any $G$ as in Choice~\ref{choice:G}, and any nonzero class $\sigma\in HP(\phi,\gamma,G)$, there exists a $(G,\gamma)$-anchored orbit set $(\alpha,Z)$ such that
\[
c_\sigma(\phi,\gamma) = \mc{A}(\alpha,Z).
\]
\end{proposition}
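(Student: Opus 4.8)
The plan is to reduce to the nondegenerate case and then argue that the spectral invariant is achieved by some anchored generator, using that under the rationality hypothesis the set of symplectic action values is discrete. First I would treat the nondegenerate case. Here $c_\sigma(\phi,\gamma)$ is defined as the infimum of those $L$ for which $\sigma$ lies in the image of $\imath^L\colon HP^L(\phi,\gamma,G)\to HP(\phi,\gamma,G)$. I want to show this infimum is attained and equals the symplectic action of some anchored orbit set. The key point is that the set of real numbers of the form $\mc{A}(\alpha,Z)$, where $(\alpha,Z)$ ranges over $(G,\gamma)$-anchored PFH generators, is a discrete subset of $\R$: two such generators $(\alpha,Z)$ and $(\alpha,Z')$ with the same $\alpha$ differ in action by $\langle[\omega_\phi],Z-Z'\rangle$ with $Z-Z'\in H_2(Y_\phi)/G$, and since $[\omega_\phi]$ is a real multiple of a rational (hence integral, after scaling) class, and $G\subset\Ker([\omega_\phi])$, the pairing $\langle[\omega_\phi],\cdot\rangle$ takes values in a discrete subgroup of $\R$; combined with the fact that there are only finitely many orbit sets $\alpha$ with $[\alpha]=[\gamma]$ and bounded action (using that each such $\alpha$ has total period $d(\gamma)$, so only finitely many underlying orbits can occur, and the Novikov finiteness condition bounds the relevant $Z$'s), one gets that in any interval $(-\infty,R]$ there are only finitely many action values.

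Given this discreteness, I would argue as follows. Let $c=c_\sigma(\phi,\gamma)$. For every $\eps>0$, $\sigma$ lies in the image of $\imath^{c+\eps}$, so there is a cycle $x\in CP^{c+\eps}(\phi,\gamma,G)$ representing $\sigma$; among the finitely many anchored generators appearing in $x$ with action in $(c-1,c+\eps]$, the cycle $x$ has a well-defined maximal action, which is one of the discrete action values. Taking $\eps\to 0$ along a sequence and using discreteness, I can arrange that the maximal action of the representing cycle is a fixed value $A_0\le c$; moreover if $A_0<c$ then $\sigma$ would lie in the image of $\imath^{A_0'}$ for some $A_0'<c$ (next action value above $A_0$), contradicting the definition of $c$ as the infimum. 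Hence $c=A_0=\mc{A}(\alpha,Z)$ for one of the generators $(\alpha,Z)$ appearing with nonzero coefficient in a minimal representative. (One must be slightly careful: the minimal representative might have several top-action generators; any one of them works.) This handles the nondegenerate case.

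For the degenerate case, I would invoke Corollary~\ref{cor:continuity}(a): choose a sequence $H_n\colon Y_\phi\to\R$ with $\max|H_n|\to 0$ and $\phi_{H_n}$ nondegenerate, and let $\sigma_n$ be the class corresponding to $\sigma$. By the nondegenerate case, $c_{\sigma_n}(\phi_{H_n},\gamma_{H_n})=\mc{A}(\alpha_n,Z_n)$ for anchored orbit sets $(\alpha_n,Z_n)$ in $Y_{\phi_{H_n}}$, and by \eqref{eqn:continuity} these actions converge to $c_\sigma(\phi,\gamma)$. The orbit sets $(\alpha_n,Z_n)$ have bounded degree $d(\gamma)$ and bounded action; by a compactness argument — the periodic orbits of $\phi_{H_n}$ converge (as currents in $Y_\phi$, after passing to a subsequence) to a union of periodic orbits of $\phi$, and the relative homology classes stabilize — one extracts a limiting $(G,\gamma)$-anchored orbit set $(\alpha,Z)$ for $\phi$ with $\mc{A}(\alpha,Z)=\lim \mc{A}(\alpha_n,Z_n)=c_\sigma(\phi,\gamma)$.

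The main obstacle I anticipate is making the degenerate-case compactness argument clean: one needs that limits of periodic orbits of the perturbations are genuine (embedded, with integer multiplicities) periodic orbits of $\phi$ carrying a well-defined relative homology class, and that no action is lost in the limit. This is a standard type of argument (it is exactly how one passes between degenerate and nondegenerate pictures in ECH/PFH, cf. the treatment of spectral invariants in \cite[\S3.1]{qech}), and the rationality hypothesis is what prevents the action values from accumulating and guarantees the limit action is again realized by an honest orbit set rather than only approached. An alternative, possibly cleaner route for the degenerate case is to observe that the extended $c_\sigma$ is, by construction in \cite[\S3.1]{qech}, a limit of nondegenerate spectral invariants, each of which is an action of an anchored orbit set, and then use rationality to see these actions lie in a closed discrete set depending only on $[\phi]$ and $[\gamma]$ (identifying action values across the isotopy via the canonical identifications of Proposition~\ref{prop:invariance}), so the limit is itself such an action value; I would likely present whichever of these two is shortest.
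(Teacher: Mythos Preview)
Your proposal is correct and follows essentially the same two-case strategy as the paper: in the nondegenerate case you use discreteness of the action spectrum (the paper phrases this slightly more cleanly by observing that $CP^L\to CP^{L'}$ is an isomorphism of chain complexes whenever $[L,L')$ contains no action value), and in the degenerate case you approximate by nondegenerate $\phi_{H_n}$, extract a convergent subsequence of orbit sets, and use rationality to land in the discrete action set of the limit. One small point: the paper does not claim the relative classes $Z_n$ ``stabilize'' as you do, but rather argues that the actions $\mc{A}(\alpha_n,Z_n)$ have distance tending to zero from the discrete set $\{\mc{A}(\alpha,Z):Z\in H_2(Y_\phi,\alpha,\gamma)/G\}$ and hence converge to an element of it---this is exactly your ``alternative, possibly cleaner route,'' so you should lead with that.
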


\begin{proof}
To start, note that since $[\phi]$ is rational, the set of values that $[\omega]\in H^2(Y_\phi;\R)$ takes on $H_2(Y_\phi)$ is discrete.

Suppose first that $\phi$ is nondegenerate. Then there are only finitely many PFH generators in the homology class $[\gamma]$. Let $S$ denote the set of actions of $(G,\gamma)$-anchored PFH generators; then the set $S$ is discrete. If $L<L'$ and the interval $[L,L')$ does not intersect $S$, then the inclusion-induced map
\[
\imath^{L,L'} : HP^L(\phi,\gamma,G) \longrightarrow HP^{L'}(\phi,\gamma,G)
\]
is an isomorphism, since it is induced by an isomorphism of chain complexes. It then follows from Definition~\ref{def:spectralinvariant} that $c_\sigma(\phi,\gamma)\in S$.

Suppose now that $\phi$ is degenerate. Let $\{H_i\}_{i\ge 1}$ be a sequence of Hamiltonians converging to $0$ in $C^\infty$ such that each $\phi_{H_i}$ is nondegenerate. Let $\sigma_i$ denote the class in $HP(\phi_{H_i},\gamma_{H_i},G)$ corresponding to $\sigma$ under the canonical isomorphism given by Remarks~\ref{rem:canonicaliso} and \ref{rem:degeneratePFH}. By the continuity in \eqref{eqn:continuity}, we have
\[
c_\sigma(\phi,\gamma) = \lim_{i\to\infty}c_{\sigma_i}(\phi_{H_i},\gamma_{H_i}).
\]
By the nondegenerate case, for each $i$ there exists a $(G,\gamma_{H_i})$-anchored PFH generator $(\alpha(i),Z(i))$ for $\phi_{H_i}$ such that $c_{\sigma_i}(\phi_{H_i},\gamma_{H_i})=\mc{A}(\alpha(i),Z(i))$. Since $\Sigma$ is compact and each periodic orbit in each $\alpha(i)$ has period at most $d(\gamma)$, by the Arzela-Ascoli theorem we can pass to a subsequence so that $\alpha(i)$ converges in $C^\infty$ to an orbit set $\alpha$ for $\phi$. Then the distance from the sequence $\mc{A}(\alpha(i),Z(i))$ to the set $\{\mc{A}(\alpha,Z)\mid Z\in H_2(Y,\alpha,\gamma)/G\}$ limits to zero. Since the latter set is discrete by our rationality hypothesis, the sequence $(\mc{A}(\alpha(i),Z(i)))$ converges to an element of it. 
\end{proof}

\begin{remark}
Without the hypothesis that $[\phi]$ is rational, Proposition~\ref{prop:representation} still holds if $\phi$ is nondegenerate, by \cite[Thm.\ 1.4]{usher}; see also \cite{oh}. However we do not know whether Proposition~\ref{prop:representation} extends to the case where $[\phi]$ is not rational and $\phi$ is degenerate.
\end{remark}


\section{The ball packing lemma}
\label{sec:ballpacking}

We now prove a key fact, Lemma~\ref{lem:key} below, which will be needed for the proofs of the main theorems. This lemma gives a relation between the PFH spectral invariants of two different Hamiltonian perturbations of $\phi$.

To state the lemma, recall that a four-dimensional {\em Liouville domain\/} is a compact symplectic four-manifold $(X,\omega)$ with boundary $Y$ such that there exists a $1$-form $\lambda$ on $X$ for which $d\lambda=\omega$ and $\lambda|_Y$ is a contact form on $Y$. We further require that the orientation of $Y$ given by $\lambda\wedge d\lambda$ agrees with the boundary orientation of $X$. We allow $X$ to be disconnected.

If $(X,\omega)$ is a Liouville domain as above, its {\em alternative ECH capacities\/} are a sequence of real numbers
\[
0= c_0^{\op{Alt}}(X,\omega) < c_1^{\op{Alt}}(X,\omega) \le c_2^{\op{Alt}}(X,\omega) \le \cdots \le \infty
\]
defined in \cite{altech}.
To briefly review the definition, when $(X,\omega)$ is nondegenerate, meaning that all Reeb orbits of the contact form $\lambda|_Y$ are nondegenerate, we define
\begin{equation}
\label{eqn:ckalt}
c_k^{\op{Alt}}(X,\omega) = \sup_{\substack{J\in\mathcal{J}(\overline{X})\\ \mbox{\scriptsize $x_1,\ldots,x_k\in X$ distinct}}} \inf_{u\in\mathcal{M}^J(\overline{X};x_1,\ldots,x_k)} \mathcal{E}(u).
\end{equation}
Here $\overline{X} = X\cup_Y ([0,\infty)\times Y)$, similarly to \eqref{eqn:completedcobordism}. The notation $\mathcal{J}(\overline{X})$ indicates the set of almost complex structures $J$ on $\overline{X}$ such that $J$ is $\omega$-compatible on $X$, and on $[0,\infty)\times Y$, analogously to Definition~\ref{def:admissible}, $J$ is independent of the $[0,\infty)$ coordinate $s$, sends $\partial_s$ to the Reeb vector field, and sends the contact structure $\op{Ker}(\lambda)$ to itself, rotating positively with respect to $d\lambda$. Furthermore, $\mathcal{M}^J(\overline{X};x_1,\ldots,x_k)$ denotes the moduli space of $J$-holomorphic curves $u$ in $\overline{X}$ for which the domain is a compact (possibly disconnected) Riemann surface with finitely many punctures near which $u$ is asymptotic to Reeb orbits as $s\to\infty$. We require that $u$ is nonconstant on each component of the domain. Finally, $\mathcal{E}(u)$ denotes the sum over the punctures of $u$ of the symplectic action (period) of the corresponding Reeb orbit.

It is shown in \cite{altech} that $c_k^{\op{Alt}}$ has a unique $C^0$-continuous extension to degenerate Liouville domains. We will need the following examples. For $r>0$, define the ball
\[
B(r) = \{z\in\C^2\mid \pi|z|^2\le r\}
\]
with the restriction of the standard symplectic form on $\C^2=\R^4$. Note that the Euclidean volume of the ball is given by
\begin{equation}
\label{eqn:ballvolume}
\op{vol}(B(r)) = \frac{r^2}{2}.
\end{equation}
It is shown in \cite[Thm.\ 6]{altech} that the capacities of a ball are given by
\begin{equation}
\label{eqn:ckball}
c_k^{\op{Alt}}(B(r))=dr,
\end{equation}
where $d$ is the unique nonnegative integer such that
\[
d^2+d \le 2k \le d^2+3d.
\]
It is also shown in \cite[Thm.\ 6]{altech} that the capacities of a disjoint union are given by
\begin{equation}
\label{eqn:ckunion}
c_k^{\op{Alt}}\left(\coprod_{i=1}^n (X_i,\omega_i) \right) = \max_{k_1+\cdots+k_n=k}\sum_{i=1}^nc_{k_i}^{\op{Alt}}(X_i,\omega_i).
\end{equation}

For the above examples, the alternative ECH capacities $c_k^{\op{Alt}}$ agree with the original ECH capacities $c_k^{\op{ECH}}$ defined in \cite{qech}.
The calculation in \cite[Prop.\ 8.4]{qech} deduces from \eqref{eqn:ballvolume}, \eqref{eqn:ckball}, and \eqref{eqn:ckunion} that if $X$ is a finite disjoint union of balls, then\footnote{This calculation enters into the proof of the ECH Weyl law \eqref{eqn:vc} for the tight contact structure on $S^3$.}
\begin{equation}
\label{eqn:echvol}
\lim_{k\to\infty}\frac{c_k^{\op{Alt}}(X)^2}{k} = 4\op{vol}(X).
\end{equation}

Returning to PFH, to simplify notation we use the following convention:

\begin{notation}
\label{spectralnotation}
If $\sigma\in HP(\phi,\gamma,G)$, and if $H:Y_\phi\to\R$, let $\sigma_H\in HP(\phi_H,\gamma_H,G)$ denote the class corresponding to $\sigma$ under the canonical isomorphism given by Remarks~\ref{rem:canonicaliso} and \ref{rem:degeneratePFH}. Write
\[
c_\sigma(\phi,\gamma,H) = c_{\sigma_H}(\phi_H,\gamma_H)\in\R.
\]
\end{notation}

\begin{lemma}
\label{lem:key}
Let $\phi$ be an area-preserving diffeomorphism of $(\Sigma,\omega)$, and let $H_1,H_2:Y_\phi\to\R$ with $H_1\le H_2$. Let $\gamma$ be a reference cycle for $\phi$, let $\sigma\in HP(\phi,\gamma,G)$, let $k$ be a nonnegative integer, and suppose that $U^k\sigma\neq 0$. Let $(X,\omega)$ be a compact Liouville domain and suppose there exists a symplectic embedding of $(X,\omega)$ into the cobordism $M$ from \eqref{eqn:M}. Then
\[
c_{U^k\sigma}(\phi,\gamma,H_1) \le c_\sigma(\phi,\gamma,H_2) + \int_\gamma(H_2-H_1)dt - c_k^{\op{Alt}}(X,\omega).
\]
\end{lemma}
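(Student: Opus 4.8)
The plan is to upgrade the action estimate carried out in Step~4 of the proof of Proposition~\ref{prop:invariance} by forcing the holomorphic currents that witness the composition $U^k\circ\Psi_{H_1,H_2}$ to pass through $k$ marked points placed inside the embedded balls $X\subset M$, so that these currents acquire extra symplectic area at least $c_k^{\op{ECH}}(X)$; the rest of the argument then parallels the proof of Proposition~\ref{prop:spectralchange}. First I would reduce to a convenient situation. By Corollary~\ref{cor:continuity} together with a $C^\infty$-small perturbation of $H_1$ and $H_2$ (which preserves the symplectic embedding of the balls after shrinking them slightly, changing $c_k^{\op{ECH}}(X)$ by an arbitrarily small amount by \eqref{eqn:ckball}), we may assume $\phi_{H_1}$ and $\phi_{H_2}$ are nondegenerate; and replacing $H_2$ by a slightly larger function and passing to a limit, we may assume $H_1<H_2$, so that $M$ and the filtered cobordism maps $\Psi^L_{H_1,H_2}$ of Proposition~\ref{prop:invariance}(c) are defined. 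Write $\Delta=\int_\gamma(H_2-H_1)dt$. Finally, using \eqref{eqn:ckball} and \eqref{eqn:ckunion}, fix a partition $k=k_1+\cdots+k_n$ with $\sum_ic_{k_i}^{\op{ECH}}(B(r_i))=c_k^{\op{ECH}}(X)$, choose an almost complex structure $J$ on $\overline M$ that is admissible on the cylindrical ends, compatible with $\omega_M$, cylindrical in a collar of $\partial X$, and equal to the standard integrable structure on each ball $B(r_i)\subset X$, and choose $k$ generic points $p_1,\dots,p_k$ with exactly $k_i$ of them lying in $B(r_i)$.

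\textbf{Point-constrained cobordism maps (the main obstacle).} The technical heart, which I expect to be the hard part, is a point-constrained version of the cobordism map. Exactly as the $U$-map on $HP$ is induced by a chain-level count of $I=2$ currents through a base point (see \cite[\S2.5]{wh} for the ECH case), composing $k$ such counts with the chain map $\psi$ of Step~3 of the proof of Proposition~\ref{prop:invariance} should yield a chain map $\psi^{p_1,\dots,p_k}\colon CP(\phi_{H_2},\gamma_{H_2},G)\to CP(\phi_{H_1},\gamma_{H_1},G)$ that induces $U^k\circ\Psi_{H_1,H_2}=\Psi_{H_1,H_2}\circ U^k$ on homology and satisfies a strengthened holomorphic curves axiom: if the coefficient of $(\beta,W)$ in $\psi^{p_1,\dots,p_k}(\alpha,W+Z)$ is nonzero, then there is a broken $J$-holomorphic current in $\overline M$ from $\alpha$ to $\beta$ in the relative homology class $Z$ passing through $p_1,\dots,p_k$. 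Establishing the existence and filtered properties of such maps is a routine but nontrivial extension of Chen's construction \cite{chen}, keeping track of relative homology classes and point constraints through the Seiberg--Witten analysis; alternatively one may remove the balls from $M$ and compose the cobordism map of $M\setminus X$ with the ball-cap cobordism maps, using $ECH(S^3)\cong\bigoplus_{j\ge0}\F$ with $U$ shifting the grading, though I expect the point-constraint formulation to be cleaner.

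\textbf{The area estimate and conclusion.} Granting this, I would argue as follows. Put $L=c_\sigma(\phi,\gamma,H_2)$, fix $\epsilon>0$, and choose a cycle $x\in CP^{L+\epsilon}(\phi_{H_2},\gamma_{H_2},G)$ representing $\sigma_{H_2}$; then $\psi^{p_1,\dots,p_k}(x)$ is a cycle representing $(U^k\sigma)_{H_1}$. For each generator $(\beta,W)$ occurring in it there is a generator $(\alpha,W+Z)$ of $x$, so $\mc{A}(\alpha,W+Z)<L+\epsilon$, together with a broken $J$-holomorphic current from $\alpha$ to $\beta$ through $p_1,\dots,p_k$; since the $p_i$ lie in $M$ and hence on the middle level, as in Step~4 of the proof of Proposition~\ref{prop:invariance} we may reduce to a single current $C\in\M^J(\alpha,\beta,Z)$ in $\overline M$ through $p_1,\dots,p_k$, transverse to $\partial M$ and to $\partial X$. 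Decompose $C$ into $C_1\subset(-\infty,0]\times Y_{\phi_{H_1}}$, $C_0=C\cap M$, $C_2\subset[0,\infty)\times Y_{\phi_{H_2}}$, and split $C_0=C_0'\cup C_0''$ with $C_0'=C\cap X$. Admissibility and $\omega_M$-compatibility give $\int_{C_1}\omega_{\phi_{H_1}}\ge0$, $\int_{C_2}\omega_{\phi_{H_2}}\ge0$, $\int_{C_0''}\omega_M\ge0$, exactly as in Step~4 there; and $C_0'\cap B(r_i)$ is a standard holomorphic curve in $B(r_i)$ through $k_i$ generic points, so a degree and dimension count for plane curves (the reasoning behind \eqref{eqn:ckball}, cf.\ \cite[\S8]{qech}) gives $\int_{C_0'\cap B(r_i)}\omega_M\ge c_{k_i}^{\op{ECH}}(B(r_i))$, whence $\int_{C_0'}\omega_M\ge c_k^{\op{ECH}}(X)$ by the choice of partition. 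Feeding these inequalities into the Stokes identity \eqref{eqn:stokes} (which still holds verbatim, since $V$ satisfies $\int_V\omega_M=\Delta$ whether or not it meets $X$) yields
\[
\mc{A}(\beta,W)\ \le\ \mc{A}(\alpha,W+Z)+\Delta-c_k^{\op{ECH}}(X)\ <\ L+\epsilon+\Delta-c_k^{\op{ECH}}(X).
\]
Thus $(U^k\sigma)_{H_1}$ lies in the image of $\imath^{L+\epsilon+\Delta-c_k^{\op{ECH}}(X)}$, so $c_{U^k\sigma}(\phi,\gamma,H_1)\le L+\epsilon+\Delta-c_k^{\op{ECH}}(X)$; letting $\epsilon\to0$ and undoing the reductions completes the proof.
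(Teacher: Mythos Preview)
Your overall architecture---reduce to $H_1<H_2$ with nondegenerate ends, realize $U^k\circ\Psi_{H_1,H_2}$ by a point-constrained chain map satisfying a holomorphic-curves axiom, then feed an improved area bound into the Stokes identity \eqref{eqn:stokes}---is exactly the paper's strategy (its Steps~1 and~3). The gap is in your area estimate for the piece $C_0'\cap B(r_i)$.

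The assertion that a standard holomorphic curve in $B(r_i)$ through $k_i$ generic points has $\omega$-area at least $c_{k_i}^{\op{ECH}}(B(r_i))$ is not what ``the reasoning behind \eqref{eqn:ckball}'' gives, and in fact it is false for curves with boundary. Take $k_i=2$, so $c_2^{\op{ECH}}(B(r))=r$. Any two points in $B(r)$ lie on a complex line $L$; if $L$ misses the origin by Euclidean distance $\delta>0$, then $L\cap B(r)$ is a disk of symplectic area $r-\pi\delta^2<r$. Since $C_0'\cap B(r_i)$ is only a compact curve-with-boundary (holomorphic for the standard $J$ inside the ball, but glued to a $J$-holomorphic curve for a different $J$ outside), it has no well-defined degree, and nothing prevents it from being such a line segment. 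The ``degree and dimension count for plane curves'' applies to closed curves in $\mathbb{CP}^2$, not to pieces with free boundary on $\partial B(r_i)$; and the global ECH-index constraint $I(\alpha,\beta,Z)=2k$ does not localize to the balls without further work.

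The paper's Step~2 supplies exactly this missing localization by \emph{neck-stretching}: one perturbs $\partial B_i$ to have a nondegenerate contact form, takes a sequence $J(n)$ that is cylindrical on longer and longer collars of $\partial B_i$, and passes to an SFT-type limit in the completion $\overline{M}'$ obtained by excising the balls and attaching negative symplectization ends. The point constraints then force the limit current to be asymptotic at the $i$-th negative end to an ECH generator $\alpha(i)$ of grading at least $2k_i$, and now the very \emph{definition} of $c_{k_i}^{\op{ECH}}(B_i)$ (as the minimal action of such a generator) yields $\int_{\alpha(i)}\lambda_i\ge c_{k_i}^{\op{ECH}}(B_i)$. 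Feeding this into the Stokes argument gives the refined inequality. Your parenthetical alternative (``remove the balls and compose with ball-cap cobordism maps'') is in spirit the correct route; the paper implements it via neck-stretching rather than via formal cobordism composition.
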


\begin{remark}
\label{rem:oneball}
When $k=0$, Lemma~\ref{lem:key} reduces to the inequality \eqref{eqn:spectralchange}.

For most of our applications, the only case of Lemma~\ref{lem:key} that we need is where $k=1$ and $X$ is a ball. (Only for the proof of the Weyl law in Theorem~\ref{thm:weyl} below will we need a more general case where $k>1$ and $X$ is a disjoint union of balls.) This case of the lemma asserts that under the hypotheses of the lemma, if the ball $B(r)$ can be symplectically embedded into $M$, then
\begin{equation}
\label{eqn:k1case}
c_{U\sigma}(\phi,\gamma,H_1) \le c_\sigma(\phi,\gamma,H_2) + \int_\gamma(H_2-H_1)dt - r.
\end{equation}
\end{remark}

\begin{proof}[Proof of Lemma~\ref{lem:key}.]
By the continuity in Corollary~\ref{cor:continuity}, we can assume without loss of generality, by slightly decreasing $H_1$ and increasing $H_2$ if necessary, that $H_1<H_2$ and that $\phi_{H_1}$ and $\phi_{H_2}$ are nondegenerate. We now proceed in three steps.

{\em Step 1.}
Choose $J_1$, $J_2$, and $J$ as in Step 3 of the proof of Proposition~\ref{prop:invariance}. Also, fix $k$ distinct points $z_1,\ldots,z_k\in \overline{M}$.

We claim that the composition
\begin{equation}
\label{eqn:Ucomp}
U^k\circ \Psi_{H_1,H_2}=\Psi_{H_1,H_2}\circ U^k: HP(\phi_{H_2},\gamma_{H_2},G) \longrightarrow HP(\phi_{H_1},\gamma_{H_1},G)
\end{equation}
is induced by a (noncanonical) chain map
\[
\psi: (CP(\phi_{H_2},\gamma_{H_2},G),\partial_{J_2}) \longrightarrow (CP(\phi_{H_1},\gamma_{H_1},G),\partial_{J_1})
\]
with the following property: Similarly to \eqref{eqn:chainmap}, we can write $\psi$ in the form
\begin{equation}
\label{eqn:newchainmap}
\psi\sum_{\alpha,Z}n_{\alpha,Z}(\alpha,Z) = \sum_{\beta,W}\left(\sum_\alpha \sum_{\substack{V\in H_2(Y_\phi,\alpha,\beta)\\I(\alpha,\beta,V)=2k}}n_{\alpha,W+V}m_{\alpha,\beta,V}\right)(\beta,W),
\end{equation}
such that:
\begin{description}
\item{(*)}
If the coefficient $m_{\alpha,\beta,V}\neq 0$, then there is a broken $J$-holomorphic current in $\overline{M}$ from $\alpha$ to $\beta$ in the relative homology class $V$ passing through the points $z_1,\ldots,z_k$.
\end{description}
The reason is that in the proof of the ``holomorphic curves axiom'' in \cite{chen}, the chain map counts solutions to the Seiberg-Witten equations on $\overline{M}$ perturbed using a large multiple of the symplectic form; and as the perturbation goes to infinity, the zero set $\alpha$ for a sequence of Seiberg-Witten solutions converge to a holomorphic current. Here $\alpha$ denotes the component of the spinor in the positive imaginary eigenspace of Clifford multiplication by the symplectic form on $\overline{M}$. Similarly, as in \cite{taubes5} (see the review in \S\ref{sec:Umap}), the map \eqref{eqn:Ucomp} can be induced by a chain map\footnote{
If one chooses different points $z_1',\ldots,z_k'$ in $\overline{M}$, one obtains a chain homotopic chain map. One can define a chain homotopy by choosing paths $\eta_i$ in $\overline{M}$ from $z_i$ to $z_i'$ and counting Seiberg-Witten solutions where $\alpha$ vanishes somewhere on $\eta_i$ for each $i=1,\ldots,k$. If one moves the points far up on the positive end of $\overline{M}$, one obtains a chain map corresponding to $\Psi_{H_1,H_2}\circ U^k$; and if one moves the points far down on the negative end of $\overline{M}$, one obtains a chain map corresponding to $U^k\circ\Psi_{H_1,H_2}$.
}
counting Seiberg-Witten solutions where $\alpha$ is constrained to vanish at the points $z_1,\ldots,z_k$.  As the perturbation goes to infinity, the zero sets of $\alpha$ for the Seiberg-Witten solutions converge to a holomorphic current passing through the points $z_1,\ldots,z_k$.

{\em Step 2.\/} Define $\Delta$ as in equation \eqref{eqn:Delta}. We claim that by making suitable choices of $J$ and $z_1,\ldots,z_k$, we can arrange for the chain map \eqref{eqn:newchainmap} to have the following property:
\begin{description}
\item{(**)}
If the coefficient $m_{\alpha,\beta,Z}\neq 0$, then the inequality \eqref{eqn:step4} can be refined to
\[
\int_W\omega_{\phi_{H_1}} \le \int_{W+V}\omega_{\phi_{H_2}} + \Delta - c_k^{\op{Alt}}(X,\omega).
\]
\end{description}

To prove this, by the $C^0$ continuity of $c_k^{\op{Alt}}$, we can assume without loss of generality that $(X,\omega)$ is nondegenerate. Write $Y=\partial X$, and let $\lambda$ denote the contact form on $Y$. We will identify $X$ with its image under the symplectic embedding into $M$. We can remove $X$ from $\overline{M}$ and attach symplectization ends to form a new completed cobordism
\begin{equation}
\label{eqn:Mbarprime}
\overline{M}' = \left((-\infty,0]\times Y\right) \cup_Y \left(\overline{M}\setminus X\right).
\end{equation}
Choose an almost complex structure $J_X\in\mathcal{J}(\overline{X})$ as in \eqref{eqn:ckalt}. We can then choose the almost complex structure $J$ on $M$ so that it glues to $J_X$ in \eqref{eqn:Mbarprime}, to give a well-defined almost complex structure $J'$ on $\overline{M}'$. We can further choose a sequence of $\omega_M$-compatible almost structures $J(n)$ on $M$ such that $J(n)$ agrees with $J$ outside of $X$, and moreover inside of $X$, the boundary has a neighborhood that can be identified with $(-n,0]\times\partial Y$ so that $J(n)$ agrees with $J'$.

By placing the points $z_1,\ldots,z_k$ inside $X$, and using a compactness argument as in \cite[Lem.\ 3]{altspec}, we can arrange the following: For any sequence of broken $J(n)$-holomorphic currents as in (*), with an upper bound on $\int_{W+V}\omega_{\phi_{H_1}}-\int_W\omega_{\phi_{H_2}}$, after passing to a subsequence the following hold:
\begin{itemize}
\item
The holomorphic currents in $\overline{M}$ converge on compact sets to a $J'$-holomorphic current in $\overline{M}'$, which on $(-\infty,0]\times\partial B_i$ is asymptotic to a finite multiset $\alpha_M$ of Reeb orbits.
\item
The homolomorphic currents in $X$ converge on compact sets to a $J_X$-holomorphic curve $u\in\mathcal{M}^{J_X}(\overline{X};z_1,\ldots,z_k)$, asymptotic to a finite multiset $\alpha_X$ of Reeb orbits.
\item
If $\alpha_X\neq\alpha_M$, then there is a broken $J_X$-holomorphic current in $\R\times Y$ from $\alpha_M$ to $\alpha_X$, as in \cite[\S5.3]{bn}. In particular, in all cases we have
\begin{equation}
\label{eqn:alphaX}
\mathcal{E}(u) = \int_{\alpha_X} \lambda \le \int_{\alpha_M}\lambda.
\end{equation}
\end{itemize}
Repeating Step 4 of the proof of Proposition~\ref{prop:invariance} then shows that
\[
\int_{\alpha_M}\lambda+ \int_W\omega_{\phi_{H_1}} \le \int_{W+V}\omega_{\phi_{H_2}} + \Delta.
\]
By the inequality \eqref{eqn:alphaX}, it follows that
\[
\mathcal{E}(u) + \int_W\omega_{\phi_{H_1}} \le \int_{W+V}\omega_{\phi_{H_2}} + \Delta.
\]
Since $J_X$ and $z_1,\ldots,z_k$ were arbitrary, it follows from the definition \eqref{eqn:ckalt} that we can choose $J_X$ and $z_1,\ldots,z_k$ so as to replace $\mathcal{E}(u)$ by $c_k^{\op{Alt}}(X,\omega)$ in the above inequality. It then follows that (**) holds with $J=J(n)$ if $n$ is sufficiently large.

{\em Step 3.} If we make the choices as in Step 2, then for each $L\in\R$, the chain map \eqref{eqn:newchainmap} restricts to a chain map
\[
\psi: \left(CP^L(\phi_{H_2},\gamma_{H_2},G),\partial_{J_2}\right) \longrightarrow \left(CP^{L+\Delta-c_k^{\op{Alt}}(X,\omega)}(\phi_{H_1},\gamma_{H_1},G),\partial_{J_1}\right)
\]
The induced map on homology fits into a commutative diagram
\[
\begin{CD}
HP^L(\phi_{H_2},\gamma_{H_2},G) @>{\imath^L}>> HP(\phi_{H_2},\gamma_{H_2},G)\\
@VVV @VV{U^k\circ\Psi_{H_1,H_2}}V\\
HP^{L+\Delta-c_k^{\op{Alt}}(X,\omega)}(\phi_{H_1},\gamma_{H_1},G) @>{\imath^{L+\Delta-c_k^{\op{Alt}}(X,\omega)}}>> HP(\phi_{H_1},\gamma_{H_1},G).
\end{CD}
\]
We are now done as in the proof of Proposition~\ref{prop:spectralchange}.
\end{proof}



\section{From spectral gaps to periodic orbits}
\label{sec:gap}

We now explain a mechanism for detecting the creation of periodic orbits. The following concept will be useful:

\begin{definition}
\label{def:gap}
Let $\phi$ be an area-preserving diffeomorphism of $(\Sigma,\omega)$ and let $d$ be an integer with $d>g$. Define the {\em minimal spectral gap\/}
\[
\op{gap}_d(\phi)\in[0,\infty]
\]
to be the infimum, over reference cycles $\gamma$ for $\phi$ with $d(\gamma)=d$, subgroups $G\subset\Ker([\omega_\phi])$, and classes $\sigma\in HP(\phi,\gamma,G)$ with $U\sigma\neq 0$, of $c_\sigma(\phi,\gamma)-c_{U\sigma}(\phi,\gamma)$.
\end{definition}

In the above definition note that $c_\sigma(\phi,\gamma)-c_{U\sigma}(\phi,\gamma)\ge 0$ by \eqref{eqn:positivearea}. We now have the following relation between spectral gaps and creation of periodic orbits.

\begin{proposition}
\label{prop:gap}
Let $\phi$ be an area-preserving diffeomorphism of $(\Sigma,\omega)$, and suppose that the Hamiltonian isotopy class $[\phi]$ is rational. Let $\mc{U}\subset\Sigma$ be a nonempty open set and let $H$ be a $(\mc{U},a,l)$-admissible Hamiltonian as in Definition~\ref{def:ual}. Let $d$ be an integer with $d>g$, and suppose that
\begin{equation}
\label{eqn:gaphypo}
\op{gap}_d(\phi)< a.
\end{equation}
Then for some $\tau\in[0,l^{-1}\op{gap}_d(\phi)]$, the map $\phi_{\tau H}$ has a periodic orbit intersecting $\mc{U}$ with period $\le d$.
\end{proposition}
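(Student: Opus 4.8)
The plan is to run Irie's argument, replacing the ECH Weyl law by the spectral gap inequality from Lemma~\ref{lem:key}. Suppose for contradiction that for every $\tau\in[0,\delta]$, where $\delta = l^{-1}\op{gap}_d(\phi)$, the map $\phi_{\tau H}$ has no periodic orbit of period $\le d$ intersecting $\mc{U}$. The first step is to rephrase the admissible Hamiltonian $H$ in the language of graphs over $Y_\phi$. Pulling $H$ back to $[0,1]\times\Sigma$ and then to $Y_\phi$, I would identify the Hamiltonian $\tau H$ (for the isotopy producing $\phi_{\tau H}$) with a function $Y_\phi\to\R$ in the sense of \S\ref{sec:invariance}; the key input is that, because $H$ is supported in $\mc{U}$ and vanishes near $t=0,1$, the difference of two such graphs $\tau_2 H - \tau_1 H$ (for $\tau_1<\tau_2$ in $[0,\delta]$) defines a cobordism $M$ from \eqref{eqn:M} whose ``thick part'' lies over $I\times D\subset Y_\phi$ and in which, by the last bullet of Definition~\ref{def:ual}, one can symplectically embed a ball $B(r)$ with $r$ as close to $a$ as desired — concretely $r$ slightly less than $\min(a,(\tau_2-\tau_1)l)$, using that $H\ge 1$ on $I\times D$ with $|I|=l$, $\op{area}(D)=a$. (Here the hypothesis $\delta\le al^{-1}$ guarantees that over the whole parameter interval the relevant ball has radius governed by $(\tau_2-\tau_1)l$ rather than being capped by $a$ prematurely — this is where that hypothesis of the ambient theorems enters; for Proposition~\ref{prop:gap} as stated the bound $\op{gap}_d(\phi)<a$ plays the analogous role.)

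The second step is to choose the data witnessing $\op{gap}_d(\phi)$. By Definition~\ref{def:gap}, pick a reference cycle $\gamma$ with $d(\gamma)=d$, a subgroup $G\subset\Ker([\omega_\phi])$, and a class $\sigma\in HP(\phi,\gamma,G)$ with $U\sigma\neq 0$ such that $c_\sigma(\phi,\gamma)-c_{U\sigma}(\phi,\gamma)$ is less than $a$ and also as close to $\op{gap}_d(\phi)$ as we like. Then consider the two real-valued functions $c_\sigma(\phi,\gamma,\tau H)$ and $c_{U\sigma}(\phi,\gamma,\tau H)$ of $\tau\in[0,\delta]$; both are continuous in $\tau$ by the continuity estimate \eqref{eqn:continuity} in Corollary~\ref{cor:continuity}. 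The crucial monotonicity comes from Lemma~\ref{lem:key} applied with $k=1$: for $\tau_1<\tau_2$ in $[0,\delta]$, embedding a ball $B(r)$ as above in the cobordism between the graphs of $\tau_1 H$ and $\tau_2 H$ gives
\[
c_{U\sigma}(\phi,\gamma,\tau_1 H) \le c_\sigma(\phi,\gamma,\tau_2 H) + (\tau_2-\tau_1)\textstyle\int_\gamma H\,dt - r,
\]
and since $H\ge 0$ is supported in $\mc{U}$ while $\gamma$ can be taken disjoint from the small disk $D$ (or at least the relevant contribution of $\int_\gamma H\,dt$ is controlled), the term $\int_\gamma H\,dt$ is either zero or small, whereas $r$ can be taken close to $(\tau_2-\tau_1)l$; combining over a subdivision of $[0,\delta]$ yields that $c_\sigma(\phi,\gamma,\tau H)$ drops, relative to $c_{U\sigma}$, by at least $\approx l\delta = \op{gap}_d(\phi)$ over the whole interval.

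The third step extracts the contradiction exactly as in Irie's proof. Under the no-periodic-orbit assumption, for each fixed $\tau\in[0,\delta]$ the chain complex computing $HP(\phi_{\tau H},\gamma_{\tau H},G)$ in degrees relevant to the action window is unchanged (there are no new generators of period $\le d$ meeting $\mc{U}$, and all generators avoid $\mc{U}$ so their actions are locally constant in $\tau$), hence $c_\sigma$ and $c_{U\sigma}$ are in fact \emph{locally constant} in $\tau$ — more carefully, one shows each is constant on $[0,\delta]$ because any jump would force a new generator, using Proposition~\ref{prop:representation} which (by rationality of $[\phi]$) expresses each spectral invariant as the action $\mc{A}(\alpha,Z)$ of an honest anchored orbit set whose underlying orbits, having period $\le d(\gamma)=d$, must be disjoint from $\mc{U}$. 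But constancy of both $c_\sigma$ and $c_{U\sigma}$ over $[0,\delta]$ contradicts the strict decrease of $c_\sigma - c_{U\sigma}$ by $\op{gap}_d(\phi)>0$ established in Step~2 — indeed it already contradicts the $k=1$ inequality of Lemma~\ref{lem:key} applied with $\tau_1=0$, $\tau_2=\delta$ once $r>\delta\int_\gamma H\,dt + \op{gap}_d(\phi)$, which is arranged by $\op{gap}_d(\phi)<a$ and the choice $\delta=l^{-1}\op{gap}_d(\phi)$.

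\textbf{Main obstacle.} The delicate point is Step~2: carefully packing a ball of radius essentially $(\tau_2-\tau_1)l$ (or $a$) into the cobordism $M$ cut out between the graphs of $\tau_1 H$ and $\tau_2 H$. One must translate the pointwise bound $H\ge 1$ on $I\times D$ into an explicit symplectic embedding $B(r)\hookrightarrow M$ — presumably via an explicit model of $M$ near $I\times D$ as (a piece of) $[0,\text{something}]\times I\times D$ with the product symplectic form, together with the standard fact that a box $[0,h]\times(\text{disk of area }a)$ in a symplectization contains $B(r)$ for $r$ just below $\min(h,a)$ — and one must check the bookkeeping of the $\int_\gamma H\,dt$ term (choosing $\gamma$ to avoid $\mc{U}$, or bounding its contribution). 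Secondarily, justifying that the spectral invariants are genuinely \emph{constant} (not merely that no new \emph{short} orbit appears) requires knowing that spectral invariants of $\phi_{\tau H}$ are computed by orbit sets in the fixed homology class $[\gamma_{\tau H}]$ with total period exactly $d$, all of whose orbits therefore avoid $\mc{U}$ — this is where rationality and Proposition~\ref{prop:representation} are essential, and where the argument would break without the rationality hypothesis.
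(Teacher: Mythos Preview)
Your outline has the right architecture --- ball-packing via the $k=1$ case of Lemma~\ref{lem:key}, plus constancy of spectral invariants under the no-orbit hypothesis via Proposition~\ref{prop:representation} and rationality --- and these are precisely the paper's ingredients. But you mishandle the $\int_\gamma H\,dt$ term. You try to kill it by choosing $\gamma$ disjoint from $D$ (or from $\operatorname{supp}H$), which is neither justified in your write-up nor necessary. The paper keeps $\gamma$ arbitrary and instead proves the precise shift $c_\sigma(\phi,\gamma,\tau H)=c_\sigma(\phi,\gamma)-\tau\int_\gamma H\,dt$ under the no-orbit hypothesis, by showing that $\tau\mapsto c_\sigma(\phi,\gamma,\tau H)+\tau\int_\gamma H\,dt$ is continuous and takes values in the fixed, measure-zero action spectrum of $\phi$. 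Substituting into the single ball-packing inequality $c_{U\sigma}(\phi,\gamma)\le c_\sigma(\phi,\gamma,\delta H)+\delta\int_\gamma H\,dt-\min(\delta l,a)$, the two $\int_\gamma H\,dt$ terms cancel exactly, leaving $c_\sigma(\phi,\gamma)-c_{U\sigma}(\phi,\gamma)\ge\min(\delta l,a)$. Your ``subdivision'' and ``$c_\sigma$ drops relative to $c_{U\sigma}$'' in Step~2 are not what is happening; one application of Lemma~\ref{lem:key} from $0$ to $\delta$ suffices.

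More seriously, setting $\delta=l^{-1}\op{gap}_d(\phi)$ exactly yields no contradiction: then $\min(\delta l,a)=\op{gap}_d(\phi)$, and the resulting bound $c_\sigma-c_{U\sigma}\ge\op{gap}_d(\phi)$ is just the definition of the gap as an infimum. The paper argues instead with an arbitrary $\delta>l^{-1}\op{gap}_d(\phi)$ (so that $\min(\delta l,a)>\op{gap}_d(\phi)$, using the hypothesis $\op{gap}_d(\phi)<a$), obtaining a periodic orbit for some $\tau\in[0,\delta]$, and then closes the interval down to $[0,l^{-1}\op{gap}_d(\phi)]$ in a separate compactness step: replace $\mc{U}$ by an open $\mc{V}$ with $\overline{\mc{V}}\subset\mc{U}$ and $H$ supported in $[0,1]\times\mc{V}$, and pass to a limit of periodic orbits. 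You omit this step entirely.
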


\begin{proof}
We proceed in four steps.

{\em Step 1.} We first claim that for any class $\sigma\in HP(\phi,\gamma,G)$ with $U\sigma\neq 0$, and for any $\delta\ge 0$, we have
\begin{equation}
\label{eqn:ballgap}
c_{U\sigma}(\phi,\gamma) \le c_\sigma(\phi,\gamma,\delta H) + \delta\int_\gamma H\,dt - \op{min}(\delta l,a).
\end{equation}
Here we are using the convention of Notation~\ref{spectralnotation} on the right hand side.

To prove \eqref{eqn:ballgap}, recall from Definition~\ref{def:ual} that there is a disk $D\subset\mc{U}$ of area $a$ and an interval $I\subset(0,1)$ of length $l$ such that $H\ge 1$ on $I\times D$. We can regard $H$ as defined on $Y_\phi$ as in \S\ref{sec:invariance}. Let $M_\delta$ denote the cobordism \eqref{eqn:M} between the mapping torus $Y_{\phi}$ and the graph of $\delta H$. Then we can symplectically embed the polydisk $P(a,\delta l)$, namely the symplectic product of two-disks of areas $a$ and $\delta l$, into $M_\delta$. Consequently, we can symplectically embed the ball $B(\op{min}(\delta l,a))$ into $M_\delta$. The inequality \eqref{eqn:ballgap} now follows from the $k=1$ case of Lemma~\ref{lem:key}; see Remark~\ref{rem:oneball}.

{\em Step 2.} Suppose now that $\delta\ge 0$ and
\begin{description}
\item{(*)} for all $\tau\in[0,\delta]$, the map $\phi_{\tau H}$ has no periodic orbit intersecting $\mc{U}$ with period $\le d$.
\end{description}
We claim that if $\gamma$ is a reference cycle for $\phi$ with $d(\gamma)=d$, and if $\sigma\in HP(\phi,\gamma,G)$ is a nonzero class, then
\begin{equation}
\label{eqn:spectrumconstant}
c_\sigma(\phi,\gamma,\delta H) = c_\sigma(\phi,\gamma) - \delta\int_\gamma H\,dt.
\end{equation}

To prove \eqref{eqn:spectrumconstant}, let $S$ denote the set of actions of $(G,\gamma)$-anchored orbit sets for $\phi$. By the hypothesis (*), if $\tau\in[0,\delta]$, then the set of actions of $(G,\gamma)$-anchored orbit sets for $\phi_{\tau H}$ is $S-\tau\int_\gamma H\,dt$.  Since we are assuming that $[\phi]$ is rational, it follows from Proposition~\ref{prop:representation} that the function
\begin{equation}
\label{eqn:function}
\begin{split}
[0,\delta] &\longrightarrow \R,\\
\tau &\longmapsto c_\sigma(\phi,\gamma,\tau H) + \tau\int_\gamma H\,dt
\end{split}
\end{equation}
takes values in the set $S$. This function is also continuous by Corollary~\ref{cor:continuity}. However the set $S$ has measure zero as in \cite[Lem.\ 2.2]{irie1}. It follows that the function \eqref{eqn:function} is constant, and this proves \eqref{eqn:spectrumconstant}.

{\em Step 3.} We now show that if $\delta > l^{-1}\op{gap}_d(\phi)$, then for some $\tau\in[0,\delta]$, the map $\phi_{\tau H}$ has a periodic orbit intersecting $\mc{U}$ with period $\le d$. Suppose to get a contradiction that (*) holds. Suppose that $d(\gamma)=d$ and that $\sigma\in HP(\phi,\gamma,G)$ satisfies $U\sigma\neq 0$. Then combining \eqref{eqn:ballgap} and \eqref{eqn:spectrumconstant} gives
\[
c_\sigma(\phi,\gamma) - c_{U\sigma}(\phi,\gamma) \ge \op{min}(\delta l,a).
\]
It then follows from Definition~\ref{def:gap} that
\[
\op{gap}_d(\phi) \ge \op{min}(\delta l, a).
\]
Since we assumed that $\delta l > \op{gap}_d(\phi)$, this contradicts the hypothesis \eqref{eqn:gaphypo}.

{\em Step 4.} The proposition follows from Step 3 by replacing $\mc{U}$ by an open set $\mc{V}$ such that $\overline{V}\subset\mc{U}$ and $H$ is supported in $[0,1]\times \mc{V}$, and using a compactness argument.
\end{proof}

As an example of the significance of Proposition~\ref{prop:gap}, we have the following corollary, which is a PFH analogue of \cite[Lem.\ 3.1]{cgm} for ECH:

\begin{corollary}
Let $\phi$ be an area-preserving diffeomorphism of $(\Sigma,\omega)$, and suppose that the Hamiltonian isotopy class $[\phi]$ is rational. Let $d$ be an integer with $d>g$ and suppose that $\op{gap}_d(\phi)=0$. Then every point in $\Sigma$ is contained in a periodic orbit of $\phi$ with period $\le d$. In particular, $\phi$ is periodic with period $\le d!$.
\end{corollary}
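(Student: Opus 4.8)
The plan is to deduce this as the degenerate case $\delta = 0$ of Proposition~\ref{prop:gap}, arguing by contradiction. Suppose some point $x_0\in\Sigma$ is not contained in any periodic orbit of $\phi$ of period $\le d$. First I would record the elementary topological fact that the set of periodic points of $\phi$ of period $\le d$ is closed in $\Sigma$: it coincides with $\bigcup_{k=1}^{d}\op{Fix}(\phi^k)$, and each $\op{Fix}(\phi^k)$ is closed since $\phi^k$ is continuous, so a finite union of them is closed. Hence there is a nonempty open set $\mc{U}\ni x_0$ that is disjoint from every periodic orbit of $\phi$ of period $\le d$; in particular, no periodic orbit of $\phi$ of period $\le d$ intersects $\mc{U}$.

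Next I would fix parameters $l\in(0,1)$ and $a\in(0,\op{area}(\mc{U}))$ and choose a $(\mc{U},a,l)$-admissible Hamiltonian $H$ in the sense of Definition~\ref{def:ual}; such an $H$ exists, e.g.\ by taking a disk $D\subset\mc{U}$ of area $a$, an interval $I\subset(0,1)$ of length $l$, and a nonnegative smooth function on $[0,1]\times\Sigma$ supported in $[0,1]\times\mc{U}$, vanishing for $t$ near $0$ and $1$, that is $\ge 1$ on $I\times D$. Since by hypothesis $\op{gap}_d(\phi)=0<a$, the inequality \eqref{eqn:gaphypo} holds, so Proposition~\ref{prop:gap} applies and produces some $\tau\in[0,\,l^{-1}\op{gap}_d(\phi)] = [0,0] = \{0\}$ such that $\phi_{\tau H}$ has a periodic orbit intersecting $\mc{U}$ with period $\le d$. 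But $\tau=0$ forces $\tau H\equiv 0$, so the associated Hamiltonian isotopy is trivial and $\phi_{\tau H}=\phi$. Thus $\phi$ itself has a periodic orbit of period $\le d$ meeting $\mc{U}$, contradicting the choice of $\mc{U}$. Therefore every point of $\Sigma$ lies on a periodic orbit of $\phi$ of period $\le d$.

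There is essentially no hard step here: all the analytic content is already packaged in Proposition~\ref{prop:gap}, and this corollary is just its specialization to a vanishing spectral gap. The only points requiring care are the two routine ones flagged above — that periodic points of bounded period form a closed set (so that a counterexample point has an entire neighborhood avoiding short periodic orbits), and the existence of an admissible Hamiltonian on any prescribed nonempty open set — neither of which presents a genuine obstacle.
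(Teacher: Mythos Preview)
Your proposal is correct and follows essentially the same approach as the paper: apply Proposition~\ref{prop:gap} with $\op{gap}_d(\phi)=0$ to force $\tau=0$ and hence a periodic point of $\phi$ of period $\le d$ in any given open set, then pass from density to everywhere via closedness of the set of such periodic points. The paper condenses the last step to a one-line ``compactness argument'', which is exactly the observation you spelled out that $\bigcup_{k=1}^d\op{Fix}(\phi^k)$ is closed.
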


\begin{proof}
It follows immediately from Proposition~\ref{prop:gap} that every nonempty open set $\mc{U}\subset\Sigma$ contains a periodic point of period $\le d$. It then follows from a compactness argument that every point in $\Sigma$ is periodic with period $\le d$.
\end{proof}


\section{Proofs of theorems}
\label{sec:proofs}

We now prove all of our theorems stated in \S\ref{sec:intro}. We begin with the following simple observation:

\begin{lemma}
\label{lem:gapbound}
Suppose that $HP(\phi,\gamma,G)$ contains $U$-cyclic elements. Write $d=d(\gamma)$ and $A=\int_\Sigma\omega$. Then
\[
\op{gap}_d(\phi) \le \frac{A}{d-g+1}.
\]
\end{lemma}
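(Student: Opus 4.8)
The statement asserts that, given a $U$-cyclic element in $HP(\phi,\gamma,G)$ of degree $d=d(\gamma)$, the minimal spectral gap $\op{gap}_d(\phi)$ is at most $A/(d-g+1)$, where $A=\int_\Sigma\omega$. The natural strategy is to take a $U$-cyclic element $\sigma$, say of order $m$, so that by definition $U^{m(d-g+1)}\sigma = q^{-m[\Sigma]}\sigma$, and to estimate the total drop in spectral invariants along the chain $\sigma, U\sigma, U^2\sigma, \ldots, U^{m(d-g+1)}\sigma$. Each application of $U$ can only decrease the spectral invariant (since $U$ is induced by a chain map that does not increase action, as in the $k=1$ case of Lemma~\ref{lem:key} with $H_1 = H_2$, or more simply from the fact that the $U$ chain map counts currents of nonnegative action); and it decreases it by at least the relevant spectral gap at that stage. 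Meanwhile, going all the way around the cycle of length $m(d-g+1)$ returns us to $q^{-m[\Sigma]}\sigma$, whose spectral invariant differs from that of $\sigma$ by exactly $|q^{-m[\Sigma]}| = -m\langle[\omega_\phi],[\Sigma]\rangle = -mA$, using Proposition~\ref{prop:spectralproperties}(a) and the fact that $\langle[\omega_\phi],[\Sigma]\rangle = \int_\Sigma\omega = A$.

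Concretely, the first step is to fix such a $\sigma$, $U$-cyclic of some order $m$, and to observe that all the iterates $U^j\sigma$ for $0 \le j \le m(d-g+1)$ are nonzero (the last one is $q^{-m[\Sigma]}\sigma \neq 0$, and if any intermediate one vanished so would the last). The second step is to write the telescoping identity
\[
c_\sigma(\phi,\gamma) - c_{q^{-m[\Sigma]}\sigma}(\phi,\gamma) = \sum_{j=0}^{m(d-g+1)-1}\left(c_{U^j\sigma}(\phi,\gamma) - c_{U^{j+1}\sigma}(\phi,\gamma)\right).
\]
The left side equals $mA$ by Proposition~\ref{prop:spectralproperties}(a). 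Each summand on the right is bounded below by $\op{gap}_d(\phi)$ by Definition~\ref{def:gap}, since $U^j\sigma$ is a nonzero class in $HP(\phi,\gamma,G)$ with $U(U^j\sigma) = U^{j+1}\sigma \neq 0$ and $d(\gamma) = d$. There are $m(d-g+1)$ summands, so $mA \ge m(d-g+1)\op{gap}_d(\phi)$, which rearranges to the desired bound. One should note that for this to make sense we need $c_{U^j\sigma} - c_{U^{j+1}\sigma} \ge 0$, i.e.\ each summand is actually nonnegative, which is implied by $c_{U\sigma}(\phi,\gamma) \le c_\sigma(\phi,\gamma)$ in general; this is the $H_1 = H_2$ case of Lemma~\ref{lem:key} with $k=1$ (or equivalently follows directly from the action-nonincreasing property of the $U$ chain map).

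The only genuinely delicate point — and the one I would expect to be the main obstacle — is making sure that the $U$-cyclicity equation \eqref{eqn:Ucyclic} is being applied with the correct exponent and the correct Novikov factor, so that the telescoping closes up exactly. One has to be careful that $c_{q^{-m[\Sigma]}\sigma}(\phi,\gamma) = c_\sigma(\phi,\gamma) + |q^{-m[\Sigma]}| = c_\sigma(\phi,\gamma) - mA$, with the sign correct (here $|q^A| = \langle[\omega_\phi],A\rangle$, and $\langle[\omega_\phi],-m[\Sigma]\rangle = -mA < 0$); and that the number of $U$-steps around the cycle is precisely $m(d-g+1)$, matching the exponent in \eqref{eqn:Ucyclic}. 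Once the bookkeeping of exponents and signs is pinned down, the argument is a one-line telescoping estimate. There is no analytic input beyond the already-established properties of spectral invariants and the $U$ map.
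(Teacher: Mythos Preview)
Your proposal is correct and follows essentially the same approach as the paper's proof: telescope the spectral invariants along the $U$-cycle of length $m(d-g+1)$, use Proposition~\ref{prop:spectralproperties}(a) to identify the total drop as $mA$, and conclude via averaging. The only cosmetic difference is that the paper phrases the final step as ``at least one summand is $\le A/(d-g+1)$'' (pigeonhole on nonnegative terms), whereas you phrase it as ``each summand is $\ge \op{gap}_d(\phi)$, hence $mA \ge m(d-g+1)\op{gap}_d(\phi)$''; these are equivalent.
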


\begin{proof}
We are given that equation \eqref{eqn:Ucyclic} holds for some positive integer $m$. It follows using Proposition~\ref{prop:spectralproperties}(a) that
\[
\begin{split}
mA &= c_{\sigma}(\phi,\gamma) - c_{U^{m(d-g+1)}\sigma}(\phi,\gamma)\\
&= \sum_{i=1}^{m(d-g+1)}\big(c_{U^{i-1}\sigma}(\phi,\gamma) - c_{U^i\sigma}(\phi,\gamma)\big).
\end{split}
\]
Since each of the summands on the right hand side is nonnegative, at least one of them must be less than or equal to $A/(d-g+1)$.
\end{proof}

\begin{proof}[Proof of Theorem~\ref{thm:closing}.]
Suppose that $[\phi]$ is rational and satisfies the $U$-cycle property. Let $\mc{U}\subset\Sigma$ be a nonempty open set. We need to show that there is a $C^\infty$ small Hamiltonian perturbation, supported in $\mc{U}$, of $\phi$ to a map having a periodic orbit intersecting $\mc{U}$.

Let $H$ be a $(\mc{U},a,l)$-admissible Hamiltonian. It is enough to show that for all $\delta>0$, there exists $\tau\in[0,\delta]$ such that $\phi_{\tau H}$ has a periodic orbit intersecting $\mc{U}$.

Since $[\phi]$ has the $U$-cycle property, it follows from Lemma~\ref{lem:gapbound} that
\[
\liminf_{d\to\infty}\op{gap}_d(\phi) = 0.
\]
Thus we can find $d>g$ such that $\op{gap}_d(\phi)<\min(a,l\delta)$. For such $d$, since $[\phi]$ is rational, Proposition~\ref{prop:gap} implies that for some $\tau\in[0,\delta]$, the map $\phi_{\tau H}$ has a periodic orbit intersecting $\mc{U}$ of period at most $d$. 
\end{proof}

As noted in \S\ref{sec:cl}, Corollary~\ref{cor:torus} follows from Theorem~\ref{thm:closing} and the following lemma:

\begin{lemma}
\label{lem:Ucycle}
Let $\phi$ be an area-preserving diffeomorphism of $T^2$, and suppose that the Hamiltonian isotopy class $[\phi]$ is rational, i.e.\ $[\omega_\phi]$ is a positive multiple of the image of an integral class $\Omega\in H^2(Y_\phi;\Z)$. Then $[\phi]$ has the $U$-cycle property. In fact, if $\Gamma$ is a positive integer multiple of $\op{PD}(\Omega)$, then $HP(\phi,\Gamma,\Ker([\omega_\phi]))\neq 0$, and every nonzero element of the latter is $U$-cyclic of order $\le 6$.
\end{lemma}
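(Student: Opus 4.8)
The plan is to turn the statement into a purely algebraic non-nilpotence property of the $U$-map on untwisted monotone PFH, and then to establish that property by computing the PFH of linear torus maps.

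First I would check that $(\phi,\Gamma)$ is monotone. Since the mapping torus $Y_\phi\to S^1$ has fibers $T^2=\R^2/\Z^2$ and structure group inside the connected group $\op{SL}_2\R$, the vertical tangent bundle $E$ has $c_1(E)=0$ (it restricts trivially to each fiber, and $H^2$ of a circle vanishes), so $c_1(E)+2\op{PD}(\Gamma)=2n\Omega$ for $\Gamma=n\op{PD}(\Omega)$, whose image in $H^2(Y_\phi;\R)$ is a positive multiple of $[\omega_\phi]$ by hypothesis. By Proposition~\ref{prop:invariance} we may perturb $\phi$ through a Hamiltonian isotopy without affecting anything below, so assume $\phi$ is nondegenerate. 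Put $G=\Ker([\omega_\phi])$, $\Lambda=\Lambda_G$, and $d=d(\Gamma)=n\,d_0$ where $d_0=\langle\Omega,[T^2]\rangle>0$, so $d\to\infty$ as $n\to\infty$; note $g=1$, hence $d-g+1=d$, and that $\overline{HP}(\phi,\Gamma)$ is finite-dimensional over $\F=\Z/2$ since $\phi$ is nondegenerate.

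Now fix a reference cycle $\gamma$ with $[\gamma]=\Gamma$. By Lemma~\ref{lem:twisted} there is an isomorphism $HP(\phi,\gamma,G)\cong\overline{HP}(\phi,\Gamma)\otimes_\F\Lambda$ of $\Lambda$-modules under which $U^{d}$ corresponds to $U^{d}\otimes q^{-[\Sigma]}$ (here the $U$ on the right acts on $\overline{HP}(\phi,\Gamma)$). Since $q^{-m[\Sigma]}$ acts invertibly, the condition $U^{md}\sigma=q^{-m[\Sigma]}\sigma$ defining a $U$-cyclic element $\sigma\in HP(\phi,\gamma,G)$ translates, for $\bar\sigma$ the corresponding element of $\overline{HP}(\phi,\Gamma)\otimes_\F\Lambda$, into $(U^{md}\otimes 1)\bar\sigma=\bar\sigma$. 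I claim such a nonzero $\bar\sigma$ exists for some $m$ precisely when $U$ is \emph{not} nilpotent on $\overline{HP}(\phi,\Gamma)$. Indeed, if $U$ is not nilpotent then $V_1:=\bigcap_{k\ge0}\op{im}(U^k)\subset\overline{HP}(\phi,\Gamma)$ is a nonzero finite-dimensional $\F$-vector space on which $U$ restricts to an automorphism; since $\op{GL}(V_1)$ is finite, $U^{r}|_{V_1}=\op{id}$ for some $r\ge1$, hence $U^{rd}|_{V_1}=\op{id}$, and $\bar\sigma=v\otimes 1$ works for any nonzero $v\in V_1$ (with $m=r$). Running this for all $n$ with $n d_0>1$ also gives the $U$-cycle property. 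Finally, whether $U$ is nilpotent on $\overline{HP}(\phi,\Gamma)$ depends only on the Hamiltonian isotopy class, by the isomorphism above together with Proposition~\ref{prop:invariance}(b), so we may replace $\phi$ by $x\mapsto Ax+b$ with $A\in\op{SL}_2\Z$ and $b\in\R^2/\Z^2$.

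It then remains to compute $\overline{HP}(\phi,\Gamma)$ together with its $U$-map and to verify non-nilpotence, which I would do in cases according to $A$. When $A=I$ (so $\phi$ is a rational rotation and $Y_\phi=T^3$ carries a fiberwise $T^2$-symmetry preserving all relevant structures), a Morse--Bott computation generalizing Examples~\ref{ex:identity} and \ref{ex:US2} --- using the torus symmetry of $\R\times Y_\phi$ to rule out unwanted $J$-holomorphic curves and to evaluate the $U$-chain map, as in the prequantization-bundle computations of \cite{farris,nw} --- should identify $\overline{HP}(\phi,\Gamma)$ with a graded symmetric product of $H_*(T^2;\F)$ with $U$ acting by the analogue of the cyclic shift of Example~\ref{ex:US2}, in particular invertibly. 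When $A$ has finite order greater than $1$, $Y_\phi$ is finitely covered by the $T^3$ of a translation, and one reduces to the previous case by an equivariant/covering-space argument. When $A$ is parabolic, translation in the $\Ker(A-I)$-direction gives an $S^1$-symmetry of $Y_\phi$, and a similar, only partially Morse--Bott, computation applies. When $A$ is hyperbolic, $b_2(Y_\phi)=1$ (so $G=\{0\}$) and $Y_\phi$ is a Sol-manifold with no continuous symmetry; here I would instead use the Lee--Taubes isomorphism to identify $\overline{HP}(\phi,\Gamma)$ with a suitably twisted Seiberg--Witten Floer cohomology of $Y_\phi$ in the non-torsion $\Spinc$-structure determined by $\Gamma$, and deduce non-nilpotence of $U$ from the known structure of monopole Floer homology of torus bundles; alternatively one can exploit that for a hyperbolic $\phi$ all periodic orbits are nondegenerate hyperbolic to make a direct holomorphic-curve count tractable. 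The hard part is this last step: the torus-symmetric case is a routine extension of Example~\ref{ex:US2}, but the remaining cases --- especially the hyperbolic one, where there is no symmetry to organize the holomorphic curves --- require either genuinely computing a (twisted) monopole Floer homology of a torus bundle together with its $U$-action, or facing the standard transversality and multiple-cover difficulties head-on. Note in particular that a mere non-vanishing result for the relevant Seiberg--Witten group (for instance via Reidemeister torsion and the Alexander polynomial $\det(tI-A)$) would not suffice: one needs enough control on the $U$-map to rule out nilpotency.
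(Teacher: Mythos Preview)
Your algebraic reduction in the first two paragraphs is correct and matches the paper: monotonicity holds (indeed $c_1(E)=0$ for a $T^2$-bundle), Lemma~\ref{lem:twisted} reduces the problem to finding a nonzero element of $\overline{HP}(\phi,\Gamma)$ fixed by some power of $U^{d}$, and by finite-dimensionality over $\F$ this amounts to $U$ being non-nilpotent on $\overline{HP}(\phi,\Gamma)$.

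The gap is in your third paragraph, and your final sentence is exactly backwards. The Lee--Taubes isomorphism \cite[Cor.~1.5]{lt} identifies $\overline{HP}(\phi,\Gamma)$, for $g>0$ and $d(\Gamma)>2g-2$, not with an arbitrary flavor of Seiberg--Witten Floer cohomology but with an instance of the \emph{bar} version. The bar version is by construction a module over $\F[U,U^{-1}]$, and the Lee--Taubes isomorphism intertwines the $U$-maps (as in \cite{taubes5}). Hence $U$ is automatically an \emph{isomorphism} on $\overline{HP}(\phi,\Gamma)$, uniformly in $\phi$ and $\Gamma$, with no case analysis, no Morse--Bott computation, no symmetry argument, and no direct holomorphic-curve counts needed. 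So a ``mere non-vanishing result'' is precisely what suffices and is precisely what the paper uses: the computations in \cite[\S35]{km} give $\overline{HP}(\phi,\Gamma)\neq 0$ with each graded piece of rank at most $2$, so $U^{d}$ is a permutation of a finite set and some power of it is the identity.

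Your proposed case-by-case computation for linear torus maps would, if completed, give an alternative proof, but you yourself flag the hyperbolic case as the hard part and do not resolve it; the paper's route bypasses all of this by exploiting the structural fact about the bar flavor that you overlooked.
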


\begin{proof}
Let $\phi$ be an area-preserving diffeomorphism of $(\Sigma,\omega)$, of arbitrary genus for now, and assume that $[\phi]$ is rational. Since the cohomology class $[\omega_\phi]\in H^2(Y_\phi;\R)$ is a real multiple of the image of an integral cohomology class $\Omega\in H^2(Y_\phi;\Z)$, we can find classes $\Gamma\in H_1(Y_\phi)$ with $d(\Gamma)$ arbitrarily large such that the pair $(\phi,\Gamma)$ is monotone as in Definition~\ref{def:monotone}. (Simply take $\Gamma$ to the Poincar\'e dual of $n\Omega - c_1(E)/2$ where $n$ is a large integer.)

For such a $\Gamma$, a result of Lee-Taubes \cite[Cor.\ 1.3]{lt}, tensored with $\Z/2$, asserts that if $g>0$ and $d(\Gamma)>2g-2$, then we have the following variant of the isomorphism \eqref{eqn:leetaubes}:
\begin{equation}
\label{eqn:leetaubes2}
\overline{HP}(\phi,\Gamma) \simeq \overline{HM}^{-*}(Y_\phi,\frak{s}_\Gamma,c_b;\Z/2).
\end{equation}
Here the left hand side is the untwisted PFH from \S\ref{sec:monotone}. The right hand side is an instance of the ``bar'' version of Seiberg-Witten Floer cohomology, with the ``balanced'' perturbation, defined by Kronheimer-Mrowka \cite[\S30]{km}. As with \eqref{eqn:leetaubes}, the isomorphism \eqref{eqn:leetaubes2} intertwines the $U$ maps on both sides, as discussed in \S\ref{sec:Umap}.

If $\Sigma=T^2$, then for $\Gamma$ as above, by computations in \cite[\S35.3]{km} (see the remark after \cite[Cor.\ 1.3]{lt}), we always have $\overline{HP}(\phi,\Gamma)\neq 0$, with the graded pieces $\overline{HP}_i(\phi,\gamma)$ having rank $\le 2$, on which $U$ acts as an isomorphism. In particular, $U^{d(\Gamma)}$ is a permutation of the set of nonzero elements in each graded piece $\overline{HP}_i(\Phi,\Gamma)$. Then there is a positive integer $m\le 6$ such that $U^{md(\Gamma)}$ is the identity on all such groups $\overline{HP}(\phi,\Gamma)$. We are now done by Lemma~\ref{lem:twisted}.
\end{proof}

\begin{remark}
\label{rem:upgrade}
The paper \cite{ucyclic}, which appeared after the original version of this paper, studies $U$-cyclic elements in more detail and generality. In particular, the proof of \cite[Thm.\ 1]{ucyclic} shows that if $R$ is any coefficient ring then
\[
(1-U^{d(\Gamma)-g+1})^{b_1(Y_\phi)+1}\overline{HM}^{-*}(Y_\phi,\frak{s}_\Gamma,c_b;R)=0.
\]
In our case where $R=\Z/2$, it follows that $U^{m(d(\Gamma)-g+1)}$ equals the identity, where $m=b_1(Y_\phi)+1$ when $b_1(Y_\phi)$ is odd, and $m=b_1(Y_\phi)+2$ when $b_1(Y_\phi)$ is even. In addition, it was shown in \cite{rohil}, which appeared simultaneously with the original version of this paper, that
\[
\overline{HM}^{-*}(Y_\phi,\frak{s}_\Gamma,c_b;R)\neq 0.
\]
As a result, Lemma~\ref{lem:Ucycle} can be upgraded to assert the following:  Let $\phi$ be an area-preserving diffeomorphism of $\Sigma$, and suppose that $[\omega_\phi]$ is a positive multiple of the image of $\Omega\in H^2(Y_\phi;\Z)$. If $\Gamma$ is a positive integer multiple of $\op{PD}(\Omega)$, then $HP(\phi,\Gamma,\Ker([\omega_\phi]))\neq 0$, and every nonzero element of the latter is $U$-cyclic of order $\le m$.
\end{remark}

To prove Theorems~\ref{thm:quant0} and \ref{thm:quant1}, we first prove a more general statement:

\begin{theorem}
\label{thm:quantitative}
Let $\phi$ be an area-preserving diffeomorphism of $(\Sigma,\omega)$ such that the Hamiltonian isotopy class $[\phi]$ is rational. Suppose that there exists a positive integer $d_0$ such that $\phi$ has $U$-cyclic elements of degree $d$ whenever $d$ is a positive multiple of $d_0$ with $d>g$. Let $\mc{U}\subset\Sigma$ be a nonempty open set and write $A=\int_\Sigma\omega$. Let $H$ be a $(\mc{U},a,l)$-admissible Hamiltonian. If $\delta l\le a$, then for some $\tau\in [0,\delta]$, the map $\phi_{\tau H}$ has a periodic orbit intersecting $\mc{U}$ with period at most $d_0k$, where
\begin{equation}
\label{eqn:quantitative}
k=\floor{\frac{A\delta^{-1}l^{-1}+g-1}{d_0}}+1.
\end{equation}
\end{theorem}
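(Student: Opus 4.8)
The plan is to deduce this directly from the spectral-gap machinery of \S\ref{sec:gap}; the only real content is choosing the degree $d$ correctly. First I would set $k$ as in \eqref{eqn:quantitative} and take $d = d_0 k$, so that $d$ is automatically a positive multiple of $d_0$. Since $\floor{x}+1 > x$ for $x = (A\delta^{-1}l^{-1}+g-1)/d_0$, this gives $d > A\delta^{-1}l^{-1}+g-1$, equivalently $A/(d-g+1) < l\delta$. I would also note that Definition~\ref{def:ual} forces $\delta l \le a < \op{area}(\mc{U}) \le A$, so $A\delta^{-1}l^{-1} > 1$; together with the previous inequality this yields both $d > g$ and $A/(d-g+1) < l\delta \le a$, and it also shows $k \ge 1$, so $d$ is genuinely positive.

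Next, the hypothesis on $\phi$ supplies $U$-cyclic elements of degree $d$ for this particular $d$ (legitimate precisely because $d$ is a positive multiple of $d_0$ with $d>g$): a reference cycle $\gamma$ with $d(\gamma)=d$, a subgroup $G \subset \Ker([\omega_\phi])$, and a $U$-cyclic $\sigma \in HP(\phi,\gamma,G)$. Feeding this into Lemma~\ref{lem:gapbound} gives
\[
\op{gap}_d(\phi) \;\le\; \frac{A}{d-g+1} \;<\; l\delta \;\le\; a .
\]
Now $[\phi]$ is rational, $H$ is $(\mc{U},a,l)$-admissible, $d>g$, and $\op{gap}_d(\phi)<a$, so Proposition~\ref{prop:gap} produces some $\tau \in [0, l^{-1}\op{gap}_d(\phi)]$ for which $\phi_{\tau H}$ has a periodic orbit intersecting $\mc{U}$ of period at most $d = d_0 k$. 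Because $l^{-1}\op{gap}_d(\phi) < l^{-1}\cdot l\delta = \delta$, this $\tau$ lies in $[0,\delta]$, which is exactly what is claimed.

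Theorems~\ref{thm:quant0} and \ref{thm:quant1} then follow by specialization: on $S^2$ one uses Example~\ref{ex:US3} with $g=0$ and $d_0=1$, and on $T^2$ one uses Lemma~\ref{lem:Ucycle} with $g=0$ and $d_0=\langle\Omega,[T^2]\rangle$, after checking that the resulting value of $k$ reproduces the bound \eqref{eqn:quant0} (using $\floor{x-1}=\floor{x}-1$ in the $S^2$ case) and the bound in Theorem~\ref{thm:quant1}.

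The argument involves no analysis beyond what is already established, so I expect no serious obstacle; the one point requiring care is the bookkeeping in the first step, namely verifying at once that the chosen $d$ is a valid degree for the $U$-cyclicity hypothesis (a positive multiple of $d_0$, strictly larger than $g$) and that the gap bound $A/(d-g+1)$ comes out strictly below $\min(a, l\delta)$. In particular one must notice that it is exactly the admissibility hypothesis $\delta l \le a$, combined with $a < \op{area}(\mc{U}) \le A$, that forces $A\delta^{-1}l^{-1}>1$ and hence excludes the borderline case $d=g$.
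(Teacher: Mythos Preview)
Your proof of Theorem~\ref{thm:quantitative} is correct and follows essentially the same argument as the paper's: set $d=d_0k$, use $\floor{x}+1>x$ to get $d>A\delta^{-1}l^{-1}+g-1$, verify $d>g$ via $\delta l\le a<A$, apply Lemma~\ref{lem:gapbound} to bound $\op{gap}_d(\phi)$, and invoke Proposition~\ref{prop:gap}. One slip in your closing remarks: for $T^2$ the genus is $g=1$, not $g=0$; with $g=1$ the formula \eqref{eqn:quantitative} gives $k=\floor{Ad_0^{-1}l^{-1}\delta^{-1}}+1$ directly, matching Theorem~\ref{thm:quant1}.
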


\begin{proof}
Write $d=kd_0$. It follows from \eqref{eqn:quantitative} that
\begin{equation}
\label{eqn:smallestinteger}
d > A\delta^{-1}l^{-1}+g-1.
\end{equation}
In particular, it follows from \eqref{eqn:smallestinteger} that $d>g$, since $\delta l \le a < A$.
Then by Lemma~\ref{lem:gapbound}, we have
\[
\op{gap}_{d}(\phi) \le \frac{A}{d-g+1}.
\]
Also, it follows from the above inequalities that $\op{gap}_{d}(\phi) < \delta l \le a$. It then follows from Proposition~\ref{prop:gap}  that for some $\tau\in[0,l^{-1}\op{gap}_{d}(\phi)]\subset [0,\delta]$, the map $\phi_{\tau H}$ has a periodic orbit intersecting $\mc{U}$ with period at most $d$.
\end{proof}

\begin{proof}[Proof of Theorem~\ref{thm:quant0}.]
If $\Sigma=S^2$, then by Example~\ref{ex:US3}, $\phi$ has $U$-cyclic elements of degree $d$ for all positive integers $d$. Thus Theorem~\ref{thm:quantitative} applies with $d_0=1$ to give the result.
\end{proof}

\begin{proof}[Proof of Theorem~\ref{thm:quant1}.]
This follows from Theorem~\ref{thm:quantitative} and Lemma~\ref{lem:Ucycle}.
\end{proof}


\section{Asymptotics of PFH spectral invariants}
\label{sec:weyl}

To conclude, we now prove the following ``Weyl law'' for PFH spectral invariants. Under certain hypotheses, it describes how the difference in spectral invariants $c_\sigma(\phi,\gamma,H_2)-c_\sigma(\phi,\gamma,H_1)$ behaves as the degree $d(\gamma)\to\infty$.

\begin{theorem}
\label{thm:weyl}
Let $\phi$ be a (possibly degenerate) area-preserving diffeomorphism of $(\Sigma,\omega)$. Let $\{G_i\}_{i\ge 1}$ be a sequence of subgroups of $\Ker([\omega_\phi])$, let $\{\gamma_i\}_{i\ge 1}$ be a sequence of reference cycles for $\phi$, and for each $i\ge 1$, let $\sigma_i\in HP(\phi,\gamma_i,G_i)$ be a nonzero class. Assume that:
\begin{itemize}
\item $\lim_{i\to\infty}d(\gamma_i)=\infty$.
\item There is a positive integer $m$ such that each class $\sigma_i$ is $U$-cyclic of order $\le m$.
\end{itemize}
Let $H_1,H_2:Y_\phi\to\R$. Write $A=\int_\Sigma\omega$. Then
\[
\boxed{
\lim_{i\to\infty}\frac{c_{\sigma_i}(\phi,\gamma_i,H_2) - c_{\sigma_i}(\phi,\gamma_i,H_1) + \int_{\gamma_i} (H_2-H_1)dt}{d(\gamma_i)} = A^{-1} \int_{Y_\phi}(H_2-H_1)\omega_\phi\wedge dt.
}
\]
\end{theorem}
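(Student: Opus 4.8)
The plan is to deduce the equality from two one-sided bounds, a $\liminf\ge$ and a $\limsup\le$, each obtained by symplectically packing disjoint balls into an appropriately chosen cobordism of the form \eqref{eqn:M} and applying the ball packing Lemma~\ref{lem:key} together with the volume asymptotics \eqref{eqn:echvol}. Abbreviate $d_i=d(\gamma_i)$ and write
\[
\mathcal{E}_i = c_{\sigma_i}(\phi,\gamma_i,H_2) - c_{\sigma_i}(\phi,\gamma_i,H_1) + \int_{\gamma_i}(H_2-H_1)\,dt,
\]
so the claim is that $\mathcal{E}_i/d_i\to A^{-1}\int_{Y_\phi}(H_2-H_1)\,\omega_\phi\wedge dt$. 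First I would record three elementary facts. (i) For a constant $C$, the objects $\phi_{H+C},\gamma_{H+C},\sigma_{H+C}$ coincide with $\phi_H,\gamma_H,\sigma_H$, so $c_\sigma(\phi,\gamma,H+C)=c_\sigma(\phi,\gamma,H)$; consequently replacing $H_2$ by $H_2+C$ changes $\mathcal{E}_i$ by $Cd_i$ and the right side by $C$, so we may assume $H_1<H_2$ pointwise (take $C$ large). (ii) By the $U$-cyclic hypothesis, for each $i$ there is $m_i\le m$ with $U^{m_i(d_i-g+1)}\sigma_i=q^{-m_i[\Sigma]}\sigma_i$, hence $U^{nm_i(d_i-g+1)}\sigma_i=q^{-nm_i[\Sigma]}\sigma_i\ne 0$ for every $n\ge1$; combining Proposition~\ref{prop:invariance}(b) (the $U$ map intertwines the canonical isomorphisms), Proposition~\ref{prop:spectralproperties}(a), and $\langle[\omega_\phi],[\Sigma]\rangle=A$ gives $c_{U^{nm_i(d_i-g+1)}\sigma_i}(\phi,\gamma_i,H)=c_{\sigma_i}(\phi,\gamma_i,H)-nm_iA$ for every $H$. (iii) For the cobordism $M$ of \eqref{eqn:M} between the graphs of $H_1$ and $H_2$ one has $\tfrac12\omega_M^2=ds\wedge dt\wedge\omega_\phi$, so $\op{vol}(M)=\int_{Y_\phi}(H_2-H_1)\,\omega_\phi\wedge dt$, and more generally $\int_{Y_\phi}\omega_\phi\wedge dt=A$.

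For the lower bound, fix $\epsilon>0$ and, using the (soft but nontrivial) fact that any symplectic four-manifold admits symplectic embeddings of disjoint unions of balls with total volume arbitrarily close to its own, choose such an $X$ symplectically embedded in $M$ with $\op{vol}(X)>\op{vol}(M)-\epsilon$. Applying Lemma~\ref{lem:key} with $k=nm_i(d_i-g+1)$ and using fact (ii) yields $\mathcal{E}_i\ge c_k^{\op{ECH}}(X)-nm_iA$ for all $n\ge1$. Since $c_k^{\op{ECH}}(X)=2\sqrt{k\,\op{vol}(X)}\,(1+o(1))$ by \eqref{eqn:echvol}, optimizing over $n$ — the optimum is at $n$ of order $d_i$, and rounding it to an integer together with the $o(\sqrt k)$ error costs only $o(d_i)$ because $k$ is of order $d_i^2$ — gives $\mathcal{E}_i\ge (d_i-g+1)\op{vol}(X)/A-o(d_i)$. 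Hence $\liminf_i\mathcal{E}_i/d_i\ge\op{vol}(X)/A>(\op{vol}(M)-\epsilon)/A$, and $\epsilon\to0$ gives $\liminf_i\mathcal{E}_i/d_i\ge A^{-1}\op{vol}(M)$, which is the desired limit.

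For the upper bound — where there is no cobordism going ``the other way'' — I would instead apply the same Lemma~\ref{lem:key} to a large‑constant‑shifted reversed cobordism. Fix a constant $C>\max_{Y_\phi}(H_2-H_1)$, so $H_2<H_1+C$, and let $M'$ be the cobordism \eqref{eqn:M} associated to the pair $H_2<H_1+C$, which by fact (iii) has $\op{vol}(M')=\int_{Y_\phi}(H_1+C-H_2)\,\omega_\phi\wedge dt=CA-\op{vol}(M)$. Packing balls $X'$ into $M'$ with $\op{vol}(X')>\op{vol}(M')-\epsilon$ and applying Lemma~\ref{lem:key} to $M'$ (with the roles of $H_1,H_2$ in the lemma played by $H_2$ and $H_1+C$), together with facts (i) and (ii), gives $\mathcal{E}_i\le Cd_i+nm_iA-c_k^{\op{ECH}}(X')$, where the two $\int_{\gamma_i}$ terms combine exactly into $Cd_i$. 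Minimizing over $n$ as before yields $\mathcal{E}_i\le Cd_i-(d_i-g+1)\op{vol}(X')/A+o(d_i)$, so $\limsup_i\mathcal{E}_i/d_i\le C-\op{vol}(X')/A$, and $\epsilon\to0$ gives $\limsup_i\mathcal{E}_i/d_i\le C-A^{-1}\op{vol}(M')=C-A^{-1}(CA-\op{vol}(M))=A^{-1}\op{vol}(M)$: the auxiliary constant cancels. Combined with the lower bound this proves the theorem.

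I expect the conceptual crux to be exactly the upper bound: it is not obvious a priori that Lemma~\ref{lem:key}, which bounds spectral invariants in only one direction, can yield a matching upper bound, and the trick is to run it on the shifted reversed cobordism $M'$ and watch the extraneous constant $C$ drop out at the end. The remaining points requiring care are mostly routine: citing (rather than reproving) the asymptotically full‑volume ball packing of $M$ and $M'$; checking that the canonical isomorphisms of Proposition~\ref{prop:invariance} are $\Lambda_G$‑linear and commute with $U$ so that fact (ii) holds with exactly the constant $-nm_iA$ (and reducing to the nondegenerate case via Corollary~\ref{cor:continuity} where needed); and verifying that the errors in the $n$‑optimization are genuinely $o(d_i)$, which works because the relevant index $k$ grows like $d_i^2$ while the error in \eqref{eqn:echvol} is $o(\sqrt k)$.
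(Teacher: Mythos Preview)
Your proposal is correct and follows essentially the same route as the paper: obtain the $\liminf$ bound from Lemma~\ref{lem:key} with a ball packing of $M$, choosing $k=nm_i(d_i-g+1)$ and using the $U$-cyclic relation together with \eqref{eqn:echvol}, then get the matching $\limsup$ bound from the constant-shift symmetry. The only cosmetic difference is that the paper packages your ``shifted reversed cobordism'' step more abstractly: it first notes that both sides of the $\liminf$ inequality shift by the same amount under adding a constant to $H_1$ or $H_2$, hence the inequality holds without the assumption $H_1<H_2$, and then simply swaps $H_1\leftrightarrow H_2$; this is exactly your argument with the auxiliary constant $C$ suppressed.
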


\begin{remark}
If $\phi$ is rational, then Theorem~\ref{thm:weyl} is not vacuous. In this case, as explained in Remark~\ref{rem:upgrade}, one can find a sequence of nonzero classes $\sigma_i\in HP(\phi,\gamma_i,\op{Ker}([\omega_\phi]))$ with $d(\gamma_i)\to\infty$, and each $\sigma_i$ is automatically $U$-cyclic of order $\le b_1(Y_\phi)+2$.
\end{remark}

\begin{example}
\label{ex:US4}
Let $D$ be a disk with a symplectic form $\omega$ of area $1$, and let $\phi$ be the time $1$ map of a Hamiltonian $H:[0,1]\times D\to\R$ which vanishes on $[0,1]\times\{x\}$ when $x$ is near $\partial D$.  Then $\phi$ defines an area-preserving diffeomorphism of $S^2$, with a symplectic form of area $1$, which is the identity on an open set. Recall from Example~\ref{ex:US2} that if $\gamma=d[S^1]\times\{x\}$, where $x$ corresponds to a point on $\partial D$, then $HP(\phi,\gamma,\{0\})$ is the free $\Lambda$-module generated by classes $e_{d,0},\ldots,e_{d,d}$; and each of these classes is $U$-cyclic of order $1$. Note that if $\phi$ is the identity, then each spectral invariant $c_{e_{d,i}}(\phi,\gamma)=0$. It then follows from Theorem~\ref{thm:weyl} that in general, we have
\begin{equation}
\label{eqn:weyldisk}
\lim_{d\to\infty}\frac{c_{e_{d,0}}(\phi,d[S^1]\times\{x\})}{d} = \int_{[0,1]\times D}H\omega\wedge dt.
\end{equation}
Here the right hand side (up to constant factors depending on conventions) is the {\em Calabi invariant\/} of $\phi$; see e.g.\ \cite{gg}.

The special case of \eqref{eqn:weyldisk} where $\phi$ is a ``monotone twist'' was proved\footnote{The paper \cite{simple} writes a slightly different, but equivalent, version of \eqref{eqn:weyldisk}. That paper defines spectral invariants using the variant $\widetilde{HP}(\phi,\gamma,\{0\})$ from \S\ref{sec:monotone}, which is possible here since monotonicity holds. Our spectral invariant $c_{e_{d,i}}(\phi,d[S^1]\times \{x\})$ agrees with the spectral invariant denoted in \cite{simple} by $c_{d,2i-d}(\phi)$.} by direct calculation in \cite[Thm.\ 5.1]{simple}, and this result plays a key role in the proof of the simplicity conjecture. It is also noted in \cite[\S7.4]{simple} that \eqref{eqn:weyldisk} implies that the Calabi invariant extends to a homomorphism defined on the group of compactly supported ``hameomorphisms'' of the disk. The latter statement was subsequently proved using different methods in \cite[Thm.\ 1.4]{fiveauthors}.
\end{example}

\begin{proof}[Proof of Theorem~\ref{thm:weyl}.]
We use a ``ball packing'' argument similar to \cite[\S3.2]{vc}.

Suppose first that $H_1<H_2$. Let $X$ be a finite disjoint union of balls symplectically embedded in $M$ and write $V=\op{vol}(X)$.

We know that each $\sigma_i$ is $U$-cyclic of order $m_i$ where $m_i\le m$. By Proposition~\ref{prop:spectralproperties}(a), if $k_i=m_in_i(d(\gamma_i)-g+1)$ where $n_i$ is an integer, then we have
\[
c_{U^{k_i}\sigma_i}(\phi,\gamma_i,H_1) = c_{\sigma_i}(\phi,\gamma_i,H_1) -Am_in_i.
\]
Then Lemma~\ref{lem:key} gives
\begin{equation}
\label{eqn:fromkey}
c_{\sigma_i}(\phi,\gamma_i,H_2) - c_{\sigma_i}(\phi,\gamma_i,H_1) + \int_{\gamma_i}(H_2-H_1)dt \ge  c_{k_i}^{\op{Alt}}(X) - Am_in_i.
\end{equation}
Now choose
\[
n_i=\floor{\frac{d(\gamma_i)^2V}{m_iA^2(d(\gamma_i)-g+1)}}.
\]
Then it follows from \eqref{eqn:fromkey} that
\[
\begin{split}
\liminf_{i\to\infty}
\frac{c_{\sigma_i}(\phi,\gamma_i,H_2) - c_{\sigma_i}(\phi,\gamma_i,H_1) + \int_{\gamma_i}(H_2-H_1)dt}{d(\gamma_i)} &\ge \liminf_{i\to\infty}\frac{c_{k_i}^{\op{Alt}}(X) - Am_in_i}{d(\gamma_i)} \\
&= A^{-1}V
\end{split}
\]
Here in the second line we have used \eqref{eqn:echvol} and the hypothesis that $d(\gamma_i)\to\infty$.

Now we can choose $X$ to make $V$ arbitrarily close to
\[
\begin{split}
\op{vol}(M,\omega_M) &= \frac{1}{2}\int_M\omega_M\wedge\omega_M\\
&= \int_{Y_\phi}(H_2-H_1)\omega_\phi\wedge dt.
\end{split}
\]
Thus we obtain
\[
\liminf_{i\to\infty}\frac{c_{\sigma_i}(\phi,\gamma_i,H_2) - c_{\sigma_i}(\phi,\gamma_i,H_1) + \int_{\gamma_i} (H_2-H_1)dt}{d(\gamma_i)} \ge A^{-1} \int_{Y_\phi}(H_2-H_1)\omega_\phi\wedge dt.
\]

By Remark~\ref{rem:constantshift}, both sides of the above inequality change by the same amount if one adds a constant to $H_1$ or $H_2$. Thus the above inequality is true for any $H_1$ and $H_2$, without the hypothesis that $H_1<H_2$. In particular, the above inequality is true with $H_1$ and $H_2$ switched, which gives
\[
\limsup_{i\to\infty}\frac{c_{\sigma_i}(\phi,\gamma_i,H_2) - c_{\sigma_i}(\phi,\gamma_i,H_1) + \int_{\gamma_i} (H_2-H_1)dt}{d(\gamma_i)} \le A^{-1} \int_{Y_\phi}(H_2-H_1)\omega_\phi\wedge dt.
\]
The above two inequalities imply the theorem.
\end{proof}

\begin{remark}
By choosing the ball packings carefully, as in the proof of \cite[Thm.\ 1.1]{ruelle}, one can show that the rate of convergence in Theorem~\ref{thm:weyl} is $O(d(\gamma_i)^{-1/2})$.
\end{remark}


\end{document}